\newtheorem{lemma}[equation]{Lemma}
\newtheorem{theorem}[equation]{Theorem}
\newtheorem*{theorem*}{Theorem}
\newtheorem{proposition}[equation]{Proposition}
\newtheorem*{corollary*}{Corollary}
\theoremstyle{definition}
\newtheorem{definition}[equation]{Definition}
\newtheorem{remark}[equation]{Remark}
\newtheorem{example}[equation]{Example}
\newtheorem{notation}[equation]{Notation}
\newtheorem {empt}[equation]      {}
\numberwithin{equation}{subsection}
\newcommand{\fp}{{\mathfrak p}}
\newcommand{\cU}{{\mathcal U}}
\newcommand{\cP}{{\mathcal P}}
\newcommand{\cV}{{\mathcal V}}
\newcommand{\N}{\mathbb{N}}
\newcommand{\R}{\mathbb{R}}
\newcommand{\C}{\mathbb{C}}
\newcommand{\ZZ}{\mathbb{Z}}
\newcommand{\PP}{\mathbb{P}}
\newcommand{\V}{\mathbb{V}}
\newcommand{\U}{\mathbb{U}}
\newcommand{\Cone}{\mathsf{Cone}}
\newcommand{\Vect}{\mathsf{Vect}}
\newcommand{\Set} {\mathsf{Set}}
\newcommand{\sfC} {\mathsf{C}}
\newcommand{\Man} {\mathsf{Man}}
\newcommand{\Cat} {\mathsf{Cat}}
\newcommand{\Graph}{\mathsf{Graph}}
\newcommand{\FinGraph}{\mathsf{FinGraph}}
\newcommand{\FinTree}{\mathsf{FinTree}}
\newcommand{\Ctrl}{\mathsf{Ctrl}}
\newcommand{\Finset}{\mathsf{FinSet}}
\newcommand{\FinSet}{\mathsf{FinSet}}
\newcommand{\Euc}{\mathsf{Euc}}
\newcommand{\FinVect}{\mathsf{FinVect}}
\newcommand{\FV}{\mathsf{FV}}
\newcommand{\CT}{\mathsf{CT}}
\newcommand{\A}{\mathsf{A}}
\newcommand{\B}{\mathsf{B}}
\newcommand{\D}{\mathsf{D}}
\newcommand{\G}{\mathsf{G}}
\newcommand{\X}{\mathsf{X}}
\newcommand{\Y}{\mathsf{Y}}
\newcommand{\Z}{\mathsf{Z}}
\newcommand{\inv}{^{-1}}
\newcommand{\toto}{\rightrightarrows}
\newcommand{\Hom}{\mathrm{Hom}}
\newcommand{\Aut}{\mathrm{Aut}}
\newcommand{\op}[1]{{#1}^{\mbox{\sf{\tiny{op}}}}}
\newcommand{\et}[1]{{#1}_{\mbox{\sf{\tiny{et}}}}}
\newcommand{\opet}[1]{{{#1}_{\mbox{\sf{\tiny{et}}}}}^{\mbox{\sf{\tiny{op}}}}}
\DeclareMathOperator{\lv}{lv}
\DeclareMathOperator{\rt}{rt}
\newcommand{\an}{\mathtt{an}}
\newcommand{\col}{\colon}
\begin{document}
\title[Dynamics on Networks I.]{Dynamics on networks I.
  Combinatorial categories of modular continuous-time systems}

\author{R. E. Lee DeVille and Eugene Lerman}
\thanks{Supported in part
   by NSF grants} 

\address{Department of
  Mathematics, University of Illinois, Urbana, IL 61801, USA}

\begin{abstract}
  We develop a new framework for the study of complex continuous time
  dynamical systems based on viewing them as collections of
  interacting control modules.  This framework is inspired by and
  builds upon the groupoid formalism of Golubitsky, Stewart and their
  collaborators.  Our approach uses the tools and---more
  importantly---the stance of category theory.  This enables us to put
  the groupoid formalism in a coordinate-free setting and to extend it
  from ordinary differential equations to vector fields on manifolds.
  In particular, we construct combinatorial models for categories of
  modular continuous time dynamical systems.  Each such model, as a
  category, is a fibration over an appropriate category of labeled
  directed graphs.  This makes precise the relation between dynamical
  systems living on networks and the combinatorial structure of the
  underlying directed graphs, allowing us to exploit the relation in
  new and interesting ways.
\end{abstract}
\maketitle

\tableofcontents

\section{Introduction}
\subsection{Background}
The goal of this paper is to develop a new framework for dynamics on
networks using the techniques and methods of category
theory. Networks, and the dynamical systems defined on them, are
ubiquitous in science, engineering and the social sciences;
understanding dynamics on networks constitutes a major scientific
challenge with applications across a variety of fields.  For example,
the mathematical study of dynamics on networks plays an important role
in the study and design of communications
networks~\cite{Rogers.Kincaid.book}; in cognitive science,
computational neuroscience, and robotics (see, for example
\cite{Knight.72, EK84, Kuramoto.91, Kuramoto.book, MacKay.book,
  Dayan.Abbott.book}); in the study of gene regulatory
networks~\cite{Bower.Bolouri.book, Shmulevich.etal.02,
  Alvarez-Buylla.etal.07} and more general complex biochemical
networks~\cite{Wilkinson.book}; and finally in complex active
media~\cite{Peskin.75, Tyson.Keener.88, Kapral.Showalter.book,
  Ermentrout.Rinzel.96, Keener.Sneyd.book}.  Current approaches to
understanding dynamics on networks include ideas from statistical
physics and random graph theory (see~\cite{Watts.Strogatz.98,
  Barabasi.Albert.99, Strogatz.01, Schwartz.etal.02,
  Albert.Barabasi.02, Barabasi.03, Barabasi.etal.04,
  Boccaletti.etal.06} and the many references therein).

We propose to develop a new approach to this problem which is inspired
by and builds upon the {\sf groupoid formalism} of Golubitsky, Stewart
and their collaborators~\cite{Golubitsky.Stewart.84,
  Golubitsky.Stewart.85, Golubitsky.Stewart.86,
  Golubitsky.Stewart.86.2, Golubitsky.Stewart.87,
  Golubitsky.Stewart.Schaeffer.book, Field.Golubitsky.Stewart.91,
  Golubitsky.Stewart.Dionne.94,
  Dellnitz.Golubitsky.Hohmann.Stewart.95,
  Dionne.Golubitsky.Silber.Stewart.95, Dionne.Golubitsky.Stewart.96.1,
  Dionne.Golubitsky.Stewart.96.2, Golubitsky.Stewart.Buono.Collins.98,
  Golubitsky.Stewart.98, Golubitsky.Stewart.99, Golubitsky.Stewart.00,
  Golubitsky.Knobloch.Stewart.00, Golubitsky.Stewart.02,
  Golubitsky.Stewart.02.2, Stewart.Golubitsky.Pivato.03,
  Golubitsky.Nicol.Stewart.04, Golubitsky.Pivato.Stewart.04,
  Golubitsky.Stewart.05, Golubitsky.Stewart.Torok.05,
  Golubitsky.Stewart.06, Golubitsky.Josic.Brown.06,
  Golubitsky.Shiau.Stewart.07} and references therein).  The key idea
of the groupoid formalism is this: many networks are {\sf modular},
and the modes of interaction between pieces of the network are {\sf
  repeated} across the network.  This repetition is a symmetry in a
very broad sense of the word.  In the case of network dynamics modeled
by ordinary differential equations (ODEs), Golubitsky et al.\ found a
mathematical formulation of this symmetry as a groupoid symmetry of
the governing equations.  We follow their lead and develop a new
framework for the groupoid formalism using the tools and, perhaps more
pertinently, the stance of category theory
\cite{MacLane}.\footnote{The elements of category theory that are used
  in this paper are reviewed in Appendix~\ref{sec:category}.}

In this first paper in a series we put the groupoid formalism of
Golubitsky et al.\ in a coordinate-free framework and then extend it
from ordinary differential equations to vector fields on manifolds.
In particular {\sf we construct combinatorial models for categories of
  modular continuous-time dynamical systems.}  Discrete time systems,
groupoid-compatible numerical methods, and stochastic dynamical
systems defined on networks will all be taken up in subsequent
papers~\cite{DeVille.Lerman.10.2,DeVille.Lerman.10.3}.

To explain what our work is  about we start with an example.
Consider an ODE in $({\mathbb R}^n)^3$ of the form
\begin{equation}\label{eq1.2.1}
  \dot x_1 = f(x_2),\quad   \dot x_2 = f(x_1), \quad  \dot x_3 = f(x_2)
\end{equation}
for some smooth function $f: {\mathbb R}^n\to {\mathbb R}^n$.  That
is, consider the flow of the vector field
\[
F: ({\mathbb R}^n)^3\to ({\mathbb R}^n)^3, \quad 
F(x_1, x_2, x_3) = (f(x_2), f(x_1), f(x_2)).
\]
It is easy to check that $F$ is tangent to the diagonal 
\[
{\mathbb R}^n \simeq  \Delta 
= \{(x_1, x_2, x_3) \in (\mathbb{R}^n)^3 \mid x_1 = x_2 = x_3\}
\]
and that the restriction of the flow of $F$ to $\Delta$
is the flow of the ODE
\[
\dot u = f(u).
\] 
One can also see another invariant submanifold of $F$:
\[
({\mathbb R}^n)^2 \simeq  \Delta' = \{(x_1, x_2, x_3) 
\in ({\mathbb R}^n)^3 \mid x_1 = x_3\}.
\]
On $\Delta'$ the flow of $F$ is the flow of the ODE
\[
\dot v_1 = f(v_2), \quad \dot v_2 = f(v_1).
\]
Moreover the projection 
\[
\pi: ({\mathbb R}^n)^3 \to \Delta', \quad \pi (x_1, x_2, x_3) = (x_1, x_2, x_1)
\]
intertwines the flows of $F$ on $({\mathbb R}^n)^3$ and on $\Delta'$.  
We have thus observed two subsystems of
$(({\mathbb R}^n)^3, F)$ and three maps between the three dynamical systems:
\begin{equation}\label{eq1.2.2}
\xy
(-40, 0)*+{(\Delta, F|_\Delta)}="1";
(0,0)*+{(({\mathbb R}^n)^3, F)}="2";
(10,1)*+{}="2+"; 
(10,-1)*+{}="2-"; 
(40, 0)*+{(\Delta', F|_{\Delta'})}="3";
(30, 1)*+{}="3+";
(30, -1)*+{}="3-";
 {\ar@{^{(}->} "1";"2"};
 {\ar@{_{(}->} "3+";"2+"};
 {\ar@{->}_{\pi} "2-";"3-"}; 
\endxy
\end{equation}
Where do these subsystems and maps come from?  There is no obvious
symmetry of $({\mathbb R}^n)^3$ that preserves the vector field $F$ and fixes
the diagonal $\Delta$ and thus could account for the existence of this
invariant submanifold.  Nor is there any $F$-preserving symmetry that
fixes $\Delta'$.  In fact the vector field $F$ doesn't seem to have
any symmetry.  The graph $\Gamma$ recording the interdependence of the
variables $(x_1, x_2, x_3)$ in the ODE \eqref{eq1.2.1} has three
vertices and three arrows:
\[
\xy
(-40,0)*+{\Gamma =}="n1";
(-15,0)*++[o][F]{1}="1"; 
(15,0)*++[o][F]{2}="2"; 
(45,0)*++[o][F]{3}="3"; 
{\ar@/_1.5pc/ "1";"2"};
{\ar@{->}_{} "2";"3"};
{\ar@/_1.5pc/ "2";"1"};
\endxy
\]
The graph has no non-trivial symmetries.  Nonetheless, the existence
of the subsystems $(\Delta, F|_\Delta)$, $(\Delta', F|_{\Delta'})$ and
the whole diagram of the dynamical systems \eqref{eq1.2.2} can be
deduced from certain properties of the graph $\Gamma$.  There are two
surjective maps of graphs:
\[
\varphi: \Gamma \to \xymatrix{ *+[o][F]{} \ar@(dr,ur)}
\]
and 
\[
\psi: \Gamma \to \xy
(-15,0)*++[o][F]{a}="1"; 
(15,0)*++[o][F]{b}="2"; 
{\ar@/_1.5pc/ "1";"2"};
{\ar@/_1.5pc/ "2";"1"};
\endxy,
\]
with $\psi $ defined on the vertices by $\psi(2) = b$, $\psi(1) = a =
\psi(3)$, and one embedding
\[
\tau: 
\xy 
(-10,0)*++[o][F]{a}="1"; 
(10,0)*++[o][F]{b}="2"; 
{\ar@/_1.2pc/ "1";"2"};
{\ar@/_1.2pc/ "2";"1"};
\endxy \quad
\hookrightarrow \quad 
\xy
(-10,0)*++[o][F]{1}="1"; 
(10,0)*++[o][F]{2}="2"; 
(30,0)*++[o][F]{3}="3"; 
{\ar@/_1.2pc/ "1";"2"};
{\ar@{->}_{} "2";"3"};
{\ar@/_1.2pc/ "2";"1"};
\endxy.
\]
We can collect all of these maps into one diagram
\begin{equation}\label{eq:Gdiag}
\xy
(-17, 0)*+{ \xymatrix{ *+[o][F]{} \ar@(dr,ur)}}="1";
(15,0)*+{ \xy
(-9,0)*++[o][F]{1}="1"; 
(9,0)*++[o][F]{2}="2"; 
(24,0)*++[o][F]{3}="3"; 
{\ar@/_1.2pc/ "1";"2"};
{\ar@{->}_{} "2";"3"};
{\ar@/_1.2pc/ "2";"1"};
\endxy}="2";
(68, 0)*+{\xy 
(-9,0)*++[o][F]{a}="1"; 
(9,0)*++[o][F]{b}="2"; 
{\ar@/_1.2pc/ "1";"2"};
{\ar@/_1.2pc/ "2";"1"};
\endxy }="3";
  {\ar@{->}_{\varphi} (-10,0); (-20, 0)};
 {\ar@{->}_{\tau} (50,1); (38,1)};
  {\ar@{->}_{\psi} (38,-1); (50, -1)}; 
\endxy
\end{equation}

A comparison of \eqref{eq1.2.2} and~\eqref{eq:Gdiag} evokes a  pattern:
for every map which intertwines dynamical systems in~\eqref{eq1.2.2},
there is a corresponding map of graphs in~\eqref{eq:Gdiag} with the
arrows reversed, and vice versa. \\

The same pattern holds when we replace the vector space $\mathbb{R}^n$
by an arbitrary manifold $M$.  Given a pair of manifolds $U$ and $N$, we
think of a map $X:U\times N\to TN$ with $X(u,n)\in T_nN$ as a control
system with the points of $U$ controlling the the dynamics on $N$.
Now consider a vector field 
\[
F:M^3 \to T(M^3) = TM\times TM \times TM
\]
of the form
\[
F(x_1, x_2, x_3) = (f(x_2, x_1), f(x_1, x_2), f(x_2, x_3))
\]
for some control system
\[
f: M\times M \to TM, \quad \textrm{with} \quad f(u,v)\in T_v M.
\]
Then once again the three maps of graphs in the
diagram~\eqref{eq:Gdiag} give rise to maps of dynamical systems
\begin{equation}\label{eq.EqDiag}
\xy
(-40, 0)*+{(\Delta_M, F|_{\Delta_M})}="1";
(0,0)*+{(M^3, F)}="2";
(10,1)*+{}="2+"; 
(10,-1)*+{}="2-"; 
(42, 0)*+{(\Delta'_M, F|_{\Delta'_M})}="3";
(30, 1)*+{}="3+";
(30, -1)*+{}="3-";
 {\ar@{^{(}->} "1";"2"};
 {\ar@{_{(}->} "3+";"2+"};
 {\ar@{->}_{\pi} "2-";"3-"}; 
\endxy
\end{equation}

\noindent
What accounts for the patterns we have seen? Notice that the dynamical
systems (\ref{eq.EqDiag}) are constructed out of {\em one} control
system $f:N\times N\to TN$.  At the same time, in each of the graphs
in (\ref{eq:Gdiag}), every vertex has exactly {\em one} incoming arc.
This is not a coincidence.  The rough idea for the technology which
generalizes this example is this: if we have a dynamical system made
up of repeated control system modules whose couplings are encoded in
graphs, then the appropriate maps of graphs lift to maps of dynamical
systems. Making this precise requires a number of constructions and
theorems; these make up the bulk of this paper.

The first construction that we need is that of a phase space functor
$\mathbb{P}$ that consistently assigns to the sets of vertices of our
graphs products of manifolds.  It turns out to be naturally contravariant:
\[
\mathbb{P}: {\mathsf{Graph}}^{op} \to \mathsf{Man}.
\]
Here $\mathsf{Graph}$ denotes the category of finite directed graphs
and $ \mathsf{Man}$ the category of finite dimensional smooth
manifolds.\\

Associated to each node $a$ of a directed graph $\Gamma$ there is a
set of edges of $\Gamma$ with target $a$.  These edges form the
``input tree'' $I(a)$ of $a$, which is itself a directed graph.  With the
help of the phase space functor $\mathbb{P}$, one can associate---to each
such input tree $I(a)$---a vector space $\mathsf{Ctrl}(I(a))$ of control
systems.  These control systems are easy to describe; namely, the manifold
assigned to the leaves (which is itself a product of the individual manifolds associated to each leaf) controls the dynamics on the manifold assigned
to the root.

The collection of input trees and their isomorphisms of a given graph
$\Gamma$ form a groupoid $G(\Gamma)$.  This groupoid acts on the
vector spaces of control systems attached to the input trees of the
graph. We thus have a groupoid representation
\[
\mathsf{Ctrl}_\Gamma\colon G(\Gamma) \to \mathsf{Vect},
\]
where $\mathsf{Vect}$ is the category of (not necessarily finite-dimensional) real vector spaces and linear maps.  It is natural to
think of the limit of the functor $\mathsf{Ctrl}$ as the space
$\mathbb{V}\Gamma$ of invariants of the representation.  We think of
$\mathbb{V}\Gamma$ as the collection of {\em virtual} groupoid-invariant
vector fields on the phase space $\mathbb{P}(\Gamma)$.  (We'll explain
the meaning of the word ``virtual'' shortly.)  It is not hard to check
that the graphs in (\ref{eq:Gdiag}) produce the dynamical systems in
(\ref{eq.EqDiag}).  The story with the maps is a bit more complicated.
\\

To extend the assignment $\Gamma\mapsto \mathbb{V}\Gamma$ to a functor
we need to restrict ourselves to those morphisms of graphs that preserve the
input trees.  We call such maps of graphs {\em
  \'etale}\footnote{Equivalently a map $\varphi:\Gamma\to \Gamma'$ of
  directed graphs is {\em \'etale} if for any vertex $a$ of $\Gamma$
  and any edge $e'$ of $\Gamma'$ ending at $\varphi(a)$ there is a
  unique edge $e$ of $\Gamma$ ending at $a$ with $\varphi (e) = e'$.
  \label{foot:etale}}. We have a subcategory $\et{\mathsf{Graph}}$ of
the category of directed graphs and a contravariant functor
\[
\mathbb{V}\colon\op{(\et{{\mathsf{Graph}}})}\to \mathsf{Vect}.
\]
which extends the assignment $\Gamma\to \mathbb{V}\Gamma$ to a contravariant functor on $\et{\Graph}$.

The elements of $\mathbb{V}(\Gamma)$ are not, strictly speaking, vector
fields.  But for each graph $\Gamma$ there is a linear map
\[
S= S_\Gamma\colon\mathbb{V}(\Gamma)\to \chi (\mathbb{P} (\Gamma)),
\]
where $\chi (\mathbb{P} (\Gamma))$ denotes the space of vector fields on
the manifold $\mathbb{P} (\Gamma)$. One may think of the image of $S$
as the space of $G(\Gamma)$ invariant vector fields on $\mathbb{P}
(\Gamma)$.  In general the map $S$ need not be injective.  We are now
in position to state the first result that explains the maps in
(\ref{eq.EqDiag}):
\\

\noindent {\bf Theorem.}\quad
The functor $\mathbb{V}$ and the collection of maps $\{S:
\mathbb{V}(\Gamma)\to \chi(\mathbb{P}(\Gamma))\}$ are compatible: for
  any virtual groupoid invariant vector field $w\in
  \mathbb{V}(\Gamma')$ and any \'etale map of graphs
  $\varphi:\Gamma\to \Gamma'$ we have a map of dynamical systems
\[
\mathbb{P}\varphi: (\mathbb{P}(\Gamma'), S(w))\to (\mathbb{P}(\Gamma),
S((\mathbb{V}(\varphi)\, w)).
\]
\mbox{}\\

In short, given any \'etale map between two colored graphs (e.g.~the graph maps given in~\eqref{eq:Gdiag}), there is a gadget  which intertwines all dynamical systems on the phase spaces associated to these graphs (the maps intertwining the dynamical systems in~\eqref{eq.EqDiag}).

Finally, the compatibility of $\mathbb{V}$ and $S$ can be expressed
succinctly as follows.  Let $\mathcal{V}$ denote the category of
elements of the functor $\mathbb{V}$.  The objects of $\mathcal{V}$
are pairs $(\Gamma, w)$ where $\Gamma$ is a graph and $w\in
\mathbb{V}\Gamma$ is a virtual groupoid invariant vector field.  We
have a fibration of categories $\pi: \mathcal{V}\to
\op{(\et{\Graph})}$. The assignment $S$ extends to a functor
$S: \mathcal{V} \to \mathsf{DynSys}$, and the diagram
\begin{equation}\label{eq:star}
\xymatrix{
{\mathcal V}  \ar[r]^{S \quad\quad } \ar[d]_{\pi}
&  \mathsf{DynSys} \ar[d]^{U}  \\
 \op{(\et{\Graph})}  \ar[r]^{\quad\mathbb{P} }
&  \mathsf{Man} }
\end{equation}
commutes.  The objects of the category $ \mathsf{DynSys}$ are pairs
$(M,X)$ consisting of a manifold $M$ and a vector field $X$ on $M$.
The functor $U$ is the functor that forgets the vector field: $U(M,X)
= M$.\\

One final comment: the story above is not complete, since it treats
all the nodes of the graphs, and all the arcs of the graphs, as being
the same.  In general one would want to consider the case where different vertices in the graph could correspond to different manifolds, and additionally, not all controls should be interchangeable.  This is
taken care of by passing to a relative version of the theory outlined
above.  More concretely, we fix a graph $C$ of ``colors'', choose an assignment
of (possibly) different manifolds to different nodes of $C$, and an assignment of (possibly) different control systems to different edges of $C$.  The
category $\et{\Graph}$ is then replaced by the slice category
$\et{(\mathsf{Graph}/C)}$.  We give a concrete example of the kind of dynamical system which requires such colors in Example~\ref{eg:color}.

\subsection{Structure of paper}

The paper is organized as follows.  In Section~\ref{sec:Euc} we
develop the relevant mathematics under the assumptions that all of the
manifolds are Euclidean (i.e. linear) spaces and the maps are smooth,
i.e., the setting we consider are modular ODEs defined on copies of $\R^n$.
This assumption allows for a direct comparison of our results with the
results of Golubitsky et al.~\cite{Golubitsky.Stewart.06}. It will also
be used in a subsequent paper on groupoid-compatible numerical
methods.  In Section~\ref{sec:FinVect} we specialize the results of
Section~\ref{sec:Euc} to the case where all the vector fields and maps
are linear.  We plan to use the results of this section in our
subsequent work on stochastic systems.  In Section~\ref{sec:Man} we
generalize the theory of Section~\ref{sec:Euc} to allow our phase
spaces to be manifolds.  In Section~\ref{sec:inv} we study groupoid
invariant vector fields in the case where the underlying graph has
nontrivial group symmetries and prove that the space of groupoid
invariant vector fields is contained in the space of group-invariant
vector fields.  Section~\ref{sec:2slice} contains several technical
results about categorical limits.  Finally in the Appendix we review
the minimal amount of category theory that we need in this paper.

\subsection*{Acknowledgments} We thank Charles Rezk, Bertrand Guillou,
and Matthew Ando for a number of useful conversations.

\section{Groupoid invariant vector fields on Euclidean spaces}\label{sec:Euc}
\subsection{Introduction}\mbox{}
\\[-8pt]

\noindent
In this section we present the theory sketched in the introduction in
the setting of ordinary differential equations, that is, of vector
fields on Euclidean spaces.  As we mentioned in the introduction, we
tackle Euclidean spaces first to ease a comparison of our results with
those of Golubitsky et al.~\cite{Golubitsky.Stewart.06}, and to allow
us to build a setting for developing groupoid-compatible numerical
methods among other reasons.

\begin{empt}\label{empt:euc}
  Any finite dimensional vector space $V$ has a canonical structure of
  a second countable Hausdorff manifold. We will refer to this
  manifold as the {\em Euclidean space} $V$. Consequently it makes
  sense to talk about smooth vector fields on $V$.  Moreover, there is
  a canonical trivialization of the tangent bundle $TV$ of $V$:
\[
V\times V \to TV, \quad (x, v)\mapsto \left.\frac{d}{dt}\right|_0 (x + tv),
\]
which allows us to identify the space of smooth maps $C^\infty (V,V)$
from $V$ to itself with the space $\chi (V)$ of vector fields on $V$.
Explicitly
\[
C^\infty(V,V) \ni f \mapsto X_f \in \chi(V), 
\quad X_f (v) = \left(v,  \left.\frac{d}{dt}\right|_0 (v + tf(v))\right).
\]
\end{empt}

If $V$ and $W$ are two {\em different} vector spaces, then a map from $V$ to
$W$ is not (cannot be naturally identified with) a vector field on
$W$, but it can be identified with a {\em control system} on $W$.

\begin{definition} A {\em control system} on a manifold $M$ is a pair
$(p\colon Q\to M, F\colon Q\to TM)$, where
\begin{enumerate}
\item $p\colon Q\to M$ is a surjective submersion and
\item $F\colon Q\to TM$ is a smooth map with $F(q) \in T_{p(q)} M$ for
  all $q\in Q$.
\end{enumerate}
(cf., for example, \cite{Pappas}). In particular, the following diagram 
$$\xy (-10, 6)*+{Q}
="1"; (6, 6)*+{TM} ="2"; (6,-10)*+{M}="3"; {\ar@{->}_{ p}
  "1";"3"}; {\ar@{->}^{F} "1";"2"}; {\ar@{->}^{\pi} "2";"3"}; \endxy
$$
commutes, where $\pi$ is the canonical projection from the tangent
bundle to its base.
\end{definition}

\begin{example}
  Let $Q=U\times M$ for some manifold $U$, and $p$ be the projection
  onto the second factor.  In this case, the control system consists
  of the {\em phase space} $M$ and the {\em controls} $U$; the
  function $F:Q=U\times M\to TM$ defines a nonautonomous ODE of the
  form
\begin{equation*}
  \dot x = F(u,x),\quad(u,x)\in U\times M.
\end{equation*}
\end{example}

\begin{remark}\label{remark:vec-ctrl}
  Any vector field $X\colon M\to TM$ is a control system $(id\colon
  M\to M, X\colon M\to TM)$ --- the system with {\em trivial}
  controls.
\end{remark}
\begin{remark}\label{empt:CT}
 For a fixed surjective submersion $p:Q\to M$, the space
\[
\CT(p\colon Q\to M):= \{ F\colon Q\to TM\mid F(q) \in T_{p(q)}M \textrm{ for all }q\in M \}
\]
of all smooth {\em control systems supported on} $p$ is an infinite
dimensional vector space.
\end{remark}
\begin{notation}\label{empt:2.1.3}
We identify a smooth map  $f\colon V \to W$ between two Euclidean spaces  with the control system
$(pr_2\colon V\times W\to W, F\colon V\times W \to TW)$ where
\begin{enumerate}
\item $pr_2\colon  V\times W \to W$ is the projection on the second factor and
\item $F\colon V\times W\to W$ is defined by
\[
F(v,w) = (w, f(v)) \in T_{w} W \subset TW = W\times W.
\]
\end{enumerate}
In other words given two Euclidean spaces $V$ and $W$ we have a
canonical embedding
\begin{equation} \label{eq:2.1.4}
\Hom_\Euc(V, W) \hookrightarrow \CT(V\times W\stackrel{pr_2}{\to} W), \quad f\mapsto (pr_2, F)
\end{equation}
where, as above, $\CT(V\times W\stackrel{pr_2}{\to} W)$ denotes the
space of all control systems supported by the projection $pr_2\colon
V\times W\to W$, and $\Hom_\Euc(V, W) = C^\infty(V,W)$ is the space of
infinitely differentiable maps from $V$ to $W$.
\end{notation}

\begin{definition}[Maps of control systems] \label{def:Maps of control systems}
Let $\{ p_i\colon Q_i \to M_i, F_i\colon Q_i\to TM_i\}$, $i=1,2$, be a
pair of control systems.  A {\em morphism of control systems} from $\{
p_1\colon Q_1\to M_1, F_1\colon Q_1\to TM_1\}$ to $\{ p_2\colon Q_2\to
M_2, F_2\colon Q_2\to TM_2\}$ is a pair of smooth maps $\varphi\colon
Q_1\to Q_2$, $\phi\colon M_1\to M_2$ such that the two diagrams
\[
\xymatrix{ 
Q_1\ar[r]^{\varphi} \ar[d]^{p_1} & Q_2\ar[d]^{p_2}\\
M_1 \ar[r]_{\phi} & M_2
}
\quad \textrm{and} \quad 
\xymatrix{ 
Q_1\ar[r]^{\varphi} \ar[d]^{F_1} & Q_2\ar[d]^{F_2}\\
TM_1 \ar[r]_{d\phi} & TM_2
}
\]
commute.
\end{definition}

Note that if the two control systems in question are vector fields
$X_i\colon M_i \to TM_i$ (i.e.~have trivial controls), then the
morphism from the first to the second is a single map $\phi\colon
M_1\to M_2$ with
\[
d\phi \circ X_1 = X_2 \circ \phi.
\]
That is, $X_1$ and $X_2$ are $\phi$-related.

Now suppose we have a smooth map $(X_1, \cdots, X_n)\colon V_1\times
\ldots \times V_n \to V_1\times \ldots \times V_n$ from a product of
Euclidean spaces $V_1, \ldots, V_n$ to itself thought of as a vector
field.  Then for each index $j$ the pair
\[
(p_j\colon V_1\times \cdots \times V_n \to V_j,\quad X_j\colon V_1\times \ldots
\times V_n \to V_j)
\]
is a control system.  In other words the product $V_1\times \ldots
\times V_n$ controls the dynamics on each factor $V_j$ by way of the
components $X_j$ of the vector field $X$.  Alternatively we can view
$X=(X_1,\ldots X_n)$ as a collection of interacting control systems.

Suppose we know {\em a priori} that the dynamics on a factor $V_j$ is
controlled not by the full product $\prod V_i$ but by some subproduct
$Q_j:= V_{i_1}\times \cdots V_{i_k}$ (the number $k$ and the indices
$i_1, \ldots i_k$ depend on $j$). That is, suppose we can factor
$X_j\colon \prod V_i \to V_j$ through the natural projection $\prod
V_i \to V_{i_1}\times \cdots V_{i_k}$.  Then we can encode these
facts---literally, which submodule controls which---by way of a {\em
  directed graph} (q.v. \ref{def:directed graph}).  Given a collection
$X= (X_1, \ldots, X_n)$ of interacting control systems on $V_1\times
\cdots V_n$, we encode the dependence by a directed graph $\Gamma$ in
a natural way: the dynamics on the Euclidean space $V_j$ depends only
on the product $V_{i_1}\times \cdots V_{i_{k}}$ if and only if the
graph $\Gamma$ has arrows $i_1\to j, \dots, i_{k}\to j$.

Next suppose we know {\em a priori} that some control systems $X_j$
above are ``the same.''  That is, the ``control modules'' are repeated
through the system.
Suppose further that some of the factors $V_{i_\ell}$ in the
control/state Euclidean spaces $Q_j= V_{i_1}\times \cdots V_{i_k}$ are
repeated and that $X_j$'s are invariant under the permutation of the
repeated factors.  What structure (in addition to the directed graph
$\Gamma$) do we need to encode these assumptions? And what is the
resulting spaces of these interacting control systems on $\prod V_i$
with the ``symmetric modularity'' described above?  Our answer is a
reworking and a substantial extension of ideas of Golubitsky et
al.\ \cite{Golubitsky.Pivato.Stewart.04},
\cite{Golubitsky.Stewart.Torok.05}.

The construction we are about to present requires a number of
ingredients, which we develop throughout the rest of
Section~\ref{sec:Euc}.  The remainder of this section is broken up
into two themes: in Subsections~\ref{sec:colored}---\ref{sec:Sgamma}
we define the structure we study; in
Subsections~\ref{sec:gvg}---\ref{sec:VEuc} we state and prove the main
theorems justifying the construction and exhibiting the utility of
this formalism.  In more detail:
\begin{enumerate}
\item Construction of objects of study:
\begin{enumerate}
\item Colored graphs, Subsection~\ref{sec:colored};
\item Phase space functors $\PP$, Subsection~\ref{sec:PP};
\item The groupoid of colored trees $\FinTree/C$,
  Subsection~\ref{sec:coloredtree};
\item The control functor $\Ctrl\colon \FinTree/C\to \Vect$, Subsection~\ref{sec:controlfunctor};
\item Input trees of a colored graph $\Gamma$ forming a groupoid $G(\Gamma) \subset \FinTree/
C$, Subsection~\ref{sec:groupoid};
\item The space of groupoid-invariant vector fields $\V(\Gamma)$ as a limit of
$\Ctrl|_{G(\Gamma)}$, Subsection~\ref{sec:Vgamma};
\item The relation of $\V(\Gamma)$ to vector fields on the phase space
  $\PP\Gamma_0$, Subsection~\ref{sec:Sgamma}.
\end{enumerate}
\item Results:
\begin{enumerate}
\item Group-invariant versus groupoid-invariant vector fields$^*$
  (this section may be skipped on the first reading),
  Subsection~\ref{sec:gvg};
\item The assignment $\Gamma \mapsto \V(\Gamma)$ extends to a
  contravariant functor $\V\colon \op{(\FinGraph/C)_{et}}\to \Vect$,
  Subsection~\ref{sec:functor};
\item Virtual groupoid invariant vector fields and dynamics on
  Euclidean spaces, Subsection~\ref{sec:sync};
\item The category of elements of the functor $\V$ as a combinatorial
  category of groupoid invariant vector fields,
  Subsection~\ref{sec:VEuc}.
\end{enumerate}
\end{enumerate}
Finally, we end the section with an extended example in
Subsection~\ref{sec:extex}.  We now take them up one at a time.

\subsection{Colored graphs}\label{sec:colored}
\mbox{}
\\[-8pt]

\noindent
We start by fixing the definition of a directed graph and of morphisms
(maps) of directed graphs.
\begin{definition}\label{def:directed graph}
  A {\em directed graph} (or, in this manuscript, {\em graph})
  $\Gamma$ consists of two sets $\Gamma_1$ (of arrows, or edges),
  $\Gamma_0$ (of nodes, or vertices) and two maps $s,t\colon
  \Gamma_1 \to \Gamma_0$ (source, target):
\[
\Gamma = \{s,t\colon \Gamma_1 \to \Gamma_0\}.
\]
We allow the possibility of $\Gamma_1$ being empty. We do not assume
that $\Gamma_0$ is finite.  We write $\Gamma = \{\Gamma_1\toto\Gamma_0\}$.
A graph $\Gamma = \{\Gamma_1\toto\Gamma_0\}$ is {\em finite} if the
sets $\Gamma_0, \Gamma_1$ of its  nodes and edges are finite.  A {\em map of graphs} $\varphi\colon \Gamma\to \Gamma'$ is a pair of maps of sets
$\varphi_1\colon  \Gamma_1\to \Gamma' _1$ and $\varphi_0\colon  \Gamma_0\to
\Gamma'_0$ so that the diagram
\[
\xymatrix{
  \Gamma_1\ar[r]^{\varphi_1} \ar[d]_{(s,t)} & \Gamma'_1\ar[d]^{(s,t)}\\
  \Gamma_0\times \Gamma_0 \ar[r]_{(\varphi_0, \varphi_0)} & \Gamma'_0
  \times \Gamma'_0 } 
\] 
commutes.
\end{definition}

In order to keep track of various types of Euclidean spaces associated to
the nodes and of the types of interactions we want our graph $\Gamma$
``colored.''  We note that the ``colored'' graphs defined below are
not the standard colored graphs of the graph theory literature.
Rather they are a variant of the colored graphs of Golubitsky et
al.\ ({ op.\ cit.}) who define a {\em coloring} of a graph $\Gamma =
\{\Gamma_1 \toto \Gamma_0\}$ to be a pair of compatible equivalence relations on
the spaces $\Gamma_1$ of arrows and $\Gamma_0$ of nodes of the graph.
But an equivalence relation $\sim$ on a set $X$ is the same thing as
the quotient map $X\to X/\sim$.  We find thinking in terms of maps
rather than equivalence relations more natural, powerful and flexible.
In particular it allows us to think of all compatibly colored graphs
as a category.  Thus our definition is:

\begin{definition}\label{2.5}
  Fix a directed graph $C$ (``colors'').  A \emph{graph colored by
    $C$} is a map of graphs $\varphi\colon \Gamma \to C$.  A {\em map
    of colored graphs} $f\colon (\varphi\colon \Gamma\to C) \to
  (\varphi'\colon \Gamma'\to C)$ is a map of graphs $f\colon \Gamma
  \to \Gamma'$ with $\varphi'\circ f = \varphi$.  That is, we have a
  commuting triangle:
\[
\xy (-6, 6)*+{\Gamma}
="1"; (6, 6)*+{\Gamma'} ="2"; 
(0,-4)*+{C }="3"; 
{\ar@{->}_{ \varphi}  "1";"3"}; 
{\ar@{->}^{f} "1";"2"}; 
{\ar@{->}^{\varphi'} "2";"3"}; 
\endxy .
\]
\end{definition}
\begin{remark}\label{2.6}
  Given a node $x\in \Gamma$ we think of $\varphi(x) \in C_0$ as the
  color of $x$.  Given an edge $\gamma$ in $\Gamma$ we think of
  $\varphi(\gamma)$ as the color of $\gamma$.  Since $\varphi$ is a
  map of graphs, the colors of edges and nodes are automatically
  compatible (q.v. Definition~\ref{def:directed graph}).  Note that
  our colored graphs are colored graphs in the sense of Definition
  5.1(f) in~\cite{Golubitsky.Stewart.06}.
\end{remark}

\begin{remark} \label{rm:slice-cat} The category of graphs colored by
  a graph $C$ is the slice category $\Graph/C$ (see
  Definition~\ref{empt:slice-cat} in the Appendix).  In the discussion
  that follows the color graph $C$ is fixed.  We often write $\Gamma$
  for an element of the slice category $\Graph/C$ and $f\colon \Gamma
  \to \Gamma'$ for a morphism in $\Graph/C$ (with all the maps to $C$
  suppressed in the notation).
\end{remark}

\begin{example}\label{eg:color}
Imagine that we wanted to consider the vector field on $(\R^n)^4$ given by
\begin{equation*}
x_1' = f(x_2),\quad x_2' = g(x_1),\quad x_3' = f(x_2),\quad x_4' = g(x_3),
\end{equation*}
where $f,g\colon \R^n\to\R^n$ are smooth functions.  We want to encode the dependencies of different variables on one another, but also encode the fact that there are two types of control systems.  We then encode these in the graph 
$$ \xy
(-20,0)*++[o][F]{1}="1"; 
(0,0)*++[o][F]{2}="2"; 
(20,0)*++[o][F]{3}="3"; 
(40,0)*++[o][F]{4}="4"; 
{\ar@/_1.pc/ "1";"2"};
{\ar@{=>} "2";"1"};
{\ar@{=>} "2";"3"};
{\ar@{->} "3";"4"};
\endxy,$$
\end{example}
so that the double arrow corresponds to the $f$ control and the single arrow to the $g$ control.  One potentially misleading aspect of all of this is illustrated in the next example.  If we now consider the vector fields on $(\R^n)^4$ encoded by the graph
$$ \xy
(-20,0)*++[o][F]{1}="1"; 
(0,0)*++[o][F]{2}="2"; 
(20,0)*++[o][F]{3}="3"; 
(40,0)*++[o][F]{4}="4"; 
{\ar@/_1.pc/ "1";"2"}; 
{\ar@/_2.pc/ "1";"3"};
{\ar@{=>} "2";"1"};
{\ar@{=>} "2";"3"};
{\ar@{->} "3";"4"};
\endxy,$$
then this corresponds to vector fields of the form
\begin{equation*}
x_1' = f(x_2), \quad x_2' = g(x_1),\quad x_3' = h(x_1,x_2),\quad x_4' = g(x_3),
\end{equation*}
where we assume no relationship between the function $h$ and the functions $f,g$.  We find that this can be misleading, since intuitively the inputs to vertex 3 are given by two arrows which singly correspond to controls $f$ and $g$, that the control from two vertices should somehow be related to $f$ and $g$, but in the current formalism they are not.  (We will consider formalisms the context of stochastic  dynamical systems~\cite{DeVille.Lerman.10.3} where we assume some relationship between the controls encoded by collections of arrows and those encoded by the individual arrows in that collection, but not here.)

\subsection{Phase space functors}\label{sec:PP}
\mbox{}\\[-8pt]

\noindent
We need a consistent way of assigning phase spaces to nodes and to
collections of nodes of various graphs colored by a graph
$C=\{C_1\toto C_0\}$.  Thus we need a functor from ``colored sets'' to
Euclidean spaces.  We construct such a functor $\PP$ in two steps.
But first we need some notation.

\begin{notation}
The symbol $\Finset$ denotes the category of finite sets and maps of
finite sets.  The symbol $\Finset/C_0$ denotes the slice category (q.v.\ Definition \ref{empt:slice-cat})
whose objects are maps of sets $\alpha\colon X\to C_0$, where $C_0$ is a fixed not necessarily finite set, $X$ is
a finite set and the morphisms are commuting triangles $\xy (-6, 6)*+{X}
="1"; (6, 6)*+{Y} ="2"; (0,-4)*+{C_0 }="3"; {\ar@{->}_{ \alpha}
  "1";"3"}; {\ar@{->}^{f} "1";"2"}; {\ar@{->}^{\beta} "2";"3"}; \endxy
$.
\end{notation}

\begin{notation}
  The symbol $\Euc$ denotes the category of Euclidean spaces: the
  objects are finite dimensional vector spaces thought of as smooth
  manifolds (q.v. \ref{empt:euc}) and morphisms are infinitely
  differentiable ($C^\infty$) maps.
\end{notation}

\noindent{\bf Step 1.} \quad We {\em choose} a function $\cP\colon C_0
\to \Euc_0$ from the set of colors of nodes $C_0$ to the collection
$\Euc_0$ of objects of the category $\Euc$ of Euclidean spaces. If we
think of $C_0$ as a discrete category (q.v. Example \ref{ex:discrete
  cat}) then $\cP$ is a functor from $C_0$ to $\Euc$.  We refer to
$\cP$ as the {\em phase space function.}\\

\noindent{\bf Step 2.} We use our choice of $\cP\colon C_0 \to \Euc$ to define
a contravariant phase space functor
\begin{equation}\label{eq:ph-s-funct}
\PP\colon  (\Finset/C_0)^{op} \to \Euc 
\end{equation}
as follows.  On objects of  $\Finset/C_0$ we set
\[
\PP X \equiv \PP (X \stackrel{\alpha}{\to} C_0): = 
\prod_{x\in X } \cP(\alpha (x)),
\]
the {\em categorical product} of the family $\{\cP (\alpha(x))\}_{x\in
  X}$ of the Euclidean spaces (q.v.\ Definition~\ref{def:product}).
Note that since the set $X$ is finite, the product $\prod_{x\in X}
\cP(\alpha(x))$ is a finite dimensional Euclidean space and thus an
object of the category $\Euc$ of Euclidean spaces.  In particular if
$X=\{1,2, \ldots, n\}$, then 
\[
\PP X = \cP(\alpha(1))\times \cdots
\times \cP (\alpha(n)),
\]
the Cartesian product of $n$ Euclidean spaces.

Given a morphism 
\begin{equation}\label{eq:triangle}
\xy (-6, 6)*+{Y} ="1"; (6, 6)*+{X} ="2"; (0,-4)*+{C_0 }="3";
    {\ar@{->}^{ \alpha} "2";"3"}; {\ar@{->}^{f} "1";"2"};
    {\ar@{->}_{\beta} "1";"3"}; \endxy 
\end{equation}
in the slice category $\FinSet/C_0$, we construct $\PP f\colon \PP
X\to \PP Y$ using the universal property of products: let $p_x \colon
\PP(X) \to \cP (\alpha(x))$, $x\in X$, and $q_y\colon \PP (Y)\to
\cP(\beta (y))$, $y\in Y$, denote the canonical projections.  Then for
any $y\in Y$ we have a map $p_{f(y)}\colon \PP X \to \cP (\alpha
(f(y))) = \cP (\beta (y))$.  Therefore, by the universal property of
product $\PP Y$ (q.v. \ref{def:product}) there is a unique map $\PP
f\colon \PP X \to \PP Y$ making the diagram
\begin{equation}\label{eq:2.4.3}
\xy
(-18, 10)*+{ \PP X} ="1"; 
(18, 10)*+{ \PP Y} ="4"; 
(-18,-6)*+{\cP (\alpha (f(y)))}="2";
(18,-6)*+{\cP (\beta (y))} ="3";
{\ar@{->}_{ p_{f(y)}} "1";"2"};
{\ar@{->}^{q_{y}} "4";"3"};
{\ar@{-->}^{\PP f} "1";"4"};
{\ar@{=} "2";"3"};
\endxy
\end{equation}
commute.

\begin{example}\label{ex:2.3.6}
  Suppose $C_0$ is a the two element set $\{c_1, c_2\}$,
  $\cP(c_1)$ is a Euclidean space $V$, $\cP(c_2)$ is a Euclidean space
  $W$, $X= \{x_1, x_2\}$, $Y= \{y_1, y_2\}$, $\alpha (x_i) = c_i$,
  $i=1,2$, $\beta (y_i) = c_1$ for all $i$ and $f(y_1) = f(y_2) =
  x_1$.  Then
\[
\PP X = \cP (\alpha (x_1)) \times \cP (\alpha (x_2)) = V\times W,
\]
\[
\PP Y = \cP (\beta (y_1)) \times \cP (\beta (y_2)) = V\times V,
\]
$p_{f(y_1)} = p_{f(y_2)} = p_{x_1}$, which is the canonical projection
$V\times W\to V$, while $q_{y_1}\colon V\times V \to V$ is the projection on
the first factor and $q_{y_2}\colon V\times V\to V$ is the projection on the
second.  Consequently $\PP f\colon  V\times W \to V\times V$ is the unique
map with $q_{y_1}\circ \PP f (v,w) = v$ and $q_{y_2}\circ \PP f (v,w) = v$.  That is
\[
\PP f (v,w) = (v,v).
\]
\end{example}

\begin{example}\label{ex:2.3.7}
  More generally, suppose $C_0$ is the set of natural numbers $\N$,
  $X=\{1,\ldots, N\}$, $Y=\{1, \ldots, M\}$, and $\alpha:X \to C_0$,
  $\beta:Y\to C_0$ and $f:Y\to X$ are three maps with $\alpha \circ f=
  \beta$. Then for any sequence $\{\cP(i)\}_{i\in \N}$ of Euclidean
  spaces we have
\[
\PP Y = \cP(\beta(1))\times \cdots \times \cP (\beta (M)), \quad
\PP X =  \cP(\alpha(1))\times \cdots \times \cP (\alpha(N)),
\]
$q_j: \PP Y \to \cP(\beta (j))$ is given by 
\[
q_j (w_1,\ldots, w_M) = w_j,
\]
$p_i: \PP X \to \cP(\alpha(i)))$ is given by 
\[
p_i(v_1,\ldots, v_N) = v_i,
\]
and $\PP f: \PP X\to \PP Y$ is defined by \eqref{eq:2.4.3}, that is by
\[
q_j (\PP f\, (v)) = p_{f(j)} (v) = v_{f(j)}.
\]
Hence
\begin{equation}\label{eq:PPf}
  \PP f(v_1,\dots,v_N) = (v_{f(1)}, \dots, v_{f(N)}).  
\end{equation}
\end{example}
\begin{remark} \label{rmrk:2.11}
\begin{enumerate}
\item Suppose $X\stackrel{\alpha}{\to} C_0, Y \stackrel{\beta}{\to}
  C_0 \in \FinSet/C_0$ are singletons: $X = \{x\}$, $Y=\{y\}$.  Then
  any morphism $f\colon Y\to X$ in $\FinSet/C_0$ has to map the unique
  element $y$ of $Y$ to the unique element $x$ of $X$ and we must have
  $\alpha (f(y)) = \beta (y)$.  Then $\PP X = \cP (\alpha (x)) = \cP
  (\beta (y)) = \PP Y$ and $\PP f$ is forced to be the identity map
  $id_{\PP X}$.

\item If $X\in \FinSet/C_0$ is empty then $\PP X $ is the one point
  Euclidean space $\{0\}$ (q.v.\ \ref{ex:A35}).

\item It is not hard to check that if $X= Y$ and $f$ is the identity
  map $id_X$ then $\PP (id_X) = id_{\PP X}$ (note that if $f$ is
  $id_X$ then $\alpha=\beta$).  Also, using the universal property of
  categorical products it is easy to check that if
\[\xy
(-15, 6)*+{Z} ="1"; 
(0, 6)*+{Y} ="2";
(15,6)*+{X} = "3";
(0,-6)*+{C}="4";
{\ar@{->}^{ \alpha} "3";"4"};
{\ar@{->}^{g} "1";"2"};
{\ar@{->}^{\beta} "2";"4"};
{\ar@{->}^{f} "2";"3"};
{\ar@{->}_{\gamma} "1";"4"};
\endxy 
\]
is a pair of composable morphisms in $\FinSet/C_0$, then
\[
\PP (f\circ g) = \PP g \circ \PP f.
\]
In other words $\PP$ is a contravariant functor
\begin{equation}
\PP \colon \op{(\FinSet/C_0)} \to \Euc.
\end{equation}
\end{enumerate}
\end{remark}

\begin{remark}\label{rmrk:Pf-is-linear}
  It follows from Example~\ref{ex:2.3.7} (in particular from
  \eqref{eq:PPf}) that for any map $f:Y\to X$ of finite sets over
  $C_0$ the map
\[
\PP f: \PP X \to \PP Y
\]
of Euclidean spaces is actually {\em linear.}
\end{remark}

\begin{remark}\label{rmrk:constr-of-P}
   The only property of the category $\Euc$ of Euclidean spaces that
   we used in construction the functor $\PP$ is that $\Euc$ has finite
   products.  Therefore the same construction works for the categories
   $\FinVect$ of finite dimensional vector spaces and linear maps and
   $\Man$ of manifolds. Thus given a choice of a functor $\cP\colon
   C_0\to \FinVect$ (again we think of $C_0$ as a discrete category)
   we get a contravariant functor
\[
\PP\colon  \op{(\FinSet/C_0)} \to \FinVect;
\]
 given a choice of a functor $\cP\colon C_0\to \Man$ we get a a
 contravariant functor
\[
\PP\colon  \op{(\FinSet/C_0)} \to \Man.
\]
We will use these maps in Sections~\ref{sec:FinVect} and
\ref{sec:Man}, respectively.
\end{remark}

 \begin{notation}
   Fix a directed graph $C=\{C_1 \toto C_0\}$.  The symbol
   $\FinGraph/C$ denotes the full subcategory of the slice category
   $\Graph/C$ whose objects are $(f\colon \Gamma\to C)\in (\Graph/C)_0$ with
   $\Gamma$ a {\em finite} graph.
\end{notation}

\begin{remark}\label{rmrk:PonFinGraph}
  There is an evident forgetful functor 
\[
F\colon \FinGraph/C \to \FinSet/C_0, \quad F(\Gamma \to C):= (\Gamma_0\to C_0),
\]
which forgets the arrows of a graph $\Gamma \to C$.  Following $F$ by
the phase space functor $\PP$ gives us a contravariant functor
$\op{(\FinGraph/C)}\to \Euc$.  We will abuse the notation and denote
the composite $\PP \circ F$ by $\PP$.  Thus
\[
\PP (\Gamma\stackrel{h}{\longleftarrow} \Gamma') = 
\PP \Gamma \stackrel{\PP h}{\longrightarrow}\PP \Gamma'\equiv
\PP \Gamma_0 \stackrel{\PP h}{\longrightarrow}\PP \Gamma'_0
\]
for any morphism $h\colon \Gamma'\to \Gamma$ in $\FinGraph/C$.
\end{remark}

\subsection{The groupoid of finite colored trees $\FinTree/C$}\label{sec:coloredtree}
\mbox{}\\[-8pt]

\begin{definition}  A {\em path} in a directed graph $\Gamma$ is a sequence of edges
with the matching sources and targets:
\[
a_0\stackrel{e_1}{\leftarrow}a_1\stackrel{e_2}{\leftarrow}\cdots \cdots 
\stackrel{e_n}{\leftarrow} a_n.
\]
That is, $a_i\in \Gamma_0$ are vertices,
$a_{i-1}\stackrel{e_i}{\leftarrow} a_i\in \Gamma_1$ are edges and,
additionally, $s(e_i) = a_i = t(e_{i+1})$.  Paths of length zero are
by definition vertices of $\Gamma$.
\end{definition}

\begin{definition}[Finite tree]\label{def:finite-tree}
A finite directed graph $T$ is a {\em tree} if there is exactly one
path between any two distinct vertices of $T$. The root of a tree $T$
is the unique vertex with no outgoing edges.  A vertex of $T$ that has
no incoming edges and which is not a root is called a {\em leaf} of
$T$.  We denote the singleton set consisting of the root of $T$ by
$\rt T$. We denote the set of leaves of a tree $T$ by $\lv T$.
\end{definition}

\begin{remark}
  Note that by our definition a graph $T$ consisting of a single vertex
  $a$ and no edges (that is, $T= \{\emptyset\toto \{a\}\}$) is a tree.
  The root of this tree is the vertex $a$ and the set of leaves is
  empty.
\end{remark}

\begin{definition}[The category of finite colored trees] Fix a graph
  $C$ of colors.  The category $\FinTree/C$ of {\em finite colored
    trees over $C$} is defined by
\begin{align*}
  (\FinTree/C)_0 = \mbox{finite trees colored by $C$} =
\{T\to C\mid T \textrm{ finite tree}\},\\
  \Hom_{\FinTree/C}(T,T') = \left\{ \left. \xy
(-6, 6)*+{T} ="1"; 
(6, 6)*+{T'} ="2";
(0,-4)*+{C }="3";
{\ar@{->}_{ \alpha} "1";"3"};
{\ar@{->}^{\sigma} "1";"2"};
{\ar@{->}^{\beta} "2";"3"};
\endxy \right| \sigma \textrm{ is an {\em isomorphism} of graphs over } C\right\}.
\end{align*}
Note that by definition the category $\FinTree/C$ is a groupoid
(q.v.\ Definition~\ref{def:groupoid}).
\end{definition}

\subsection{Control systems functor }\label{sec:controlfunctor}
\mbox{}\\[-8pt]

\noindent
\begin{definition}[The space of control systems associated to a tree]
\label{def:ctrl-Euc}
  Once again we fix a graph $C$ and a function $\cP\colon C_0\to
  \Euc$.  Let $T {\to}C$ be a finite tree over $C$.  We
  define the infinite-dimensional vector space of {\em control systems
    associated to the tree} $T \to C$ by
\[
\Ctrl(T) := \Hom_\Euc (\PP \lv T, \PP \rt T)
\equiv C^\infty (\PP \lv T, \PP \rt T).
\]
That is, $\Ctrl (T)$ is the space of all smooth
maps from the Euclidean space $\PP \lv T$ to the Euclidean space $\rt
T$. 
\end{definition}

\begin{remark}\label{rmrk:ctrl-euc}

\begin{enumerate}
\item By the identification in Notation~\ref{empt:2.1.3}, each map
  $f\in \Hom_\Euc (\PP \lv T, \PP \rt T)$ defines a control system
  $(\PP \lv T \times \PP \rt T \to \PP \rt T, \PP\lv T\times \PP\rt
  T\to T\PP \rt T)$
\item If the space of leaves $\lv T$ is empty (so that $T$ consists of
  a single vertex) then $\PP \lv T = \{0\}$ and $\Ctrl (T) = \Hom_\Euc (\PP \lv T,
  \PP \rt T) = \Hom_\Euc (\{0\}, \PP\rt T) =\{0\}$, the zero
  dimensional vector space.
\item Our definition only considers the top and bottom generations of
  the tree and ignores everything in the middle.  We will in fact
  only consider  one-generation trees below, so this
  restriction will not be important.
\end{enumerate}
\end{remark}

The assignment $T\mapsto \Ctrl(T)$ extends to a functor $\Ctrl\colon 
\FinTree/C \to \Vect$, from the category of finite trees over $C$ to the
category of infinite dimensional vector spaces as follows.
Let $\xy
(-6, 6)*+{T} ="1"; 
(6, 6)*+{T'} ="2";
(0,-4)*+{C }="3";
{\ar@{->}_{ \alpha} "1";"3"};
{\ar@{->}^{\sigma} "1";"2"};
{\ar@{->}^{\beta} "2";"3"};
\endxy $ be an isomorphism of trees.  Then we have two isomorphisms of
finite sets over $C_0$:
\[
\sigma|_{\lv T}\colon  \lv T \to \lv T' \quad \textrm{and} \quad
\sigma|_{\rt T}\colon \rt T \to \rt T'.
\]
Applying the phase space functor $\PP$ we get two diffeomorphisms:
\begin{equation*}
  \PP(\sigma|_{\lv T})\colon \PP \lv T'\to \PP \lv T,\quad   \PP(\sigma|_{\rt T})\colon \PP \rt T'\to \PP \rt T. 
\end{equation*}
Note that the second diffeomorphism is the identity map by
Remark~\ref{rmrk:2.11}.  Consequently we obtain the map
\begin{equation}\label{eq:1.17}
\Ctrl(\sigma)\colon  \Ctrl(T) \to \Ctrl (T')
\end{equation}
given by 
\begin{equation}\label{eq:1.18}
\Ctrl(\sigma) X \colon = X \circ \PP (\sigma|_{\lv T} )\colon  \PP \lv T' \to \PP
\rt T = \PP \rt T'
\end{equation}
for any $(X\colon  \PP \lv T \to \PP \rt T) \in \Ctrl(T) = \Hom_\Euc (\PP
\lv T, \PP \rt T)$, so that the following diagram commutes:
\begin{equation*}
\xy
(-18, 10)*+{\PP\lv T} ="1"; 
(18, 10)*+{\PP\rt T} ="2";
(-18,-10)*+{\PP\lv T'}="3";
(18,-10)*+{\PP \rt T'}="4";
{\ar@{->}^X "1";"2"};
{\ar@{->}^{\PP(\sigma|_{\lv T})} "3";"1"};
{\ar@{=}_{\PP(\sigma|_{\rt T}) = id } "4";"2"};
{\ar@{-->}_{\Ctrl(\sigma)X} "3";"4"};
\endxy
\end{equation*}
  We leave it to the reader to check that $\Ctrl$ preserves the
  composition of morphisms and thus a functor.

\subsection{Input  trees of colored graphs}\label{sec:groupoid}

\begin{definition}[Input tree]\label{def:input-tree}
  Given a vertex $a$ of a graph $\Gamma$ we define the {\em input
    tree} $I(a)$ to be the following graph:
\begin{itemize}
\item the vertices $I(a)_0$ of $I(a)$ is the set 
\[
I(a)_0 := \{a\}\sqcup \coprod t\inv (a),
\]
where $t\inv (a)$ is the set of arrows in $\Gamma$ with target $a$;
\item the edges $I(a)_1$ of $I(a)$ is the set of pairs
\[
I(a)_1 := \{(a, \gamma)\mid t(\gamma) =a \},
\]
\item  the source and target maps $I(a)_1\toto I(a)_0$ are defined by
\[
s(a, \gamma) = \gamma \quad\textrm{and}\quad  t(a, \gamma) = a.
\]
In pictures
\[
 \xy  (-8,0)*+{\gamma \bullet}="x";
(8, 0)*+{\bullet a}="z";
{\ar@/^0.5pc/^{(a,\gamma) } "x";"z"};
\endxy .
\]
\end{itemize}
It is easy to see that the graph $I(a)$ is a tree with
\[
\rt I(a) = \{a\},\quad \lv I(a) =  t\inv (a)
\]
\end{definition}
\begin{remark}\label{remark:xi}\label{empt:xi}
  We have a natural map of graphs $\xi\colon I(a)\to \Gamma$ given on
  arrows by $\xi (a, \gamma) = \gamma$.  Thus if $\Gamma \to C$ is a
  graph over $C$ then so are its input trees $I(a)$ for each vertex
  $a\in \Gamma_0$, and the composite
  $I(a)\stackrel{\xi}{\to}\Gamma\stackrel{c}{\to}C$ makes $I(a)$ into
  a graph over $C$.
\end{remark}

\begin{definition}[The groupoid $G(\Gamma)$] \label{def:groupoidGGamma}
Given a graph $\Gamma \to
  C$ over $C$ we define its {\em symmetry groupoid} $G(\Gamma)$ to be
  the groupoid with the set of objects $G(\Gamma)_0$ equal to the set
  of the input trees of $\Gamma$:
\[
G(\Gamma)_0 =\{ I(a) \mid a\in \Gamma_0\};
\]
and the sets of morphisms 
\[
\Hom_{G(\Gamma)}(I(a), I(b) ):= \{ \sigma\colon I(a) \to I(b) \mid
\sigma \textrm{ is an isomorphism of graphs over } C\}.
\]
In other words $G(\Gamma)$ is the full subcategory of $\FinTree/C$
with the set of  objects $\{I(a)\}_{a\in \Gamma_0}$.
\end{definition}

\subsection{Virtual groupoid invariant vector fields}\label{sec:Vgamma}
\mbox{}\\[-8pt]

\noindent
Given the groupoid of input trees $G(\Gamma)$ of a graph $\Gamma$ we
restrict the functor $\Ctrl$ to $G(\Gamma)$ and get a functor
\[
\Ctrl_\Gamma = \Ctrl|_{G(\Gamma)}\colon  G(\Gamma) \to \Vect.
\]
\begin{remark}
  For an input tree $I(a)$ of a graph $\Gamma \to C$ the space
  $\Ctrl(I(a))$ is the space of all smooth maps from the phase space
  $\PP \lv I(a)$ of leaves to the phase space of the root $\PP\{a\}$.
  Chasing through the definitions we see that $\PP \lv I(a)$ is a
  finite product of Euclidean spaces, one Euclidean space for each
  {\em arrow} pointing into the node $a$.  If all the arrows with
  target $a$ have distinct sources, then $\PP \lv I(a)$ is the same as
  the product of the appropriate vector spaces indexed by these
  sources, which is the definition of Golubitsky--Pivato--Stewart in
  \cite{Golubitsky.Pivato.Stewart.04}.  However, note that if there is
  a node $b$ of $\Gamma$ which is connected to $a$ by multiple arrows
  then our definition is different from the
  Golubitsky--Stewart--T\"or\"ok definition
  \cite{Golubitsky.Stewart.Torok.05}.  The difference can be seen in
  the Example~\ref{ex:1} below.
\end{remark}

\begin{definition}[Virtual groupoid invariant vector fields]\label{def:vgivf}
  Fix the graph of colors $C=\{C_1 \toto C_0\}$ and a phase space
  function $\cP\colon C_0 \to \Euc$. Given a graph $\Gamma$ over $C$
  we define the vector space of {\em virtual groupoid invariant vector
    fields} $\V(\Gamma)$ to be the limit of the functor $\Ctrl_\Gamma$
  (q.v.\ Definition~\ref{def:invariants} and the subsequent
  discussion):
\[
\V(\Gamma): = \lim (\Ctrl_\Gamma\colon  G(\Gamma) \to \Vect).
\]
\end{definition}
Note that since $\V(\Gamma)$ is a limit, it comes with a family of the
canonical projections
\[
\left\{\varpi_a\colon  \V(\Gamma) \to \Ctrl(I(a)) \right\}_{a\in\Gamma_0}.
\]
\begin{remark}\label{rmrk:2.8.3}
The vector space $\V (\Gamma)$ has the following concrete description.
Consider the product $\prod _{a\in \Gamma_0} \Ctrl(I(a))$ with its
canonical projections $p_b\colon \prod _{a\in \Gamma_0} \Ctrl(I(a))
\to \Ctrl(I(b))$, $b\in \Gamma_0$.  For a vector $X\in\prod _{a\in
  \Gamma_0} \Ctrl(I(a))$ we denote the $a$-th component $p_a(X)$ of
$X$ by $X_a$. Then
\[
\V(\Gamma) = \left\{\left. X\in \prod _{a\in \Gamma_0}
    \Ctrl(I(a))\,\,\right|\,\, \Ctrl (\sigma) X_a = X_b \textrm{ for
    all arrows } I(a)\stackrel{\sigma}{\to }I(b) \textrm{ in the
    groupoid } G(\Gamma)\right\}
\]
with the canonical projections $ \varpi_a\colon \V(\Gamma) \to
\Ctrl(I(a)) $ given by restrictions $p_a|_{\V(\Gamma)}$.  To check
that the collection $\{ \varpi_a\colon \V(\Gamma) \to
\Ctrl(I(a))\}_{a\in G_0}$ is in fact a limit of $\Ctrl_\Gamma$ it is
enough to check its universal properties, which we leave to the
reader.
\end{remark}

\begin{example}\label{ex:1}
  Let $C$ be a graph with one vertex $v$ and one edge: $C =
  \,\xymatrix{ *+[o][F]{} \ar@(dr,ur)}$\quad\quad .  Then any graph
  $\Gamma$ is canonically a graph over $C$ and the category of finite
  graphs over $C$ is isomorphic to the category of finite graphs.  Let
  $\cP\colon  C_0 =\{\circ\}\to \Euc$ be given by $\cP (\circ) = V$ for some
  Euclidean space $V$.  Let $\Gamma$ be the graph with two nodes and two
  arrows:
\[
\Gamma = \,\,
\xy
(-20,0)*++[o][F]{1}="1"; 
(0,0)*++[o][F]{2}="2"; 
{\ar@/_1.pc/_\beta "1";"2"};
{\ar@/^1.pc/^\alpha "1";"2"};
\endxy 
\]
We have $\Gamma_0 = \{1,2\}$ and $\PP\Gamma_0 = V\times V$.  The input
trees of the two nodes are 
\begin{equation*}
  I(1) = \xy(0,0)*++[o][F]{1};\endxy, \quad\quad\quad I(2) = \xy
(-20,5)*+{\{\alpha\}}="1a"; 
(-20,-5)*+{\{\beta\}}="1b"; 
(0,0)*++[o][F]{2}="2"; 
{\ar@{->}^{ \alpha}  "1a";"2"};
{\ar@{->}_{ \beta}  "1b";"2"};
\endxy .
\end{equation*}
Since $\lv(I(1)) = \emptyset$, we have $\Ctrl(I(1)) = \{0\}$.  The set
of leaves of the input tree $I(2)$ has two elements: $\lv(I(2)) =
\{\alpha,\beta\}$. Through the map $\xi$ (q.v.~Remark~\ref{remark:xi})
the leaves of $I(2)$ inherit the colors of their sources. Hence
$\PP\lv(2) = V\times V$.  Therefore $\Ctrl(I(2)) = \Hom_\Euc(V\times
V,V)= C^\infty(V\times V,V)$.  

The groupoid $G(\Gamma)$ has two objects: $I(1)$ and $I(2)$.  The only
nontrivial morphism of $G(\Gamma)$ is the map $\sigma:I(2)\to I(2)$
which exchanges $\alpha$ and $\beta$:
\[
G(\Gamma): \quad\quad
I(1)
 \quad
\xy
(1,3)*{}="A";
(1,-3)*{}="D";
"A"; "D" **\crv{(8,10) & (15,0)& (8,-10)}?(.99)*\dir{>}; 
 (0,0)*++{I(2)}="B";
(16,0)*++{\sigma}="C";
\endxy
\]
Consequently 
\[
(\Ctrl(\sigma) f)(v,w) = f(w,v) 
\]
for any $f\in \Ctrl(I(2))$ and any $(v,w)\in \PP \lv
I(2)$.  We conclude that
\begin{equation*}
  \V(\Gamma) = \{0\}\oplus C^\infty(V\times V,V)^{\ZZ_2} \simeq 
C^\infty (V\times V, V)^{\ZZ_2},
\end{equation*}
where $\ZZ_2$ denotes the two-element group.
Now consider the graph $\Gamma'$:
\begin{equation*}
\Gamma'=\,\,  
\xy
(-20,5)*++[o][F]{1a}="1a"; 
(-20,-5)*++[o][F]{1b}="1b"; 
(0,0)*++[o][F]{2}="2"; 
{\ar@{->}^{ \alpha}  "1a";"2"};
{\ar@{->}_{ \beta}  "1b";"2"};
\endxy
\end{equation*}
The groupoid $G(\Gamma')$ has three objects: the trees $I(1a)$, $I(1b)$ and $I(2)$.
It has three nontrivial arrows:
\[
G(\Gamma')= \quad\quad
\xy
(0,10)*++{I(1a)}="1"; 
(0,-10)*++{I(1b)}="2"; 
{\ar@/_1.pc/ "1";"2"};
{\ar@/_1.pc/ "2";"1"};
\endxy \quad
\xy
(1,3)*{}="A";
(1,-3)*{}="D";
"A"; "D" **\crv{(8,10) & (15,0)& (8,-10)}?(.99)*\dir{>}; 
 (0,0)*++{I(2)}="B";
(16,0)*++{\sigma}="C";
\endxy
\]
A calculation similar to the one above shows that 
\begin{equation*}
\V(\Gamma') =
 \{0\}\oplus  \{0\}\oplus   C^\infty(V\times V,V)^{\ZZ_2}.
\end{equation*}
\end{example}

\subsection{The relation of $\V(\Gamma)$ to vector fields on the phase
  space $\PP\Gamma_0$}\label{sec:Sgamma}
\mbox{}\\[-8pt]

\begin{empt}\label{not:chi}
  Recall that the space of vector fields $\chi(W)$ on a Euclidean
  space $W$ is canonically isomorphic to $\Hom_{\Euc}(W,W) \equiv
  C^\infty(W,W)$ (q.v.\ \ref{empt:euc}).
\end{empt}

Given a finite graph $\Gamma$ over a graph of colors $C$ and the phase
space function $\cP\colon C_0 \to \Euc$, the  functor $\PP$ assigns a
phase space $\PP \Gamma_0$ to the set $\Gamma_0$ of vertices of
$\Gamma$.  There exists a canonical {\em linear} map $S_\Gamma$ from the
space of virtual groupoid-invariant vector fields $\V(\Gamma)$ to the
vector space $\chi (\PP\Gamma_0)$ of vector fields on $\PP \Gamma_0$,
which we now define.

Since $\chi (\PP\Gamma_0) = \Hom_\Euc (\PP\Gamma_0, \PP\Gamma_0) =
\Hom_\Euc (\PP\Gamma_0, \prod_{a\in \Gamma_0} \PP \{a\})$ (q.v.\
Remark~\ref{rmrk:hom-into-product}), the space of vector fields on
$\PP \Gamma_0$ is canonically the product 
\[
\chi (\PP\Gamma_0) = 
\prod_{a\in \Gamma_0} \Hom_\Euc (\PP\Gamma_0, \PP \{a\}).
\]
To define the map into a product, it is enough to define a map into
its factors.  Thus we need maps $\V(\Gamma) \to \Hom_\Euc
(\PP\Gamma_0, \PP \{a\})$ for each vertex $a$ of the graph $\Gamma$.

For each input tree $I(a)$ we have a map of graphs $\xi\colon
I(a)\to\Gamma$ (q.v.~Remark~\ref{empt:xi}).  Therefore we have a map
$\xi|_{\lv I(a)}\colon \lv I(a)\to \Gamma_0$ of finite sets over
$C_0$.  Applying the phase space functor $\PP$ we get maps
\[
\PP(\xi|_{\lv I(a)})\colon  \PP\Gamma_0 \to \PP \lv I(a).
\]
The pullback by $\PP(\xi|_{\lv I(a)})$ gives
\[
(\PP(\xi|_{\lv I(a)}))^*\colon \Ctrl(I(a))\to \Hom_\Euc (\PP\Gamma_0, \PP \{a\}),\quad F\mapsto F\circ \PP(\xi|_{\lv I(a)}).
\]
By the universal properties of the product $\chi (\PP\Gamma_0)$ there a unique canonical  map 
$S_\Gamma\colon  \V(\Gamma) \to \chi (\PP \Gamma_0)$ making 
the diagram
\begin{equation} \label{eq:2.2}
\xy (-22, 9)*+{\V (\Gamma)}="1"; (24,9)*+{\chi (\PP \Gamma_0)}="2";
(-22, -10)*+{\Ctrl (I(a))}="3"; (24,-10)*+{\Hom_\Euc(\PP\Gamma_0, \PP
  \{a\})}="4"; {\ar@{-->}^{\exists ! S_\Gamma} "1";"2"};
    {\ar@{->}_{\varpi_a} "1";"3"}; {\ar@{->}^{p_a} "2";"4"};
    {\ar@{->}_{(\PP(\xi|_{\lv I(a)}) )^*\quad} "3";"4"}; \endxy
\end{equation}
commute (see Definition~\ref{def:limit}).  Here, as before, 
\begin{equation}\label{eq:defofvarpi}
  \varpi_a\colon  \V \Gamma\to \Ctrl ( I(a))
\end{equation}
 and 
\begin{equation}\label{eq:defofpa}
  p_a\colon \chi (\PP\Gamma_0) = \prod_{a'\in \Gamma_0} \Hom_\Euc(\PP\Gamma_0, \PP \{a'\}) \to 
\Hom_\Euc (\PP\Gamma_0, \PP \{a\})
\end{equation}
denote the canonical projections.

\begin{example}\label{ex:2}
  We return to the graphs $\Gamma$ and $\Gamma'$ of Example~\ref{ex:1}
  and the phase space function $\cP(\circ) =V$.  The space $\chi (\PP
  \Gamma_0)$ of vector fields on $\PP \Gamma_0$ is $C^\infty (V^2,
  V^2)$ and $\chi (\PP \Gamma'_0) = C^\infty (V^3, V^3)$.  Unraveling
  the definitions we see that
\[
S_\Gamma : \V(\Gamma) = C^\infty (V\times V, V)^{\ZZ_2} \to C^\infty (V^2,
V^2) = \chi (\PP \Gamma_0)
\]
is given by 
\[
(S_\Gamma (f))(v_1, v_2)  = (0, f(v_1, v_1))
\]
for all $(v_1, v_2)\in V\times V$.  This is because $\xi|_{\lv I(2)}$
is the map sending both nodes of $\lv I(2)$ to $\xy
(0,0)*++[o][F]{1}\endxy$.  In the case of $\Gamma'$ 
\[
S_{\Gamma'}: \V(\Gamma') = C^\infty (V\times V, V)^{\ZZ_2} \to C^\infty (V^3,
V^3) = \chi (\PP \Gamma'_0)
\]
is given by 
\[
(S_{\Gamma'} (f))(v_{1a}, v_{1b}, v_2)= (0, 0, f(v_{1a}, v_{1b})). 
\]
Note that $S_{\Gamma'}$ is injective, while $S_\Gamma$ is not:
$S_\Gamma$ vanishes on all the functions $f$ whose restriction to the
diagonal $\Delta_V \subset V\times V$ is zero.
\end{example}

\subsection{Group-invariant versus groupoid-invariant vector
  fields}\footnote{This subsection may be skipped on the first
    reading} \label{sec:gvg}
\mbox{}\\[-8pt]

\noindent
In laying out the main themes of the survey article
\cite{Golubitsky.Stewart.06} Golubitsky and Stewart list a number of
questions ``raised by the groupoid point of view.''  In particular
they ask (op.\ cit., p.~309):
\begin{quote}
``What are the analogies with the group case? When do these analogies fail
and why?''
\end{quote}
In this subsection we provide a partial answer and suggest an approach
to answering this question more fully. In the previous subsections we
constructed the space of virtual groupoid-invariant vector fields as ``fixed
points'' of a groupoid representation.  Just as in the case of groups,
there is a notion of a groupoid action that is more general than that
of a groupoid representation.  To recall the relevant definition we
need more notation.

\begin{notation}
  Given maps of sets $f\colon X\to Z$ and $g\colon Y\to Z$ the {\em
    fiber product} $X\times _{f,Z,g}Y$ is the set
\[
X\times _{f,Z,g}Y:= \{(x,y) \in X\times Y\mid f(x) = g(y)\}.
\]
\end{notation}
\noindent
(Note that the fiber product of $f$ and $g$ together with the
canonical projections $p_X\colon X\times _{f,Z,g}Y\to X$, $p_Y\colon
X\times _{f,Z,g}Y\to Y$ is the limit of a functor from the category
$\bullet \longrightarrow\bullet\longleftarrow \bullet$ with 3 elements
and 2 non-identity maps to the category $\Set$ of sets which sends the
non-identity arrows to $f$ and $g$ respectively; q.v.\
\ref{def:limit}.)

\begin{definition}
  A {\em action} of a groupoid $G=\{G_1\toto G_0\}$ on a set $X$ is a
  pair of maps $\an\colon X\to G_0$ (anchor) and $\mathtt{a}\colon
  G_1 \times_{s, G_0, \an}X\to X$ (action; $s\colon G_1\to G_0$ denotes
  the source map) so that
\begin{enumerate}
\item $\an (\mathtt{a} (g, x)) = t(g)$ for all $(g,x)\in G_1
  \times_{s, G_0, \an}X$,
\item $\mathtt{a}(1_{\an(x)}, x) = x$ for all $x\in X$ and
\item $\mathtt{a}(g_2g_1 , x) = \mathtt{a}(g_2, \mathtt{a}(g_1, x))$
  for all composable arrows $(g_2,g_1)\in G_1\times _{s, G_0, t}G_1$
  ($t\colon G_1\to G_0$ is the target map) and all $x\in X$ with
  $\an(x)= s(g_1)$.
\end{enumerate}
\end{definition}
It is not hard to see that a functor $\rho\colon G\to \Vect$ from a (small)
groupoid $G$ to the category $\Vect$ of vector spaces defines an
action of $G$ on the set $X=\bigsqcup_{c\in G_0} \rho (c)$.  The
anchor $\an\colon X\to G_0$ is defined by $\an(\rho(c)) = c$.  The action
$\mathtt{a}$ is given by
\[
\mathtt{a}(g,v) = \rho(g)v.
\]
There is, however, one crucial difference between {\em
  groupoid}-invariant vector fields and {\em group}-invariant vector
fields as they are studied in, for
example,~\cite{Golubitsky.Stewart.Schaeffer.book} or~\cite{Field}.  In
the case of the group-invariant vector fields on a phase space $M$ ($M$ is a
Euclidean space or, more generally, a manifold) the action of a group
$H$ on the space of vector fields  is {\em induced by an
  action of $H$ on $M$}:
\[
(h\cdot v)(m) := (Dh_M)_m ( v (h_M \inv (m)))
\]
for all group elements $h\in H$, points $m\in M$ and vector fields $v\in \chi (M)$.
Here $h_M\colon M\to M$ denotes the diffeomorphism defined by the action of $h$ on $M$:
\[
h_M (m) = h\cdot m.
\]
In the case of groupoid-invariant vector fields (q.v. \ref{def:vgivf})
the action of a groupoid $G(\Gamma)$ on the set of the associated
control systems $\bigsqcup_{a\in \Gamma_0} \Ctrl(I(a))$ is {\em not
  induced by any action of the groupoid $G(\Gamma)$ on the phase space
  $\PP \Gamma_0$.}  Consequently, the intuition acquired in the
studies of group-invariant dynamical systems can be as much of a
hindrance as of help in thinking about groupoid invariant dynamical
systems.

We speculate that to fully answer the question of Golubitsky and Stewart
quoted above one would need, on one hand, to understand dynamical
systems with group-invariant vector fields where the group action {\em
  in not induced} by an action of the group on the phase space, and,
on the other hand, to study groupoid invariant vector fields where the
action of the groupoid on the vector fields {\em is induced} by the
action of the groupoid on the phase space.

\subsection{The assignment $\Gamma \mapsto \V\Gamma$ extends to a functor}
\label{sec:functor}\mbox{}\\[-8pt]

\noindent
A map $\varphi\colon \Gamma \to \Gamma'$ of graphs over a fixed graph
$C$ induces, for each node $a\in \Gamma_0$ a map of input trees
\[
\varphi_a\colon I(a) \to I (\varphi(a)), 
\]
 on arrows 
\begin{equation}\label{eq:varphi_a}
\varphi_a (a,\gamma) = (\varphi(a), \varphi(\gamma)). 
\end{equation}
However, it does not, in
general, induce a map from the groupoid $G(\Gamma)$ to the groupoid
$G(\Gamma')$ let alone a map between the spaces of virtual invariant
vector fields $\V(\Gamma)$ and $\V(\Gamma')$.

\begin{definition}
  A map $\varphi\colon \Gamma \to \Gamma'$ of graphs over a graph $C$ of
  colors is {\em \'etale} (or a {\em local isomorphism}) if, for each node
  $a$ of the graph $\Gamma$, the induced map
\[\varphi_a\colon  I(a) \to I (\varphi(a))\] 
of input trees defined by \eqref{eq:varphi_a} is an isomorphism of
graphs over $C$.
\end{definition}
\noindent
Note that the outgoing edges play no role in the definition of an
\'etale map.
\begin{remark}
  The notion of an \'etale map of directed graphs is not new and goes
  by many different names~\cite{Vigna1, Vigna2}. Calling it a ``local
  isomorphism'' could be misleading, since we only consider incoming
  arrows, and it might better be termed ``local in-isomorphism'' as
  in~\cite{Vigna1, Vigna2}.  In Higgins~\cite{Higgins} it is called a
  covering map, but since local isomorphisms of graphs are not
  necessary surjective on nodes, calling it a ``covering map'' may
  give a wrong impression.  Calling them ``\'etale'' is a lot shorter
  than calling them ``local isomorphisms.'' See~\cite{VignaWP} for an
  extensive discussion of the history of the notion and various
  contexts in which it arouse.
\end{remark}

\begin{remark}
  If a morphism $\varphi\colon \Gamma\to \Gamma'$ in $\Graph/C$ is \'etale
  than for each node $a$ of $\Gamma$ and each edge $\gamma'$ in
  $\Gamma'$ with $t(\gamma') = \varphi(a)$ there exists a unique edge
  $\gamma$ in $\Gamma$ with $\varphi(\gamma) = \gamma'$. Consequently given
  any path in $\Gamma'$ with the end node $f(a)$ there is a unique
  lift of this path to a path in $\Gamma$ with the end node $a$
  (q.v.\ footnote \ref{foot:etale} on p.~\pageref{foot:etale}).  We
  will not use the unique lifting property of paths in this paper, but
  it will play an important role in the subsequent paper on the
  groupoid-invariant discrete-time dynamics on networks.
\end{remark}

\begin{remark}
  Finite graphs over a fixed graph $C$, along with \'etale maps, form a
  subcategory of the category $\FinGraph/C$ of finite graphs over $C$,
  since the composition of \'etale maps is \'etale.  We call this
  subcategory $\et{(\FinGraph/C)}$.
\end{remark}

\begin{theorem}\label{thmA}
  The map $\V$ that assigns to each finite graph $\Gamma$ over $C$ the vector
  space of $G(\Gamma)$-invariant virtual vector fields on $\PP\Gamma_0$
  extends to a contravariant functor
\[
\V\colon \opet{(\FinGraph/C)} \to \Vect.
\]
\end{theorem}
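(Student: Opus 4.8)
The plan is to construct, for each étale map $\varphi\colon \Gamma\to\Gamma'$ in $\et{(\FinGraph/C)}$, a linear map $\V(\varphi)\colon \V(\Gamma')\to\V(\Gamma)$, and to check that this assignment is contravariantly functorial. The key observation is that, by hypothesis, each induced map of input trees $\varphi_a\colon I(a)\to I(\varphi(a))$ is an isomorphism of trees over $C$, hence is a morphism in the groupoid $\FinTree/C$. So although $\varphi$ need not send the subgroupoid $G(\Gamma)\subset\FinTree/C$ into $G(\Gamma')$, it does give us, for every vertex $a$ of $\Gamma$, a distinguished isomorphism living in $\FinTree/C$ from $I(a)$ to an object $I(\varphi(a))$ of $G(\Gamma')$. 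Applying the control functor $\Ctrl\colon\FinTree/C\to\Vect$ to these isomorphisms yields linear isomorphisms $\Ctrl(\varphi_a)\colon\Ctrl(I(a))\to\Ctrl(I(\varphi(a)))$.

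First I would define $\V(\varphi)$ using the universal property of the limit $\V(\Gamma)=\lim\Ctrl_\Gamma$. Precisely, for each $a\in\Gamma_0$ I define a linear map
\[
\V(\Gamma')\xrightarrow{\varpi'_{\varphi(a)}}\Ctrl(I(\varphi(a)))\xrightarrow{\Ctrl(\varphi_a)^{-1}}\Ctrl(I(a)),
\]
where $\varpi'_b$ denotes the canonical projection from $\V(\Gamma')$ attached to the vertex $b$ of $\Gamma'$. I then check that this family $\{\Ctrl(\varphi_a)^{-1}\circ\varpi'_{\varphi(a)}\colon\V(\Gamma')\to\Ctrl(I(a))\}_{a\in\Gamma_0}$ forms a cone over the diagram $\Ctrl_\Gamma\colon G(\Gamma)\to\Vect$; that is, for every isomorphism $\sigma\colon I(a)\to I(b)$ in $G(\Gamma)$ we need $\Ctrl(\sigma)\circ\bigl(\Ctrl(\varphi_a)^{-1}\circ\varpi'_{\varphi(a)}\bigr)=\Ctrl(\varphi_b)^{-1}\circ\varpi'_{\varphi(b)}$. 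The universal property of $\V(\Gamma)$ then produces the unique linear map $\V(\varphi)\colon\V(\Gamma')\to\V(\Gamma)$ with $\varpi_a\circ\V(\varphi)=\Ctrl(\varphi_a)^{-1}\circ\varpi'_{\varphi(a)}$ for all $a$.

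The cone condition is where the argument has content, and I expect it to be the main obstacle. Given $\sigma\colon I(a)\to I(b)$ in $G(\Gamma)$, the maps $\varphi_a,\varphi_b,\sigma$ are all isomorphisms in $\FinTree/C$, so $\tau:=\varphi_b\circ\sigma\circ\varphi_a^{-1}\colon I(\varphi(a))\to I(\varphi(b))$ is an isomorphism of trees over $C$ between two objects of $G(\Gamma')$, hence a morphism in $G(\Gamma')$. Since $\V(\Gamma')$ is a cone over $\Ctrl_{\Gamma'}$, we have $\Ctrl(\tau)\circ\varpi'_{\varphi(a)}=\varpi'_{\varphi(b)}$; using functoriality of $\Ctrl$ to expand $\Ctrl(\tau)=\Ctrl(\varphi_b)\circ\Ctrl(\sigma)\circ\Ctrl(\varphi_a)^{-1}$ and rearranging gives exactly the required identity. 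The one point that needs care is verifying that $\tau$ genuinely lands in $G(\Gamma')$ rather than merely in $\FinTree/C$—this holds because $G(\Gamma')$ is by definition the \emph{full} subcategory of $\FinTree/C$ on the objects $\{I(c)\}_{c\in\Gamma'_0}$, so it contains every tree-isomorphism between such objects.

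Finally I would verify functoriality. That $\V(\mathrm{id}_\Gamma)=\mathrm{id}_{\V(\Gamma)}$ follows because $(\mathrm{id}_\Gamma)_a=\mathrm{id}_{I(a)}$ and $\Ctrl$ preserves identities, so $\varpi_a\circ\V(\mathrm{id}_\Gamma)=\varpi_a$ for all $a$, and uniqueness in the universal property forces $\V(\mathrm{id}_\Gamma)=\mathrm{id}$. For composites $\Gamma\xrightarrow{\varphi}\Gamma'\xrightarrow{\psi}\Gamma''$ one checks from \eqref{eq:varphi_a} that $(\psi\circ\varphi)_a=\psi_{\varphi(a)}\circ\varphi_a$ as maps of input trees, whence by functoriality of $\Ctrl$ the defining cones of $\V(\psi\circ\varphi)$ and of $\V(\varphi)\circ\V(\psi)$ agree on each projection $\varpi_a$; uniqueness then yields $\V(\psi\circ\varphi)=\V(\varphi)\circ\V(\psi)$, which is the required contravariance. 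All the verifications beyond the cone condition are routine diagram chases, so I would state them and leave the bookkeeping to the reader.
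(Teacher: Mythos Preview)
Your proposal is correct and follows essentially the same approach as the paper: you build a cone on $\Ctrl_\Gamma$ from the limit cone of $\Ctrl_{\Gamma'}$ via the isomorphisms $\Ctrl(\varphi_a)^{-1}$, then invoke the universal property of $\V(\Gamma)$. The only cosmetic difference is that the paper packages your morphism $\tau=\varphi_b\circ\sigma\circ\varphi_a^{-1}$ as the value on arrows of a fully faithful functor $G(\varphi)\colon G(\Gamma)\to G(\Gamma')$ (proved in a separate lemma), whereas you construct $\tau$ inline and observe directly that it lies in $G(\Gamma')$ by fullness; the underlying argument is identical.
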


We first prove

\begin{lemma}\label{lemma:ff}
  An \'etale map $\varphi\colon \Gamma\to \Gamma'$ of graphs over $C$ induces
  a fully faithful functor (q.v. Definition~\ref{def:full})
\[
G(\varphi)\colon G(\Gamma)\to G(\Gamma')
\]
between the corresponding groupoids.
\end{lemma}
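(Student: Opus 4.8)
The plan is to define the functor $G(\varphi)$ on objects by $I(a)\mapsto I(\varphi(a))$ and on morphisms by postcomposition/conjugation, then check functoriality, and finally verify full faithfulness by exhibiting the inverse to the map on hom-sets. First I would set up the object assignment: for a node $a$ of $\Gamma$ the étale hypothesis says precisely that $\varphi_a\colon I(a)\to I(\varphi(a))$ of \eqref{eq:varphi_a} is an isomorphism of graphs over $C$. So given a morphism $\sigma\colon I(a)\to I(b)$ in $G(\Gamma)$ (an isomorphism over $C$), I would define
\[
G(\varphi)(\sigma):=\varphi_b\circ\sigma\circ\varphi_a^{-1}\colon I(\varphi(a))\to I(\varphi(b)),
\]
which makes sense because $\varphi_a,\varphi_b$ are isomorphisms over $C$ and $\sigma$ is an isomorphism over $C$; hence the composite is an isomorphism over $C$, i.e. a morphism in $G(\Gamma')$. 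Functoriality (identities to identities, composites to composites) is then an immediate diagram chase: $\varphi_a^{-1}\varphi_a=\mathrm{id}$ collapses the inner factors, and $G(\varphi)(\tau)G(\varphi)(\sigma)=\varphi_c\tau\varphi_b^{-1}\varphi_b\sigma\varphi_a^{-1}=\varphi_c(\tau\sigma)\varphi_a^{-1}=G(\varphi)(\tau\sigma)$.

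For full faithfulness, I would show that for each pair of nodes $a,b$ of $\Gamma$ the map
\[
\Hom_{G(\Gamma)}(I(a),I(b))\longrightarrow \Hom_{G(\Gamma')}(I(\varphi(a)),I(\varphi(b))),\qquad \sigma\longmapsto \varphi_b\circ\sigma\circ\varphi_a^{-1},
\]
is a bijection. Injectivity is clear since $\varphi_a,\varphi_b$ are isomorphisms (one can cancel them). For surjectivity, given any isomorphism $\rho\colon I(\varphi(a))\to I(\varphi(b))$ over $C$, I would set $\sigma:=\varphi_b^{-1}\circ\rho\circ\varphi_a$; this is an isomorphism $I(a)\to I(b)$ of graphs over $C$ because each factor is, hence a morphism of $G(\Gamma)$, and by construction $G(\varphi)(\sigma)=\rho$. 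So the hom-set map is a bijection with explicit two-sided inverse $\rho\mapsto\varphi_b^{-1}\rho\varphi_a$, giving full faithfulness.

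The one point that deserves care — and which I expect to be the only genuine obstacle — is making sure the object assignment $a\mapsto I(\varphi(a))$ is well-defined as a map on the \emph{objects of the groupoid} rather than on nodes: two distinct nodes $a,a'$ of $\Gamma$ could in principle have the same input tree $I(a)=I(a')$ as objects of $\FinTree/C$ (if $\varphi$ is not injective on nodes, or even if $\Gamma$ has repeated local structure), so I should either phrase $G(\varphi)_0$ as a function $\Gamma_0\to (\FinTree/C)_0$ landing in $G(\Gamma')_0$ and note that this is all that is required (the groupoids $G(\Gamma)$, $G(\Gamma')$ are defined as full subcategories of $\FinTree/C$ with a \emph{chosen indexing} of objects by nodes), or check compatibility directly. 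Since $G(\Gamma)$ is literally the full subcategory of $\FinTree/C$ on $\{I(a)\}_{a\in\Gamma_0}$ and similarly for $G(\Gamma')$, and since $\varphi_a$ is always an isomorphism in $\FinTree/C$, the construction above lands in the right full subcategory regardless, so this is a bookkeeping remark rather than a real difficulty. Everything else is a routine chase through the definition of $\varphi_a$ in \eqref{eq:varphi_a} and the definition of morphisms in $G(\Gamma)$.
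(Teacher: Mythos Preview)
Your proposal is correct and follows essentially the same route as the paper: define $G(\varphi)$ on objects by $I(a)\mapsto I(\varphi(a))$ and on morphisms by the conjugation formula $G(\varphi)(\sigma)=\varphi_b\circ\sigma\circ\varphi_a^{-1}$, then observe that $\rho\mapsto\varphi_b^{-1}\circ\rho\circ\varphi_a$ is the two-sided inverse on hom-sets. The paper's proof is terser (it omits the explicit functoriality check and the object-level well-definedness remark), but the argument is identical; your observation about the object indexing is harmless bookkeeping since the root of $I(a)$ is literally the node $a$, so distinct nodes yield distinct input trees.
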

\begin{proof}
  We define the functor $G(\varphi)$ on objects by: $G(\varphi) (I(a))
  = I(\varphi (a))$.  Given an arrow $I(a) \stackrel{\sigma}{\to}
  I(b)$ in $G(\Gamma)$ we define $G(\varphi)(\sigma)\colon
  I(\varphi(a)) \to I(\varphi(b))$ by
\begin{equation}\label{eq:G(varphi)sigma}
G(\varphi)\sigma := \varphi_b\circ \sigma  \circ \varphi_a \inv,
\end{equation}
so that the diagram
\begin{equation}\label{eq:3.1}
\xymatrix{ 
I(a) \ar[r]^{\sigma} \ar[d]^{\varphi_a} & I(b)\ar[d]^{\varphi_b}\\
I(\varphi(a))\ar[r]_{G(\varphi)\sigma} & I(\varphi(b))
}
\end{equation}
commutes.\footnote{ Note that our definition of $G(\varphi)\sigma$
  only makes sense if $\varphi_a$ is an isomorphism, i.e., only if
  $\varphi$ is \'etale.}
Moreover, the inverse of $G(\varphi)\colon\Hom (I(a), I(b)) \to \Hom
(I(\varphi (a)), I(\varphi(b)))$ is given by
\[
G(\varphi)\inv \tau = \varphi_b\inv \circ \tau\circ \varphi_a,
\]
so $G(\varphi)$ is fully faithful.
\end{proof}

\begin{remark}\label{rm:bowtie}
  If $\bowtie $ is a balanced equivalence relation (see
  \cite{Golubitsky.Stewart.Torok.05}) on a colored graph $\Gamma\to C$
  then the quotient map $\pi\colon \Gamma \to \Gamma/\bowtie$ is
  \'etale.  Moreover the induced map of groupoids $G(\pi)$ is not only
  faithful but also surjective on objects, hence an equivalence of
  categories by Theorem~\ref{thm:ODEequiv} below.
\end{remark}

\begin{proof}[Proof of Theorem~\ref{thmA}]
Consider an \'etale map of graphs over $C$: 
\[
\xy
(-6, 6)*+{\Gamma} ="1"; 
(6, 6)*+{\Gamma'} ="2";
(0,-4)*+{C }="3";
{\ar@{->}_{ \alpha} "1";"3"};
{\ar@{->}^{\varphi} "1";"2"};
{\ar@{->}^{\beta} "2";"3"};
\endxy ,
\]
that is, a morphism in the category $\et{(\FinGraph/C)}$.  Given an
arrow $I(a)\stackrel{\sigma}{\longrightarrow}I(b)$ in the groupoid
$G(\Gamma)$ we have a commuting diagram \eqref{eq:3.1} in the category
of finite trees over $C$.  Applying the control functor $\Ctrl$ gives
us a commuting diagram in the category $\Vect$ of vector spaces:
\begin{equation}\label{eq:3.2}
\xymatrix@C+2cm{ 
\Ctrl(I(a)) \ar[r]^{\Ctrl(\sigma)} \ar[d]^{\Ctrl(\varphi_a)} 
&\Ctrl( I(b))\ar[d]^{\Ctrl(\varphi_b)}\\
\Ctrl(I(\varphi(a))\ar[r]_{\Ctrl(G(\varphi)\sigma)} & \Ctrl(I(\varphi(b)).
}
\end{equation}
Now, for any arrow $I(a)\stackrel{\sigma}{\to} I(b)\in G(\Gamma)$ we
have the following diagram:
\[
\xy
(-40,0)*+{\V (\Gamma')}="1";
(-20,12)*+{\Ctrl_{\Gamma'}(\varphi(a))}="2";
(-20,-12)*+{\Ctrl_{\Gamma'}(\varphi(b))}="3";
(20,12)*+{\Ctrl_{\Gamma}(a)}="4";
(20,-12)*+{\Ctrl_{\Gamma}(b)}="5";
(40,0)*+{\V (\Gamma)}="6";
{\ar@{->}^{\varpi_{\varphi(a)}} "1";"2"}; 
{\ar@{->}^{\varpi_{\varphi(b)}} "1";"3"};
{\ar@{->}^{\Ctrl(\varphi_a)\inv} "2";"4"}; 
{\ar@{->}_{\Ctrl(\varphi_b)\inv} "3";"5"};
{\ar@{->}^{\varpi_a} "6";"4"}; 
{\ar@{->}^{\varpi_b} "6";"5"};
{\ar@{->}^{\Ctrl(G(\varphi)\sigma)} "2";"3"}; 
{\ar@{->}_{\Ctrl(\sigma)} "4";"5"}; 
\endxy,
\]
where $  \Ctrl_\Gamma(a)$ is an abbreviation for $\Ctrl(I(a))$, etc.
Consequently for any arrow $I(a)\stackrel{\sigma}{\to} I(b)\in G(\Gamma)$ the diagram
\begin{equation*}
\xy
(-20, -12)*+{\Ctrl_\Gamma(a)} ="1"; 
(20, -12)*+{\Ctrl_\Gamma(b)} ="2";
(0, 6)*+{\V(\Gamma') }="3";
{\ar@{->}_{\varpi_{\phi(a)}\circ \Ctrl(\phi_a)^{-1}} "3";"1"};
{\ar@{->}^{\Ctrl(\sigma)} "1";"2"};
{\ar@{->}^{\varpi_{\phi(b)}\circ \Ctrl(\phi_b)^{-1}} "3";"2"};
\endxy 
\end{equation*}
commutes.
By the universal property of the limit $\V(\Gamma)$ there exists
a unique linear map $\V(\varphi)\colon \V (\Gamma') \to \V (\Gamma)$
making the diagram
\begin{equation}\label{eq:3.3}
\xy
(-40,0)*+{\V (\Gamma')}="1";
(-20,12)*+{\Ctrl_{\Gamma'}(\varphi(a))}="2";
(-20,-12)*+{\Ctrl_{\Gamma'}(\varphi(b))}="3";
(20,12)*+{\Ctrl_{\Gamma}(a)}="4";
(20,-12)*+{\Ctrl_{\Gamma}(b)}="5";
(40,0)*+{\V (\Gamma)}="6";
{\ar@{->}^{\varpi_{\varphi(a)}} "1";"2"}; 
{\ar@{->}^{\varpi_{\varphi(b)}} "1";"3"};
{\ar@{->}^{\Ctrl(\varphi_a)\inv} "2";"4"}; 
{\ar@{->}_{\Ctrl(\varphi_b)\inv} "3";"5"};
{\ar@{->}^{\varpi_a} "6";"4"}; 
{\ar@{->}^{\varpi_b} "6";"5"};
{\ar@{..>}^{\V (\varphi)} "1";"6"};
\endxy.
\end{equation}
commute for all arrows $I(a)\stackrel{\sigma}{\to} I(b)\in G(\Gamma)$.
We leave it to the reader to  to check that
\[
\V (\psi \circ \varphi) = \V (\varphi) \circ \V (\psi)
\]
for any composable pair of \'etale maps of graphs
$\Gamma\stackrel{\varphi}{\to} \Gamma' \stackrel{\psi}{\to}\Gamma''$
over $C$, that is, that $\V$ is a contravariant functor (c.f.\ the
proof of Proposition~\ref{prop:5.1.11}).
\end{proof}

\begin{example} \label{ex:3} 
Consider the graphs $\Gamma$, $\Gamma'$ and the phase
  space function of Examples~\ref{ex:1} and \ref{ex:2}.
There is an evident \'etale map of graphs $
\varphi: \Gamma'\to \Gamma$,
\[
\Gamma' =\xy
(-20,5)*++[o][F]{1a}="1a"; 
(-20,-5)*++[o][F]{1b}="1b"; 
(0,0)*++[o][F]{2}="2"; 
{\ar@{->}^{ \alpha}  "1a";"2"};
{\ar@{->}_{ \beta}  "1b";"2"};
\endxy 
\quad\stackrel{\varphi}{\longrightarrow}\quad
 \Gamma = \xy
(-20,0)*++[o][F]{1}="1"; 
(0,0)*++[o][F]{2}="2"; 
{\ar@/_1.pc/_\beta "1";"2"};
{\ar@/^1.pc/^\alpha "1";"2"};
\endxy,
\]
defined on the nodes by 
\[
\varphi(1a) = \varphi(1b)  = 1,\quad \varphi(2) = 2.
\]
The induced map of groupoids $G(\varphi):G(\Gamma')\to G(\Gamma)$
sends $I(1a)$, $I(1b)$ to $I(1)$, $I(2)$ to $I(2)$, the isomorphism
$I(1a)\to I(1b)$ to the identity arrow on $I(1)$ and the non-trivial
arrow $\sigma:I(2)\to I(2)$ to $\sigma:I(2)\to I(2)$.  In particular
$G(\varphi)$ is surjective on objects.  Since $G(\varphi)$ is always
fully faithful (q.v.\ Lemma~\ref{lemma:ff}), it is, in this case, an
equivalence of categories.  Hence by Theorem~\ref{thm:ODEequiv},
\[
\V(\varphi): \V(\Gamma)\to \V(\Gamma')
\]
is an isomorphism of vector spaces.  This is not hard to check by
hand: under the identifications $\V(\Gamma) \simeq C^\infty (V\times
V, V)^{\ZZ_2}$ and $\V(\Gamma') \simeq C^\infty (V\times V,
V)^{\ZZ_2}$, the map $\V(\varphi)$ is the identity map:
\[
\V(\varphi) f = f
\]
for all $f\in C^\infty (V\times V, V)^{\ZZ_2}$.
\end{example}

\begin{remark} \label{rmrk:2.10.11}
Our construction of the functor $\V$ (Definition~\ref{def:vgivf} and
Theorem~\ref{thmA}) that assigns to each finite graph a space of
virtual groupoid-invariant vector fields is analogous to our
construction of the phase space functor $\PP$.  In both cases we have
a fixed category $\D$ with finite limits and a collection of
categories $\cU$; we then form a new category $\U$ with objects of the
form $U\stackrel{f}{\to}D$, where $U\in \cU$ and $f$ a functor, and
show that the assignment
\[
\U_0\to \D_0, \quad (U\stackrel{f}{\to}D)\mapsto \lim (U\stackrel{f}{\to}D)
\]
extends to a functor. 

In the case of $\PP$, we took $\D=\Euc$, the category of Euclidean
spaces and smooth maps, and $\cU$ as the collection of all finite sets
over a fixed set $C_0$.  In the case of $\V$, we took $\D=\Vect$, the
category of vector spaces and linear maps, and $\cU$ as the collection
of all groupoids of the form $G(\Gamma)$ with $\Gamma$ a finite graph
over the fixed graph of colors $C$.

The essential difference between the two constructions is that while
in the case of the phase space functor $\PP$ the morphisms of $\U$
are, roughly, commuting triangles  \[\xy
(-6, 6)*+{X} ="1"; 
(6, 6)*+{Y} ="2";
(0,-4)*+{\D }="3";
{\ar@{->}_{ \alpha} "1";"3"};
{\ar@{->}^{f} "1";"2"};
{\ar@{->}^{\beta} "2";"3"};
\endxy,
\]
 in the case of the construction of the functor $\V$ the morphisms are
 more complicated: the triangles of the form
\[
\xy
(-10, 10)*+{G(\Gamma)} ="1"; 
(10, 10)*+{G(\Gamma')} ="2";
(0,-4)*+{\Vect }="3";
{\ar@{->}_{ \Ctrl |_{G(\Gamma)}} "1";"3"};
{\ar@{->}^{G(\varphi)} "1";"2"};
{\ar@{->}^{\Ctrl|_{G(\Gamma')}} "2";"3"};
\endxy 
\]
no longer strictly commute.  Instead they ``2-commute''--- there is a
natural isomorphism
\[
\Ctrl |_{G(\Gamma)}\stackrel{\phi}{\Rightarrow}\Ctrl
|_{G(\Gamma')}\circ G(\varphi)
\]
--- this is the content of \eqref{eq:3.2}: the components of $\phi$
are linear isomorphisms $\Ctrl (\varphi_a)\inv$. A standard notation for such a
2-commutative diagram is:
\[
\xy
(-10, 10)*+{G(\Gamma)} ="1"; 
(10, 10)*+{G(\Gamma')} ="2";
(0,-6)*+{\Vect }="3";
{\ar@{->}_{ \Ctrl |_{G(\Gamma)}} "1";"3"};
{\ar@{->}^{G(\varphi)} "1";"2"};
{\ar@{->}^{\Ctrl|_{G(\Gamma')}} "2";"3"};
{\ar@{=>}_<<<{\scriptstyle \phi} (4,6)*{};(-2,2)*{}} ; 
\endxy .    
\]
A category-theoretic framework that encompasses both constructions is
discussed in section~\ref{sec:2slice}. Two interesting results that
follow from this framework are:
\end{remark}

\begin{theorem}\label{thm:ODEequiv}
  Suppose the map $G(\varphi)\colon G(\Gamma)\to G(\Gamma')$ induced by an
  \'etale map $\varphi\colon \Gamma\to \Gamma'$ of graphs over $C$ is
  essentially surjective.  Then the map
\[
\V (\varphi)\colon \V (\Gamma') \to \V (\Gamma)
\]
is an isomorphism.   

In particular, if $\bowtie$ is a balanced equivalence
relation~\cite{Golubitsky.Stewart.06} on $\Gamma$ and $\pi\colon
\Gamma\to \Gamma/{\bowtie}$ is the quotient map (q.v.\
Remark~\ref{rm:bowtie}), then
\[
\V (\pi)\colon \V (\Gamma/{\bowtie}) \to \V (\Gamma)
\]
is an isomorphism.\footnote{In the language of
  \cite{Golubitsky.Stewart.Torok.05} the graphs $\Gamma$ and
  $\Gamma/{\bowtie}$ are ``ODE equivalent.'' }
\end{theorem}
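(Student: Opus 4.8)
The plan is to deduce the isomorphism from the fact that $\V$ is constructed as a limit, together with the general principle that limits are invariant (up to canonical isomorphism) under precomposition with an equivalence of the indexing category. First I would recall from Lemma~\ref{lemma:ff} that the induced functor $G(\varphi)\colon G(\Gamma)\to G(\Gamma')$ is always fully faithful; under the hypothesis that it is also essentially surjective, it is therefore an equivalence of categories. Next I would invoke the fact, recorded in Remark~\ref{rmrk:2.10.11} and encoded in diagram \eqref{eq:3.2}, that the control functors fit into a natural isomorphism $\Ctrl|_{G(\Gamma)}\;\cong\;\Ctrl|_{G(\Gamma')}\circ G(\varphi)$, whose components are the linear isomorphisms $\Ctrl(\varphi_a)^{-1}$. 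Thus $\V(\Gamma)=\lim\big(\Ctrl|_{G(\Gamma)}\big)$ is the limit of a functor naturally isomorphic to $\Ctrl|_{G(\Gamma')}\circ G(\varphi)$.

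The heart of the argument is then the following standard lemma, which I would either cite from the categorical appendix or prove inline in a sentence: if $E\colon \mathcal{A}\to\mathcal{B}$ is an equivalence of categories and $H\colon\mathcal{B}\to\Vect$ is any functor for which $\lim H$ exists, then the canonical comparison map $\lim H\to\lim(H\circ E)$ (induced by the cone $\{H\circ E\}$ obtained by restricting the universal cone of $\lim H$ along $E$) is an isomorphism. The reason is that a quasi-inverse $E'$ of $E$ together with the natural isomorphisms $E'E\cong\mathrm{id}$ and $EE'\cong\mathrm{id}$ allows one to transport cones over $\mathcal{A}$ to cones over $\mathcal{B}$ and back, and one checks directly that these transports are mutually inverse on the level of the limiting objects. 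Applying this with $\mathcal{A}=G(\Gamma)$, $\mathcal{B}=G(\Gamma')$, $E=G(\varphi)$, and $H=\Ctrl|_{G(\Gamma')}$, and then correcting by the natural isomorphism from \eqref{eq:3.2}, gives that the map $\V(\varphi)\colon\V(\Gamma')\to\V(\Gamma)$ of \eqref{eq:3.3} is an isomorphism of vector spaces. I expect the main (though still routine) obstacle to be bookkeeping: verifying that the abstract comparison isomorphism from the lemma coincides with the specific map $\V(\varphi)$ defined via the universal property in diagram \eqref{eq:3.3}, which amounts to tracing the projections $\varpi_a$ and the maps $\Ctrl(\varphi_a)^{-1}$ through both constructions and appealing to uniqueness in the universal property of the limit.

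For the second assertion, I would simply combine the first part with Remark~\ref{rm:bowtie}: a balanced equivalence relation $\bowtie$ on $\Gamma$ yields an \'etale quotient map $\pi\colon\Gamma\to\Gamma/{\bowtie}$ whose induced functor $G(\pi)$ is not merely fully faithful but also surjective on objects, hence a fortiori essentially surjective. Thus the hypothesis of the first part is met, and $\V(\pi)\colon\V(\Gamma/{\bowtie})\to\V(\Gamma)$ is an isomorphism, recovering the ``ODE equivalence'' of Golubitsky--Stewart--T\"or\"ok in our framework. No further work is needed here beyond quoting the two inputs.
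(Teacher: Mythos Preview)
Your proposal is correct and follows essentially the same route as the paper: observe via Lemma~\ref{lemma:ff} that $G(\varphi)$ is fully faithful, hence an equivalence under the essential surjectivity hypothesis; invoke the natural isomorphism of control functors from \eqref{eq:3.2}; and then apply the general fact that limits are preserved under precomposition with an equivalence (the paper isolates this as Theorem~\ref{thm:iso} in Section~\ref{sec:2slice}, proved via the functor $L$ of Proposition~\ref{prop:5.1.11}, which also handles your bookkeeping concern that the abstract comparison map agrees with $\V(\varphi)$). The second assertion is handled identically, by appeal to Remark~\ref{rm:bowtie}.
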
 

\begin{empt}
For an input tree $I(a)$ of a graph $\Gamma$ over $C$ let 
\[
\Aut (I(a))= \Hom_{\Graph/C}(I(a), I(a)),
\]
the group of isomorphisms of a colored tree $I(a)$.  Then the skeleton
of a groupoid $G(\Gamma)$ (q.v. ~\ref{ex:groupoid skeleton}) is a
disjoint union of groups of the form
\[
\Aut (I(a_1))\sqcup\ldots \sqcup \Aut (I(a_k))
\]
for some nodes $a_1$, ..., $a_k$ of $\Gamma$. 
\end{empt}
\begin{theorem}\label{thm:inv-of-groups}
  For any colored graph $\Gamma\to C$ the space of virtual invariant
  vector fields $\V(\Gamma)$ is a product of invariants of groups.
In particular let 
\[
\Aut (I(a_1))\sqcup\ldots \sqcup \Aut (I(a_k))
\]
be a skeleton of the groupoid $G(\Gamma)$.  Then 
\[
\V(\Gamma) \simeq \prod_{i=1}^k \Ctrl(I(a_i))^{\Aut(I(a_i))}.
\]
\end{theorem}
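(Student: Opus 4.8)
The plan is to read off the result directly from the definition of $\V(\Gamma)$ as a limit (Definition~\ref{def:vgivf}), using two elementary facts about limits in $\Vect$: that a limit is unchanged when the indexing category is replaced by an equivalent one, and that a limit indexed by a disjoint union of categories is the product of the limits over the summands. First I would recall that every groupoid is equivalent to its skeleton, and that by~\ref{ex:groupoid skeleton} a skeleton of $G(\Gamma)$ has the form $\Aut(I(a_1))\sqcup\cdots\sqcup\Aut(I(a_k))$, a disjoint union of one-object groupoids, one for each isomorphism class of input trees of $\Gamma$; fix such representatives $a_1,\dots,a_k\in\Gamma_0$. The full subcategory $\mathcal{S}\subset G(\Gamma)$ spanned by $\{I(a_1),\dots,I(a_k)\}$ realizes this skeleton, and the inclusion $\iota\colon\mathcal{S}\hookrightarrow G(\Gamma)$ is fully faithful (it is a full subcategory) and essentially surjective (by definition of a skeleton), hence an equivalence of categories.

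Next I would invoke the categorical fact — one of the general statements about limits collected in Section~\ref{sec:2slice} — that for an equivalence $\iota\colon\mathcal{S}\to\mathcal{C}$ and any functor $F\colon\mathcal{C}\to\Vect$ the canonical comparison map $\lim F\to\lim(F\circ\iota)$ is an isomorphism. Applying this to $F=\Ctrl_\Gamma=\Ctrl|_{G(\Gamma)}$ gives
\[
\V(\Gamma)=\lim(\Ctrl_\Gamma)\;\cong\;\lim\bigl(\Ctrl_\Gamma|_{\mathcal{S}}\bigr).
\]
Since $\mathcal{S}$ is a disjoint union of the groupoids $\Aut(I(a_i))$ and there are no morphisms between distinct summands, the universal property of the limit forces $\lim\bigl(\Ctrl_\Gamma|_{\mathcal{S}}\bigr)\cong\prod_{i=1}^{k}\lim\bigl(\Ctrl_\Gamma|_{\Aut(I(a_i))}\bigr)$.

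Finally I would identify each factor. The restriction of $\Ctrl_\Gamma$ to the one-object groupoid $\Aut(I(a_i))$ is precisely a linear representation of the group $\Aut(I(a_i))$ on the vector space $\Ctrl(I(a_i))$, and the limit of such a representation is, by Definition~\ref{def:invariants} and the discussion following it, the subspace $\Ctrl(I(a_i))^{\Aut(I(a_i))}$ of invariants. Chaining the isomorphisms above yields $\V(\Gamma)\simeq\prod_{i=1}^{k}\Ctrl(I(a_i))^{\Aut(I(a_i))}$, which in particular exhibits $\V(\Gamma)$ as a product of invariants of groups.

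I expect the real work here to be bookkeeping rather than conceptual: one must make the equivalence $\iota$ and the induced isomorphism of limits concrete enough to be sure the composite identification is the natural one and is independent of the choices made (the representatives $a_i$ and the quasi-inverse of $\iota$), and one should check that restricting $\Ctrl_\Gamma$ to a single object of $G(\Gamma)$ genuinely yields an honest group representation on $\Ctrl(I(a_i))$ rather than something formally weaker. The only ingredient imported from outside the unwinding of definitions is the invariance of limits under equivalence of the indexing category; I would state that lemma explicitly (or cite it from Section~\ref{sec:2slice}), after which the remaining steps are routine.
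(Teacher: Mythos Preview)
Your proposal is correct and follows essentially the same route as the paper: include the skeleton $\bigsqcup_{i=1}^k\Aut(I(a_i))\hookrightarrow G(\Gamma)$ as an equivalence of categories, apply Theorem~\ref{thm:iso} to conclude that restricting $\Ctrl$ along this inclusion does not change the limit, and then read off the product decomposition from Remark~\ref{rmrk:A.2.13}. The only cosmetic difference is that the paper cites~\ref{rmrk:A.2.13} directly for the identification $\lim(\Ctrl|_{\bigsqcup_i\Aut(I(a_i))})=\prod_i\Ctrl(I(a_i))^{\Aut(I(a_i))}$, whereas you split this into ``limit over a disjoint union is a product of limits'' followed by ``limit of a one-object representation is the invariants''; both are trivially equivalent.
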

\noindent
The proofs of \ref{thm:ODEequiv} and \ref{thm:inv-of-groups} are given
in Section~\ref{sec:2slice}.

\begin{remark}
  Note that while the spaces $\V(\Gamma)$, $\V(\Gamma')$ of virtual
  groupoid-invariant vector fields in the Example~\ref{ex:3} above are
  naturally isomorphic, there is no evident natural isomorphism
  between their images $S_\Gamma (\V(\Gamma))$ and $S_{\Gamma'}
  (\V(\Gamma'))$ in the spaces of vector fields $\chi (\PP \Gamma_0)$
  and $\chi(\PP\Gamma'_0)$ respectively.  Indeed, as we computed in
  Example~\ref{ex:2},
\[
S_\Gamma (\V(\Gamma)) =\{ (0, g) \in C^\infty (V^2, V^2)
\mid g(v_1, v_2) = f(v_1, v_1)
\textrm{ for some } f\in C^\infty (V\times V, V)^{\ZZ_2}\}
\]
while
\[
S_{\Gamma'} (\V(\Gamma')) =\{ (0, 0, g) \in C^\infty (V^3, V^3)
\mid g(v_{1a}, v_{1b}, v_2) = f(v_{1a}, v_{1b})
\textrm{ for some } f\in C^\infty (V\times V, V)^{\ZZ_2}\}.
\]
As we have seen in Example~\ref{ex:2}, $S_{\Gamma'}$ is injective. So its image is isomorphic to $C^\infty (V\times V, V)^{\ZZ_2}$:
\[
S_{\Gamma'} (\V(\Gamma')) \simeq C^\infty (V\times V, V)^{\ZZ_2}.
\]  
On the other hand the map
\[
C^\infty (V\times V, V)^{\ZZ_2} \to C^\infty (V,V), f\mapsto (g(v) = f(v,v))
\]
 is surjective.  Indeed for any $g\in C^\infty (V,V)$ 
\[
g(v) = f(v,v) 
\]
where $f(u,w) = \frac{1}{2} (g(u) + g(w))$.  Consequently 
\[
S_\Gamma (\V(\Gamma)) \simeq C^\infty (V,V),
\]
and there is no natural isomorphism between $C^\infty (V,V)$ and
$C^\infty (V\times V, V)^{\ZZ_2}$.  For this reason we are confused by
the definition of the pullback map $\beta^*$ on p.~332 of
\cite{Golubitsky.Stewart.06}.  Consequently we do not fully understand
Definition~6.1 (op.\ cit.) of $G$-admissible vector fields.

While there are no natural geometric or combinatorial maps between the
{\em images} $S_\Gamma (\V(\Gamma))$ and $S_{\Gamma'} (\V(\Gamma'))$
for any \'etale map of graphs $\varphi:\Gamma' \to \Gamma$ there is a
very interesting relationship between the maps $\V(\varphi)$, $\PP
\varphi$, $S_\Gamma$ and $S_{\Gamma'}$.  This relationship is the
subject of the next two subsections.
\end{remark}

\subsection{Virtual groupoid invariant vector fields and dynamics on Euclidean spaces}\label
{sec:sync}
\mbox{}\\[-8pt]

\noindent
Let $\varphi\colon \Gamma \to \Gamma'$ be an \'etale map of finite
graphs over our graph of colors $C$.  Let $w\in \V(\Gamma')$ be a
groupoid-invariant virtual vector field on the phase space
$\PP\Gamma'_0$.  Then $S_{\Gamma'} w$ is an actual vector field on
$\PP\Gamma'_0$.  Since the phase space functor is contravariant, we
have a smooth map of phase spaces $\PP \varphi\colon \PP\Gamma'_0\to
\PP \Gamma_0$.  Since the functor $\V$ is contravariant, we have a
virtual vector field $\V (\varphi)w \in \V(\Gamma)$, which gives us an
actual vector field $S_\Gamma (\V (\varphi)w)$ on $\PP\Gamma_0$.
Remarkably the vector fields $S_{\Gamma'} w $ and $S_\Gamma (\V
(\varphi)w)$ are $\PP\varphi$-related, which is the content of the
Theorem~\ref{thm:sync} below.

\begin{remark}
Note that if $\Gamma'$ is the quotient of the graph $\Gamma$ by a
balanced equivalence relation and $\varphi $ is the quotient map, then
$\PP\varphi$ identifies $\PP\Gamma'$ with a ``polydiagonal'' subspace
of $\PP\Gamma$.  The fact that the vector fields $S_{\Gamma'} w $ and
$S_\Gamma (\V (\varphi)w)$ are $\PP\varphi$-related translates into:
``the restriction of $S_\Gamma (\V (\varphi)w)$ to the polydiagonal is
$S_{\Gamma'} w $.''  Thus Theorem~\ref{thm:sync} generalizes the
synchrony results of \cite{Golubitsky.Pivato.Stewart.04} and
\cite{Golubitsky.Stewart.Torok.05} from quotient maps of colored
graphs to arbitrary \'etale maps of colored graphs.
\end{remark}
\begin{theorem}\label{thm:sync}
Let $w\in \V(\Gamma')$ be a virtual dynamical system on $\PP\Gamma_0$.
For each \'etale map of graphs
\[
\varphi\colon  \Gamma \to \Gamma',
\]
 the diagram
\begin{equation}\label{eq:4.1}
\xymatrix{
 \PP(\Gamma'_0) \ar[r]^{D \PP\varphi } 
&   \PP(\Gamma_0)   \\
\PP(\Gamma'_0) \ar[r]^{\PP \varphi}  \ar[u]^{S_{\Gamma'}(w)}
&  \PP(\Gamma_0)\ar[u]_{S_\Gamma(\V (\varphi)w)} }
\end{equation}
commutes: for any point $x\in \PP (\Gamma'_0)$
\begin{equation}\label{eq:1.36}
D \PP\varphi (x) \left( S_{\Gamma'}(w) \, (x) \right)=
 S_\Gamma(\V (\varphi)w)\,(\PP\varphi (x)).
\end{equation}
In other words, 
\[
\PP (\varphi)\colon  (\PP(\Gamma'), S_{\Gamma'}(w))\to  (\PP(\Gamma), S_\Gamma(\V (\varphi)w))
\]
is a map of dynamical systems.
\end{theorem}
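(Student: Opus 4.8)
The plan is to reduce the $\PP\varphi$‑relatedness assertion~\eqref{eq:1.36} to a plain equality of smooth maps between Euclidean spaces, and then to verify that equality one coordinate at a time by unwinding the universal properties that define $S_\Gamma$ and $\V(\varphi)$, the whole argument being powered by the naturality of the input‑tree construction $a\mapsto I(a)$. First I would get rid of the derivative in~\eqref{eq:1.36}: by Remark~\ref{rmrk:Pf-is-linear} the map $\PP\varphi\colon\PP\Gamma'_0\to\PP\Gamma_0$ is linear, so under the canonical trivialization of tangent bundles of Euclidean spaces (\ref{empt:euc}) its differential at every point is again $\PP\varphi$. Regarding $S_{\Gamma'}(w)\in\Hom_\Euc(\PP\Gamma'_0,\PP\Gamma'_0)$ and $S_\Gamma(\V(\varphi)w)\in\Hom_\Euc(\PP\Gamma_0,\PP\Gamma_0)$ under the identification $\chi(W)\cong C^\infty(W,W)$ of~\ref{not:chi}, the commutativity of~\eqref{eq:4.1} becomes exactly the identity
\[
\PP\varphi\circ S_{\Gamma'}(w)\;=\;S_\Gamma(\V(\varphi)w)\circ\PP\varphi
\]
of smooth maps $\PP\Gamma'_0\to\PP\Gamma_0$.

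Next, since $\PP\Gamma_0=\prod_{a\in\Gamma_0}\PP\{a\}$, it suffices to check this after composing with each canonical projection $\mathrm{pr}_a\colon\PP\Gamma_0\to\PP\{a\}$, $a\in\Gamma_0$. I would record three unwindings. (i) From~\eqref{eq:PPf}, $\mathrm{pr}_a\circ\PP\varphi=\mathrm{pr}'_{\varphi(a)}$, the projection of $\PP\Gamma'_0$ onto $\PP\{\varphi(a)\}$, where $\PP\{\varphi(a)\}=\PP\{a\}$ because $\varphi$ is a morphism over $C$. (ii) From the defining diagram~\eqref{eq:2.2} (for $\Gamma$, and likewise for $\Gamma'$), $\mathrm{pr}_a\circ S_\Gamma(v)=\varpi_a(v)\circ\PP(\xi|_{\lv I(a)})$. (iii) From~\eqref{eq:3.3}, $\varpi_a(\V(\varphi)w)=\Ctrl(\varphi_a)^{-1}\bigl(\varpi_{\varphi(a)}(w)\bigr)$, and from~\eqref{eq:1.18} the inverse $\Ctrl(\varphi_a)^{-1}$ is precomposition by the diffeomorphism $\PP(\varphi_a|_{\lv I(a)})^{-1}$. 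Combining (ii) and (iii) rewrites $\mathrm{pr}_a\circ S_\Gamma(\V(\varphi)w)\circ\PP\varphi$ as $\varpi_{\varphi(a)}(w)\circ\PP(\varphi_a|_{\lv I(a)})^{-1}\circ\PP(\xi|_{\lv I(a)})\circ\PP\varphi$.

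The one structural input is then the commuting square of graphs over $C$
\[
\xymatrix{
I(a)\ar[r]^{\varphi_a}\ar[d]_{\xi} & I(\varphi(a))\ar[d]^{\xi'}\\
\Gamma\ar[r]_{\varphi} & \Gamma'
}
\]
which holds by the very definition~\eqref{eq:varphi_a} of $\varphi_a$ together with the fact that $\varphi$ intertwines source maps; this is precisely where \'etaleness is used, since only then is $\varphi_a$ an isomorphism and $\PP(\varphi_a|_{\lv I(a)})$ invertible. Restricting to leaf sets and applying the contravariant functor $\PP$ gives $\PP(\xi|_{\lv I(a)})\circ\PP\varphi=\PP(\varphi_a|_{\lv I(a)})\circ\PP(\xi'|_{\lv I(\varphi(a))})$. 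Substituting this into the expression from the previous step, the factors $\PP(\varphi_a|_{\lv I(a)})^{-1}$ and $\PP(\varphi_a|_{\lv I(a)})$ cancel, leaving $\varpi_{\varphi(a)}(w)\circ\PP(\xi'|_{\lv I(\varphi(a))})$, which by~\eqref{eq:2.2} for $\Gamma'$ equals $\mathrm{pr}'_{\varphi(a)}\circ S_{\Gamma'}(w)=\mathrm{pr}_a\circ\PP\varphi\circ S_{\Gamma'}(w)$. As this holds for every $a\in\Gamma_0$, the displayed identity follows, and with it the theorem.

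I do not expect a deep difficulty: the content is already contained in the universal properties of $S$ and $\V$ and in the definition of the induced maps $\varphi_a$. The main friction will be bookkeeping — there are two nested layers of contravariance ($\PP$ and the formation of $\V$), and one must correctly recognize $\Ctrl(\varphi_a)^{-1}$ as precomposition by the diffeomorphism $\PP(\varphi_a|_{\lv I(a)})^{-1}$ before the cancellation step can be carried out cleanly.
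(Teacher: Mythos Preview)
Your proposal is correct and follows essentially the same route as the paper: reduce to an equality of smooth maps via linearity of $\PP\varphi$, project onto each factor $\PP\{a\}$, invoke the defining diagrams~\eqref{eq:2.2} and~\eqref{eq:3.3}, and use the naturality square $\xi'\circ\varphi_a=\varphi\circ\xi$ to effect the cancellation. The only cosmetic difference is that the paper phrases the projections via $D\PP\kappa_a$ and carries the differential through to the end rather than eliminating it at the outset.
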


\begin{remark}\label{rmrk:2.11.5}
  Since the map $\PP \varphi$ above is linear (q.v.
  \ref{rmrk:Pf-is-linear}), $D\PP\varphi (x) = \PP \varphi$ for all
  $x\in \PP (\Gamma'_0)$.  Therefore \eqref{eq:1.36} amounts to:
\begin{equation}\label{eq:1.36+}
 \PP\varphi  \left( S_{\Gamma'}(w) \, (x) \right)=
 S_\Gamma(\V (\varphi)w)\,(\PP\varphi (x)) 
\quad \mbox{ for any point }x\in \PP (\Gamma'_0).
\end{equation}
\end{remark}

\begin{proof}[Proof of Theorem~\ref{thm:sync}]
  Recall that for each node $a$ of a graph $\Gamma$ over the graph of
  colors $C$ we have a canonical map $\xi\colon I(a)\to \Gamma$ from
  the input tree $I(a)$ of the node $a\in \Gamma_0$ to the graph
  $\Gamma$ (q.v. \ref{empt:xi}; we use the same letter $\xi$ for all
  nodes of $\Gamma$ and for all graphs over $C$ suppressing both
  dependencies and trusting that lighter notation will cause no
  confusion).

  If $\varphi\colon \Gamma\to \Gamma'$ is an {\em \'etale} map of
  graphs over $C$, then by the definition of ``\'etale'', for each
  node $a$ of $\Gamma$, we have an induced {\em isomorphism} of graphs
  over $C$:
\[
  \varphi_a\colon  I(a) \to I(\varphi(a))
\]
defined by \eqref{eq:varphi_a}.
Note that $\varphi_a$ necessarily sends the root $a$ of $I(a)$ to the
root $\varphi(a)$ of $I(\varphi(a))$ and the leaves $\lv I(a)$
bijectively to the leaves $\lv I(\varphi(a))$ of $I(\varphi(a))$.
Moreover, the diagram of graphs over $C$
\begin{equation}\label{eq:(1)}
\xy
(-14, 10)*+{I(a)}="1"; 
(14,10)*+{\Gamma}="2";
(-14, -10)*+{I(\varphi(a))}="4"; 
(14,-10)*+{\Gamma'}="5";
{\ar@{->}^{\xi} "1";"2"};
{\ar@{->}^{\varphi_a} "1";"4"};
{\ar@{->}_{\varphi} "2";"5"};
{\ar@{->}_{\xi} "4";"5"};
\endxy
\end{equation}
commutes.  Hence we have a commuting diagram
\begin{equation}\label{eq:(1')}
\xy
(-14, 10)*+{\lv I(a)}="1"; 
(14,10)*+{\Gamma_0}="2";
(-14, -10)*+{\lv I(\varphi(a))}="4"; 
(14,-10)*+{\Gamma'_0}="5";
{\ar@{->}^{\xi} "1";"2"};
{\ar@{->}^{\varphi_a} "1";"4"};
{\ar@{->}_{\varphi} "2";"5"};
{\ar@{->}_{\xi} "4";"5"};
\endxy
\end{equation}
of finite sets over $C_0$ and consequently, since $\PP$ is a functor,
the commuting diagram
\begin{equation}\label{eq:(1'')}
\xy
(-14, 10)*+{\PP\lv I(a)}="1"; 
(14,10)*+{\PP\Gamma_0}="2";
(-14, -10)*+{\PP\lv I(\varphi(a))}="4"; 
(14,-10)*+{\PP\Gamma'_0}="5";
{\ar@{->}_{\PP \xi} "2";"1"};
{\ar@{->}^{\PP \varphi_a} "4";"1"};
{\ar@{->}_{\PP \varphi} "5";"2"};
{\ar@{->}_{\PP\xi} "5";"4"};
\endxy
\end{equation}
in our category $\Euc$ of Euclidean phase spaces.
Let 
\begin{equation}\label{eq:kappa}
\kappa_a\colon  \{a\} \hookrightarrow \Gamma_0
\end{equation}
denote the canonical inclusion in the category $\FinSet/C_0$ of finite sets over
$C_0$.  We then have a commuting diagram in $\Finset/C_0$
\begin{equation}\label{eq:(2)}
\xy
(-14, 10)*+{\{a\}}="1"; 
(14,10)*+{\Gamma_0}="2";
(-14, -10)*+{\{\varphi(a)\}}="4"; 
(14,-10)*+{\Gamma'_0,}="5";
{\ar@{^{(}->}^{\kappa_a} "1";"2"};
{\ar@{->}_{\varphi_a|_{\{a\}}= \varphi|_{\{a\}}} "1";"4"};
{\ar@{->}^{\varphi} "2";"5"};
{\ar@{^{(}->}_{\kappa_{\varphi(a)}} "4";"5"};
\endxy
\end{equation}
hence a commuting diagram in $\Euc$:
\begin{equation}\label{eq:(3)}
\xy
(-14, 10)*+{\PP \{a\}}="1"; 
(14,10)*+{\PP \Gamma_0}="2";
(-14, -10)*+{\PP \{\varphi(a)\}}="4"; 
(14,-10)*+{\PP \Gamma'_0,}="5";
{\ar@{->>}_{\PP \kappa_a} "2";"1"};
{\ar@{->}^{\PP (\varphi|_{ \{a\}})} "4";"1"};
{\ar@{->}_{\PP \varphi} "5";"2"};
{\ar@{->>}^{\PP \kappa_{\varphi(a)}} "5";"4"};
\endxy
\end{equation}
Differentiating \eqref{eq:(3)} we get the commuting diagram
\begin{equation}\label{eq:(4)}
\xy
(-14, 10)*+{T\PP \{a\}}="1"; 
(14,10)*+{T\PP \Gamma_0}="2";
(-14, -10)*+{T\PP \{\varphi(a)\}}="4"; 
(14,-10)*+{T\PP \Gamma'_0,}="5";
{\ar@{->>}_{D\PP \kappa_a} "2";"1"};
{\ar@{->}^{D\PP (\varphi|_{ \{a\}})} "4";"1"};
{\ar@{->}_{D\PP \varphi} "5";"2"};
{\ar@{->>}^{D\PP \kappa_{\varphi(a)}} "5";"4"};
\endxy
\end{equation}
of tangent bundles.
Recall that the projection 
\[
p_a\colon  \chi (\PP \Gamma_0) \to \Hom_\Euc (\PP \Gamma_0, \PP \{a\})
\]
from~(\ref{eq:defofpa}) is given by
\begin{equation}\label{eq:pakappaa}
p_a (X) = D (\PP \kappa_a) \circ X.
\end{equation}
By definition of the maps $S_\Gamma$ and $S_{\Gamma'}$ (q.v.
\eqref{eq:2.2}) we have
\begin{equation}\label{eq:4*}
D\PP \kappa _a \circ S_\Gamma (v) = \varpi_a (v)\circ \PP (\xi|_{\lv I(a)}) 
\quad
\textrm{ for any $a\in \Gamma_0$ and any }v\in \V(\Gamma) 
\end{equation}
and
\begin{equation}\label{eq:4*'}
  D\PP \kappa _{\varphi(a)} \circ S_{\Gamma'} (v') 
  = \varpi_{\varphi (a)}(v') \circ \PP \xi|_{\lv I(\varphi (a))} \quad
  \textrm{ for any $a\in \Gamma_0$ and any }v'\in \V(\Gamma'). 
\end{equation}
By definition \eqref{eq:3.3} of $\V\varphi$
\begin{equation}\label{eq4:alpha}
\varpi_a (\V(\varphi)w) = \Ctrl (\varphi_a)\inv (\varpi_{\varphi (a)} w)
\end{equation}  
for any virtual invariant vector field $w\in \V(\Gamma')$ and
any node $a\in \Gamma_0$. 
By definition of $\Ctrl(\varphi_a)$ (q.v. \eqref{eq:1.17}, \eqref{eq:1.18})
\[ 
\Ctrl (\varphi_a)\inv u = u\circ (\PP \varphi_a)\inv \quad \textrm{ for any }
u\in \Ctrl (I(\varphi(a))).
\]
Hence 
\[
\varpi_a (\V (\varphi)w) = (\varpi_{\varphi(a)}w)\circ (\PP \varphi_a)\inv.
\]
Consequently
\begin{equation}\label{eq4:2alpha}
\varpi_a(\V(\varphi)w) \circ \PP \varphi_a =  \varpi_{\varphi(a)}w.
\end{equation}
Since the vector space of vector fields $\chi (\PP\Gamma_0)$ is
the product
\[
\chi (\PP\Gamma_0)= \prod_{a\in \Gamma_0} \Hom_\Euc (\PP \Gamma_0, \PP \{a\}),
\]
the diagram \eqref{eq:4.1} commutes if and only if 
\[
p_a (S_\Gamma (\V(\varphi) w) \circ \PP \varphi ) = 
p_a (D\PP \varphi \circ S_{\Gamma'} (w))
\]
for every $a\in \Gamma_0$.  We now compute:
\begin{eqnarray*}
p_a (S_\Gamma (\V(\varphi) w) \circ \PP \varphi )
  &= &  D\PP\kappa_a \circ S_\Gamma (\V (\varphi) w) \circ \PP\varphi, 
  \quad \textrm{ by (\ref{eq:pakappaa})}\\
&= & 
\left(\varpi_a(\V(\varphi)w)\circ \PP (\xi|_{\lv I(a)})\right)
\circ \PP\varphi, 
  \quad \textrm{ by (\ref{eq:4*})} \\
  & = &\varpi_a(\V(\varphi)w)\circ 
  \left(\PP (\xi|_{\lv I(a)}) \circ \PP\varphi\right)\\
  &=& \varpi_a(\V(\varphi)w)\circ \PP \varphi_a \circ 
\PP (\xi|_{\lv I(\varphi(a))})
  \quad \textrm{ by \eqref{eq:(1'')} }\\
  & = &  \varpi_{\varphi(a)} w
  \circ \PP(\xi|_{\lv I(\varphi(a))}),\quad \textrm{ by (\ref{eq4:2alpha})}\\ 
  &=&D\PP\kappa_{\varphi(a)} 
  \circ S_{\Gamma'} (w)  \quad \textrm{ by (\ref{eq:4*'})}\\ 
 &=& D \PP (\varphi|_{\{a\}}) \circ D\PP\kappa_{\varphi(a)} 
  \circ S_{\Gamma'} (w) \quad \textrm{since } D \PP (\varphi|_{\{a\}}) =
D(id_{\PP \{a\}})  =id\\
&=&D\PP \kappa_a \circ D\PP \varphi \circ  S_{\Gamma'} (w) ,
  \quad \textrm{since \eqref{eq:(4)} commutes} \\ 
  &=& p_a (D\PP \varphi ( S_{\Gamma'} (w))).
\end{eqnarray*}
And we are done.
\end{proof}

\begin{example}\label{ex:1rev}
  Here we revisit Examples~\ref{ex:1}, \ref{ex:2} and \ref{ex:3}.
  Recall that we have an \'etale map of (mono-colored) graphs
  $\varphi:\Gamma'\to \Gamma$:
\[
\Gamma' =\xy
(-20,5)*++[o][F]{1a}="1a"; 
(-20,-5)*++[o][F]{1b}="1b"; 
(0,0)*++[o][F]{2}="2"; 
{\ar@{->}^{ \alpha}  "1a";"2"};
{\ar@{->}_{ \beta}  "1b";"2"};
\endxy 
\quad\stackrel{\varphi}{\longrightarrow}\quad
 \Gamma = \xy
(-20,0)*++[o][F]{1}="1"; 
(0,0)*++[o][F]{2}="2"; 
{\ar@/_1.pc/_\beta "1";"2"};
{\ar@/^1.pc/^\alpha "1";"2"};
\endxy,
\]
and the phase space function $\cP (\circ) = V$.  Then $\PP \Gamma_0 =
V\times V=: V^2$, $\PP (\Gamma'_0) = V\times V \times V =:V^3$ and $\PP \varphi: V^2 \to V^3$ is given by 
\[
(\PP \varphi)(v_1, v_2) = (v_1, v_1, v_2)
\]
(cf.\ Examples~\ref{ex:2.3.6} and \ref{ex:2.3.7}).  Recall that
$\V(\Gamma) \simeq \C^\infty(V\times V,V)^{\ZZ_2}\simeq \V(\Gamma')$ and that 
under these identifications 
\[
\V (\varphi):\V(\Gamma) \to \V(\Gamma')
\]
is the identity map.  Recall also that for any $f\in \C^\infty(V^2 ,V)^{\ZZ_2}$
\[
S_\Gamma (f) (v_1, v_2) = (0, f(v_1, v_1))
\]
while 
\[
S_{\Gamma'} (f) (v_{1a}, v_{1b}, v_2) = (0, f(v_{1a}, v_{1b})).
\]
Consequently 
\[
((\PP \varphi) \circ (S_\Gamma f)) \, (v_1, v_2) = (0, 0, f(v_1, v_1)) =
((S_{\Gamma'} f) \circ (\PP \varphi)) \, (v_1, v_2),
\]
which is precisely the assertion of Theorem~\ref{thm:sync}.  Thus the
``polydiagonal'' $\PP \varphi (V^2) \subset V^3$ is an invariant
submanifold for any ``groupoid-invariant'' vector field $v\in
S_{\Gamma'} (\V (\Gamma'))\subset \chi (V^3)$.
\end{example}

\subsection{The  combinatorial category  $\cV^\Euc$ of dynamical systems}
\label{sec:VEuc}\mbox{}\\[-8pt]

\noindent
Having constructed a contravariant functor $\V\colon
\opet{(\FinGraph/C)} \to \Vect$ that assigns to each $C$-colored graph
$\Gamma$ the vector space of virtual groupoid-invariant vector fields
on the phase space $\PP \Gamma_0$, we now construct a category
$\cV^\Euc$ whose set of objects is the collection of all
groupoid-invariant vector fields $\bigsqcup_{\Gamma \in \FinGraph/C}
\V(\Gamma)$.  This construction is, incidentally, an example of the
construction of the category of elements of a presheaf.

\begin{definition}[The  combinatorial category $\cV^\Euc$ of dynamical systems] 
  Fix a graph of colors $C=\{C_1\toto C_0\}$ and a phase space
  function $\cP\colon C_0 \to \Euc$.  Let $\PP\colon \FinSet/C_0 \to
  \Euc$ be the associated phase space functor \eqref{eq:ph-s-funct}
  and $\V\colon \opet{(\FinGraph/C)} \to \Vect$ the corresponding
  virtual vector fields functor.
\begin{enumerate}
\item The collection of objects $({\cV^\Euc})_0$ is the collection of pairs
  \[ 
({\cV^\Euc})_0 := \{ (v,\Gamma) \mid \Gamma \in (\FinGraph/C)_0,
  v\in \V(\Gamma)\}=
\bigsqcup_{\Gamma \in (\FinGraph/C)_0} \V (\Gamma) .
\]
\item For any two objects $(v,\Gamma), (v', \Gamma')\in {\cV^\Euc}_0$, the set
  of morphisms $\Hom_{\cV^\Euc} ((v,\Gamma), (v', \Gamma'))$ is given by
\[
\Hom_{\cV^\Euc} ((v,\Gamma), (v', \Gamma')) := \left\{(h,(v,\Gamma),
(v', \Gamma') )\in \Hom_{\et{(\FinGraph/C)}}(\Gamma,\Gamma')\times
{(\cV^\Euc)}_0 \times {(\cV^\Euc)}_0 \mid \V(h)v = v'\right\}.
\]
\end{enumerate}
Composition in $\cV^\Euc$ is defined by
\[
(g, (v', \Gamma'), (v'', \Gamma''))\circ (h, (v,\Gamma), (v',\Gamma'))=
(hg, (v,\Gamma), (v'', \Gamma''))
\]
for all $\Gamma\stackrel{h}{\leftarrow} \Gamma' \stackrel{g}{\leftarrow}\Gamma''$
and $v\in \V\Gamma, v'\in \V\Gamma', v''\in \Gamma''$ with $\V(g)v' =
v''$ and $\V(h)v = v'$.
\end{definition}

\begin{remark}
  There is an evident functor $\pi\colon \cV^\Euc \to
  \opet{(\FinGraph/C)}$.  On objects $\pi( v, \Gamma) = \Gamma$ and on
  morphisms
\[
\pi \left((v,\Gamma)\xrightarrow{(h,(v,\Gamma),
  (v',\Gamma'))} (v',\Gamma')\right) = \Gamma
\stackrel{h}{\leftarrow} \Gamma' \in
\Hom_{\opet{(\FinGraph/C)}}(\Gamma, \Gamma').
\]
\end{remark}

As we mention in the introduction, there is a category
$\mathsf{Dynamical Systems}$ whose objects are pairs $(M,X)$ where $M$
is a manifold and $X$ a vector field on $M$.  A morphism
$\varphi\colon (M,X) \to (M', X')$ in $\mathsf{Dynamical \,\, Systems}$ is a
smooth map of manifolds $\varphi\colon M\to M'$ with
\[
d\varphi\circ X = X'\circ \varphi,
\]
that is, a map of control systems (q.v.\ \ref{def:Maps of control
  systems}).  There is an evident functor 
\[
U\colon  \mathsf{Dynamical \,\, Systems} \to \Man
\]
 from the category of dynamical systems to the
category of manifolds that forgets the vector fields.  The category
$\mathsf{Dynamical  \,\,Systems}$ has a full subcategory $\mathsf{Dynamical \,\,
  Systems}_\Euc$ whose objects are pairs $(W, X)$ where $W$ is a
Euclidean space. By construction
\[
U (\mathsf{Dynamical \,\,  Systems}_\Euc) = \Euc.
\]
Theorem~\ref{thm:sync} allows us to reinterpret the collection of maps 
$\{ S_\Gamma\colon  \V\Gamma \to \chi (\PP \Gamma_0)\}_{\Gamma\in (\FinGraph/C)_0}$ as a 
functor
\[
S\colon  \cV^\Euc \to \mathsf{Dynamical \,\, Systems}_\Euc.
\]
Indeed, given an object $(v,\Gamma)\in \cV^\Euc$ we define
\[
S(v,\Gamma) = (\PP \Gamma_0, S_\Gamma (v)),
\]
where, as before, $\PP$ denotes the phase space functor and
$S_\Gamma\colon \V(\Gamma)\to \chi (\PP \Gamma_0)$ is defined by
\eqref{eq:2.2} (see also \ref{not:chi}).  Given a morphism
$(h,(v,\Gamma), (v', \Gamma'))\in \cV_\Euc$ we define
\[
S(h,(v,\Gamma), (v', \Gamma')) = (\PP \Gamma_0, S_\Gamma(v))
\stackrel{\PP h}{\to} (\PP\Gamma', S_{\Gamma'} (v)).
\]
By Theorem~\ref{thm:sync}
\[
D\PP h \left(S_\Gamma (v))\right) = S_{\Gamma'}(v') \circ \PP h.
\]
Therefore
\[
\PP h\colon (\PP \Gamma_0, S_\Gamma (v)) \to (\PP \Gamma'_0, S_{\Gamma'}(v'))
\]
is a morphism of dynamical systems.  
We have thus proved:
\begin{theorem}\label{thm:2.12.3}
  Let $\cV^\Euc$ denote the category of virtual groupoid-invariant
  dynamical systems constructed above, $\pi\colon \cV^\Euc \to
  \opet{(\FinGraph/C)}$ the canonical projection, $U\colon
  \mathsf{Dynamical \,\,Systems}_\Euc \to \Euc$ the forgetful functor,
  and $\PP$ the phase space functor extended to finite graphs (q.v.\
  \ref{rmrk:PonFinGraph}). There exists a functor $S\colon \cV^\Euc
  \to \mathsf{Dynamical \,\, Systems}_\Euc$ making the diagram
\begin{equation}\label{eq:star-Euc}
\xymatrix{
\cV^\Euc  \ar[r]^{S \quad\quad } \ar[d]_{\pi}
&  \mathsf{Dynamical \,\,Systems}_\Euc \ar[d]^{U}  \\
 \opet{(\FinGraph/C)}  \ar[r]^{\quad\PP }
&  \Euc }
\end{equation}
commute.  On objects
\[
S(v,\Gamma) = (\PP \Gamma_0, S_\Gamma (v));
\]
on arrows
\[
S(h,(v,\Gamma), (v', \Gamma')) = (\PP \Gamma_0, S_\Gamma(v))
\stackrel{\PP h}{\to} (\PP\Gamma', S_{\Gamma'} (v)).
\]
\end{theorem}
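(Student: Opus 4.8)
The plan is to take the two displayed formulas in the statement as the definition of $S$ and to check, in order, that $S$ is well defined on objects, well defined on arrows, functorial, and compatible with $U$, $\PP$, and $\pi$; the one nontrivial ingredient is Theorem~\ref{thm:sync}, which I may assume, so everything else amounts to tracking variances. On objects there is nothing to verify: for $(v,\Gamma)\in(\cV^\Euc)_0$ the graph $\Gamma$ is finite, so $\PP\Gamma_0$ is a finite-dimensional Euclidean space and $S_\Gamma(v)\in\chi(\PP\Gamma_0)$, whence $S(v,\Gamma)=(\PP\Gamma_0,S_\Gamma(v))$ is an object of $\mathsf{Dynamical \,\, Systems}_\Euc$, and $U(S(v,\Gamma))=\PP\Gamma_0=\PP(\pi(v,\Gamma))$ by definition.

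The substantive step is well-definedness on morphisms. Unwinding the definition of $\cV^\Euc$, a morphism $(v,\Gamma)\to(v',\Gamma')$ has underlying datum an \'etale map $h\colon\Gamma'\to\Gamma$ of graphs over $C$ together with the requirement $\V(h)v=v'$; here the \'etale map runs from $\Gamma'$ to $\Gamma$ precisely because $\PP$ and $\V$ are contravariant, even though the $\cV^\Euc$-arrow, and hence the $\opet{(\FinGraph/C)}$-arrow produced by $\pi$, runs from $\Gamma$ to $\Gamma'$. I would then apply Theorem~\ref{thm:sync} with its $\varphi$ taken to be $h$, its source graph our $\Gamma'$, its target graph our $\Gamma$, and its $w$ our $v\in\V(\Gamma)$; the conclusion is that
\[
\PP h\colon\bigl(\PP\Gamma_0,\,S_\Gamma(v)\bigr)\longrightarrow\bigl(\PP\Gamma'_0,\,S_{\Gamma'}(\V(h)v)\bigr)=\bigl(\PP\Gamma'_0,\,S_{\Gamma'}(v')\bigr)
\]
satisfies $D\PP h\circ S_\Gamma(v)=S_{\Gamma'}(v')\circ\PP h$, i.e.\ is a morphism of dynamical systems. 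Since $\PP h$ is in any case a smooth (indeed linear, by Remark~\ref{rmrk:Pf-is-linear}) map of Euclidean spaces, this shows $S(h,(v,\Gamma),(v',\Gamma')):=\PP h$ is a legitimate arrow of $\mathsf{Dynamical \,\, Systems}_\Euc$, and $U$ of it is $\PP h=\PP(\pi(h,(v,\Gamma),(v',\Gamma')))$.

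Functoriality is then formal and inherited from that of $\PP$. The identity of $(v,\Gamma)$ is $(id_\Gamma,(v,\Gamma),(v,\Gamma))$, since $\V(id_\Gamma)=id$, and $S$ carries it to $\PP(id_\Gamma)=id_{\PP\Gamma_0}$ by functoriality of $\PP$ (Remarks~\ref{rmrk:2.11} and~\ref{rmrk:PonFinGraph}). For composition, given $\Gamma\stackrel{h}{\leftarrow}\Gamma'\stackrel{g}{\leftarrow}\Gamma''$ with $\V(h)v=v'$ and $\V(g)v'=v''$, contravariant functoriality of $\V$ (Theorem~\ref{thmA}) gives $\V(hg)v=\V(g)\V(h)v=v''$, so $(hg,(v,\Gamma),(v'',\Gamma''))$ really is the composite in $\cV^\Euc$, and
\[
S\bigl((g,\ldots)\circ(h,\ldots)\bigr)=\PP(hg)=\PP g\circ\PP h=S(g,\ldots)\circ S(h,\ldots)
\]
by contravariant functoriality of $\PP$ (the ellipses denoting the relevant source/target pairs). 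Assembling the object- and arrow-level identities displayed above, $U\circ S=\PP\circ\pi$, so \eqref{eq:star-Euc} commutes.

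I do not expect a genuine obstacle: the geometric heart of the theorem is Theorem~\ref{thm:sync} (and, behind it, the careful construction of $S_\Gamma$ via limits), both already in hand. The single point that needs attention is keeping the two contravariances --- that of $\PP$ on finite graphs and that of $\V$ on \'etale maps --- consistent, so that the source $(\PP\Gamma_0,S_\Gamma(v))$ and target $(\PP\Gamma'_0,S_{\Gamma'}(v'))$ of $\PP h$ are matched with the correct virtual vector fields; once the indices are lined up, the verification is routine.
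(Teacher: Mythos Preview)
Your proposal is correct and follows essentially the same approach as the paper: define $S$ on objects and arrows by the displayed formulas, and invoke Theorem~\ref{thm:sync} to verify that $\PP h$ really is a morphism of dynamical systems. You are in fact more thorough than the paper, which leaves the functoriality checks (identities, composition) and the commutativity of \eqref{eq:star-Euc} implicit, whereas you spell them out.
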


\subsection{An example}\label{sec:extex}
\mbox{}\\[-8pt]

\noindent
Let us consider the two graphs $\Gamma,\Gamma'$ over the graph of
colors $C=\,\xymatrix{ *+[o][F]{} \ar@(dr,ur)}$\quad\quad depicted
in~\eqref{eq:gammapic} below, along with the maps $r\colon
\Gamma\to\Gamma', \imath \colon \Gamma'\to\Gamma$ given on nodes by
\begin{equation*}
  r(1,2,3,4,5,6) = (a,b,c,a,b,c),\quad \imath (a,b,c) = (1,2,3).
\end{equation*}
Clearly both $r$ and $\imath$ are \'etale, $r$ is surjective and $\imath$ is
injective, and $r\circ \imath=id_{\Gamma'}$.  
\begin{equation}\label{eq:gammapic}
\xy
(-10,12)*+{\xy
(-40,0)*+{\Gamma =}="n1";
(-20,0)*++[o][F]{1}="1"; 
(0,0)*++[o][F]{2}="2"; 
(20,0)*++[o][F]{3}="3"; 
(40,0)*++[o][F]{4}="4"; 
(60,0)*++[o][F]{5}="5"; 
(80,0)*++[o][F]{6}="6"; 
{\ar@{->}^{} "1";"2"};
{\ar@{->}_{} "2";"3"};{\ar@{->}_{} "3";"4"};
{\ar@{->}_{} "4";"5"};
{\ar@{->}_{} "5";"6"};
{\ar@/_1.5pc/ "3";"1"};
\endxy }="a";
(-10,-12)*+{ 
\xy
(-40,0)*+{\Gamma' =}="n2";
(-20,0)*++[o][F]{a}="1"; 
(0,0)*++[o][F]{b}="2"; 
(20,0)*++[o][F]{c}="3"; 
{\ar@{->}^{} "1";"2"};
{\ar@{->}_{} "2";"3"};
{\ar@/_1.5pc/ "3";"1"};
\endxy
}="b";
(-10,5)*+{}="c";
(-10,-5)*+{}="d";
(10,5)*+{}="e";
(10,-5)*+{}="f";
{\ar@{->>}^{r} "c";"d"};
{\ar@{^{(}->}^{\imath} "f";"e"};
\endxy
\end{equation}
Let Euclidean space $V$ be the value of the space function $\cP$ on the only node of the graph $C$ of colors: 
\[
V = \cP (\circ).
\]
Then the phase space associate to any node of $\Gamma$ and $\Gamma'$ is $V$.

We first consider the graph $\Gamma$.  Define the function
$\ell\colon[6]\to[6]$ by $\ell(1,2,3,4,5,6) = (3,1,2,3,4,5)$, i.e.
$\ell(i)$ is the number of the node which precedes the node $i$ in the
graph $\Gamma$.
We label the unique arrow of $\Gamma$ whose target is the node $i$ by
$\gamma_{\ell(i),i}$.  Consequently  each input tree of $\Gamma$ is of the form
\begin{equation*}
  I(i) = \quad\quad\quad  \xy
  (20,0)*+{\{\gamma_{\ell(i),i}\}}="n1";
  (0,0)*++[o][F]{i}="1";
  {\ar@{->}_{\gamma_{\ell(i),i}} "n1";"1"};
\endxy.
\end{equation*}
Thus $\Ctrl(I(i)) = C^\infty(V,V)$.  We next consider the groupoid
$G(\Gamma)$.  For any nodes $i,j$ of $\Gamma$, the map
$\sigma_{ij}\colon I(i)\to I(j)$, defined by $\sigma_{ij}\colon
(i\mapsto j, \gamma_{\ell(i),i}\mapsto \gamma_{\ell(j),j})$ is an
isomorphism of input trees.  Since $\sigma_{ij}|_{\lv I(i)}$ is the
unique map sending the one point set $\{\ell(i)\}$ to the one point
set $\{\ell(j)\}$, $\PP (\sigma_{ij}|_{\lv I(i)})\colon \PP \lv I(j)
\to \PP \lv I(i)$ is the identity map for all nodes $i, j$.
Consequently
\[
\Ctrl (\sigma_{ij})\colon\Ctrl(I(i))\to \Ctrl(I(j)),
\]
which is defined by 
\[
\Ctrl (\sigma_{ij}) (f) = id\circ f \circ \PP \sigma_{ij}
\]
for any $f\in \Ctrl(I(i))= C^\infty (\PP \lv I(i), \PP \{i\})=
C^\infty (V,V)$, is the identity map.  Informally speaking the $j$th
component of the groupoid invariant vector field $f$ on $\PP \Gamma_0$
depends on $\PP \{\ell (i)\} = \PP \lv I(i)$ in the same way that the
$i$th component of $f$ depends on $\PP \{ \ell (i)\} = \PP \lv I(i)$.  More formally by Theorem~\ref{thm:inv-of-groups}
\[
\V (\Gamma) \simeq C^\infty (V, V).
\]
Moreover it is not hard to see that $S_\Gamma\colon \V (\Gamma) \to \chi
(\PP \Gamma_0) = \chi (V^6)$ is given by
\[
S_\Gamma (f) (v_1, \ldots, v_6) = (f(v_3),f(v_1),f(v_2),f(v_3),f(v_4),f(v_5))
\]
for all $f\in C^\infty (V, V)$.
Similarly, $\PP (\Gamma'_0) = V^3$, 

\[
\V (\Gamma') \simeq C^\infty (V, V).
\]
and $S_{\Gamma'}\colon \V (\Gamma') \to \chi (\PP \Gamma'_0) = \chi (V^3)$ is
given by
\begin{equation}\label{eq:2.13.+}
  S_{\Gamma'}(g) (v_a, v_b, v_c) = (g(x_c),g(x_a),g(x_b))
\end{equation}
Using~\eqref{eq:PPf} we see that 
\[
\PP r\colon V^3=\PP\Gamma'_0 \to \PP \Gamma= \V^6
\]
is an embedding given by 
\[
\PP r(v_a,v_b,v_c)=(v_a,v_b,v_c,v_a,v_b,v_c),
\]
and $\PP \imath\colon V^6\to V^3$ is the projection given by
\[
\PP \imath(v_1,v_2,v_3,v_4,v_5,v_6) = (v_1,v_2,v_3)
\]
Note also that 
\[
\PP (\imath\circ r) = \PP r \circ \PP \imath\colon V^6\to V^6
\]
is the submersion onto the diagonal $\Delta_{V^3}\subset V^3\times
V^3\simeq V^6$ given by
\[
\PP (\imath\circ r)\, (v_1, v_2, v_3, v_4, v_5, v_6) = (v_1,v_2,v_3,
v_1,v_2,v_3).
\]

We now turn to the maps $\V(r)$ and $\V(\imath)$ between the spaces of
virtual groupoid invariant vector fields.  For any node $i$ of $\Gamma$ the map
\[
r_i\colon I(i)\to I(r(i))
\]
is an isomorphism inducing the identity map
\[
\Ctrl (r_i)\colon C^\infty(V,V) =\Ctrl (I(i))\to \Ctrl(I(r(i)) = C^\infty (V,V).
\]
It follows from the construction of the functor $\V$ on arrows (q.v.\
\eqref{eq:3.3}, \eqref{eq4:alpha}) that
\[
\V(r)\col \V(\Gamma') \to \V(\Gamma)
\]
is the identity map on $C^\infty(V,V)$.  Similarly $\V(\imath)$ is the identity map as well.

By Theorem~\ref{thm:sync} for any virtual groupoid invariant vector field $f\in \V(\Gamma))$ we have
\begin{equation}\label{eq:2.13.2}
  D \PP(\imath \circ r) \,S_\Gamma (f) = S_\Gamma (\V(\imath \circ r)f)\circ \PP (\imath \circ r)
\end{equation}
Recall that for any map $\varphi$ of graphs over $C$ the map of
Euclidean spaces $\PP (\varphi)$ is linear and consequently $D\PP
\varphi = \PP \varphi$ (q.v.\ \ref{rmrk:2.11.5}).  Therefore
\eqref{eq:2.13.2} amounts to
\[
\PP(\imath \circ r) \,S_\Gamma (f) = S_\Gamma (\V(\imath \circ r)f)\circ
\PP (\imath \circ r),
\]
c.f.\ \eqref{eq:1.36+}, or, in this case,
\[
\PP(\imath \circ r) \circ S_\Gamma (f) = S_\Gamma (f)\circ
\PP (\imath \circ r).
\]
  Therefore the diagonal $\Delta_{V^3} \subset
\PP \Gamma_0$ is an invariant submanifold for any groupoid invariant
vector field $F\in S_\Gamma (\V(\Gamma))$. Moreover, $\PP (\imath
\circ r)$ projects the dynamics of $F$ on $\PP\Gamma_0$ onto the
dynamics of $F|_{\Delta_{V^3}}$.  Thus for an equilibrium $x\in
\Delta_{V^3}$ of $F|_{\Delta_{V^3}}$, the set $(\PP(\imath \circ
r))\inv (x)$ is an invariant submanifold of $F$.  To see this, if we have $S_\Gamma(f) x = 0$ and $\PP(\imath\circ r) y = x$, then 
\begin{equation*}
  \PP(\imath\circ r) S_\Gamma(f) y = S_\Gamma(f)  \PP(\imath\circ r) y = S_\Gamma(f) x = 0,
  \end{equation*}
so that $S_\Gamma(f)$ must be tangent to the diagonal $\Delta_{V^3}$.
Similarly, for a periodic orbit $O$ of $F|_{\Delta_{V^3}}$, the
set $(\PP(\imath \circ r))\inv (O)$ is a ``relative periodic orbit''
of $F$, and so on.

By Theorem~\ref{thm:sync} again, a dynamical system of the form
$(\Delta_{V^3}, F|_{\Delta_{V^3}}$, $F\in S_\Gamma (\V(\Gamma))$ is an
isomorphic image (under $\PP (r)$) of a dynamical system of the form
$(\PP (\Gamma'_0), G)$, $G\in S_{\Gamma'}(\V(\Gamma'))$.  By
Theorem~\ref{thm:5.1.5} and Example~\ref{ex:5.1.6} any groupoid
invariant vector field $G\in S_{\Gamma'}(\V(\Gamma'))$ is also
invariant under the action of the cyclic group $\ZZ_3$.  This can also
be seen directly from \eqref{eq:2.13.+}.  Thus we can use the
machinery of equivariant dynamical systems to study the dynamics of
groupoid invariant vector fields on $\Delta_{\V^3}$ and thereby on
$\PP\Gamma_0$.

Finally note that the ``color'' maps $\mathbf{c}\col\Gamma \to C =
\,\xymatrix{ *+[o][F]{} \ar@(dr,ur)}$\quad\quad and
$\mathbf{c}'\col\Gamma'\to C$ are both \'etale.  Then by
Theorem~\ref{thm:sync} the maps $\PP \mathbf{c}\col V = \PP C_0 \to
\PP \Gamma_0$ and $\PP \mathbf{c}'\col V \to \PP \Gamma'_0$ define
invariant subsystems for any groupoid invariant vector fields $F\in
S_\Gamma (\V(\Gamma))$ and $G\in S_{\Gamma'} (\V(\Gamma'))$,
respectively.

\section{Linear groupoid invariant vector fields} \label{sec:FinVect}

\subsection{}
\mbox{}\\[-8pt]

\noindent
The functor $\V\colon \opet{(\FinGraph/C)} \to \Vect$ constructed in
the previous section associates to each finite colored graph $\Gamma\to C$ the
space of $C^\infty$ (infinitely differentiable) groupoid invariant
vector fields $\V(\Gamma)$ and subsequently $S_\Gamma (\V(\Gamma)) $
is a subspace of $C^\infty$ vector fields on the Euclidean phase space
$\PP\Gamma_0$.  It is not hard to modify our construction to associate
a different class of vector fields, such as linear or analytic.  All
one has to do is to replace the category $\Euc$ by an appropriate
subcategory.  Here is a description of what such a replacement
entails in the case of linear vector fields.

We replace the category $\Euc$ of Euclidean spaces and $C^\infty$ maps
with the category $\FinVect$ of finite dimensional vector spaces over
the reals and linear maps.  As before, we fix a graph $C=\{C_1\toto
C_0\}$ of colors and a phase space function $\cP\colon C_0\to
\FinVect$.  These choices give rise to a phase space functor
\begin{equation}
\PP\colon  \op{(\FinSet/C_0)}\to \FinVect.
\end{equation}
On objects the functor $\PP$ is defined as a categorical product (q.v.\
\ref{def:product}):
\[
\PP X \equiv \PP (X \stackrel{\alpha}{\to} C_0): = 
\prod_{x\in X } \cP(\alpha (x)).
\]
Note that while we use exactly the same notation as before, the product in
question is now a product in $\FinVect$ and not in $\Euc$.  Given a
morphism $ \xy (-6, 6)*+{Y} ="1"; (6, 6)*+{X} ="2"; (0,-4)*+{C_0
}="3"; {\ar@{->}^{ \alpha} "2";"3"}; {\ar@{->}^{f} "1";"2"};
{\ar@{->}_{\beta} "1";"3"}; \endxy $ in the slice category
$\FinSet/C_0$ we obtain
\[
\PP f\colon \PP Y\to \PP X 
\]
using the universal property of products (q.v.\ \eqref{eq:2.4.3}).  We
have already constructed the groupoid of finite colored trees
$\FinTree/C$, so we recycle the construction.  The definition of the
control functor $\Ctrl_{\FV}\colon \FinTree/C \to \Vect$ looks almost
the same as $\Ctrl\colon \FinTree/C \to \Euc$
(q.v.~\ref{sec:controlfunctor}): for a colored tree $(T\to C)\in
\FinTree/C$ we set
\[
\Ctrl_{\FV}(T):= \Hom_\FV (\PP \lv T, \PP \rt T).
\]
Note that now $\Hom _\FV (\PP \lv T, \PP \rt T)$ is the finite
dimensional vector space of linear maps from the vector space $\PP \lv
T$ to the vector space $\PP \rt T$ (in fact, $\dim\Ctrl_\FV(T) =
\dim(\PP \lv T)\dim(\PP\rt T)$).

Given an isomorphism $\xy (-6, 6)*+{T} ="1"; (6, 6)*+{T'} ="2";
(0,-4)*+{C }="3"; {\ar@{->}_{ \alpha} "1";"3"}; {\ar@{->}^{\sigma}
  "1";"2"}; {\ar@{->}^{\beta} "2";"3"}; \endxy $ of trees over $C$, we
define
 \[
\Ctrl_{\FV}(\sigma)\colon  \Ctrl_\FV(T) \to \Ctrl_\FV (T')
\]
by the same formula as in \eqref{eq:1.18}:
\[
\Ctrl_{\FV} (\sigma) X := X \circ \PP (\sigma|_{\lv T} )\colon \PP \lv
T' \to \PP \rt T = \PP \rt T'
\]
for any $(X\colon \PP \lv T \to \PP \rt T) \in \Ctrl(T) = \Hom_\FV
(\PP \lv T, \PP \rt T)$.  Note that the image of $\Ctrl_\FV$ lands in
the subcategory $\FinVect$ of $\Vect$.

We do not need to change the definitions of input trees
(Definition~\ref{def:input-tree}) or of the groupoids $G(\Gamma)$
associated to colored graphs $\Gamma\to C$
(Definition~\ref{def:groupoidGGamma}).  We thus define the
(finite-dimensional) vector space $\V_\FV(\Gamma)$ of virtual
groupoid-invariant {\em linear} vector fields by
\[
\V_\FV(\Gamma): = \lim (\Ctrl_\FV|_{G(\Gamma)}\colon  G(\Gamma)
\to \FinVect).
\]

\subsection{The relation of $\V_\FV(\Gamma)$ to vector fields on the phase
  space $\PP\Gamma_0$}\label{subsec:S-GammaFin}
\mbox{}\\[-8pt]

\noindent
Given a finite dimensional vector space $W$ the space of linear vector
fields on $W$ is (naturally isomorphic to) the space of linear maps
$\Hom_\FV(W,W)$.  Given a finite graph $\Gamma$ over a graph of colors
$C$ and the phase space function $\cP\colon C_0 \to \FinVect$, the functor $\PP$ assigns to the set $\Gamma_0$ of vertices of $\Gamma$ a
phase space $\PP \Gamma_0$, which is now a finite dimensional vector
space.  In complete analogy with Section~\ref{sec:Sgamma}, there exists
a canonical linear map $S_\Gamma$ from the space of virtual
groupoid-invariant vector fields $\V_\FV(\Gamma)$ to the vector
space $\Hom_\FV(\PP\Gamma_0,\PP\Gamma_0 )$ of { linear }vector fields
on $\PP \Gamma_0$.  It is defined as follows.

Since $\Hom_\FV (\PP\Gamma_0, \PP\Gamma_0) = \Hom_\FV (\PP\Gamma_0,
\prod_{a\in \Gamma_0} \PP \{a\})$
(q.v.\ Remark~\ref{rmrk:hom-into-product}), the space of linear vector
fields on $\PP \Gamma_0$ is canonically the product
\[
\Hom_\FV (\PP\Gamma_0, \PP\Gamma_0)= \prod_{a\in \Gamma_0} \Hom_\FV
(\PP\Gamma_0, \PP \{a\}).
\]
To define the map into a product, it is enough to define a map into
its factors.  Thus we need maps $\V_\FV(\Gamma) \to \Hom_\FV
(\PP\Gamma_0, \PP \{a\})$ for each vertex $a$ of the graph $\Gamma$.

For each input tree $I(a)$ we have a map of graphs $\xi\colon
I(a)\to\Gamma$ (q.v. Remark~\ref{remark:xi}).  Therefore we have a map
$\xi|_{\lv I(a)}\colon\lv I(a)\to \Gamma_0$ of finite sets over $C_0$.  Applying the
phase space functor $\PP$ we obtain the maps
\[
\PP(\xi|_{\lv I(a)})\colon  \PP\Gamma_0 \to \PP \lv I(a).
\]
The pullback by $\PP(\xi|_{\lv I(a)})$ gives
\[
(\PP(\xi|_{\lv I(a)}))^*\colon \Ctrl(I(a))\to \Hom_\FV (\PP\Gamma_0, \PP \{a\}).
\]
By the universal properties of the product $\chi (\PP\Gamma_0)$ there
a canonical unique map $S_\Gamma\colon \V(\Gamma) \to \Hom_\FinVect
(\PP\Gamma_0, \PP\Gamma_0)$ making the diagram
\begin{equation} \label{eq:3.0.2}
\xy
(-22, 9)*+{\V (\Gamma)}="1"; 
(24,9)*+{\Hom_\FV (\PP\Gamma_0, \PP\Gamma_0)}="2";
(-22, -10)*+{\Ctrl (I(a))}="3"; 
(24,-10)*+{\Hom_\FV(\PP\Gamma_0, \PP \{a\})}="4";
{\ar@{-->}^{\!\!\!\!\!\!\!\!\exists ! S_\Gamma} "1";"2"};
{\ar@{->}_{\varpi_a} "1";"3"};
{\ar@{->}^{p_a} "2";"4"};
{\ar@{->}_{(\PP(\xi|_{\lv I(a)}) )^*\quad} "3";"4"};
\endxy  
\end{equation} 
commute.  Here, as before, 
\[
\varpi_a\colon  \V _\FV\Gamma\to \Ctrl_\FV (\PP I(a))
\]
 and 
\[
p_a\colon \Hom_\FV(\PP\Gamma_0,\PP\Gamma_0) = \prod_{a'\in \Gamma_0}
\Hom_\FV (\PP\Gamma_0, \PP \{a'\}) \to \Hom_\FV (\PP\Gamma_0, \PP
\{a\})
\]
denote the canonical projections.

\subsection{The assignment $\Gamma \mapsto \V_\FV\Gamma$ extends to a functor}
\mbox{}\\[-8pt]

\noindent
Just as for $C^\infty $ vector fields on Euclidean spaces we have
\begin{theorem}\label{thmA'}
  The map $\V_\FV$ that assigns to each finite graph $\Gamma$ over $C$ the vector
  space of $G(\Gamma)$-invariant virtual linear vector fields on $\PP\Gamma_0$
  extends to a contravariant functor
\[
\V_\FinVect\colon  \opet{(\FinGraph/C)} \to \Vect.
\]
\end{theorem}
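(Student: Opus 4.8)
The plan is to reproduce the proof of Theorem~\ref{thmA} essentially verbatim, replacing the category $\Euc$ by $\FinVect$ and the control functor $\Ctrl$ by $\Ctrl_\FV$ throughout. The point to check is that nothing in that argument used the smooth structure in an essential way: it relied only on two formal facts, namely that the control functor sends isomorphisms of colored trees to isomorphisms of vector spaces, and that the target category $\Vect$ admits limits of the diagrams indexed by the (finite-object) groupoids $G(\Gamma)$ — and both facts persist in the linear setting.

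First I would observe that Lemma~\ref{lemma:ff} is a statement purely about graphs and their symmetry groupoids: from an \'etale map $\varphi\colon\Gamma\to\Gamma'$ over $C$ it produces a fully faithful functor $G(\varphi)\colon G(\Gamma)\to G(\Gamma')$, given on objects by $I(a)\mapsto I(\varphi(a))$ and on arrows by conjugation (formula \eqref{eq:G(varphi)sigma}); this is independent of the target of the control functor, so it applies here unchanged. In particular, for every arrow $I(a)\xrightarrow{\sigma}I(b)$ of $G(\Gamma)$ we retain the commuting square \eqref{eq:3.1} of finite colored trees over $C$.

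Next I would apply $\Ctrl_\FV$ to \eqref{eq:3.1}. Since $\Ctrl_\FV$ is defined by the same formula \eqref{eq:1.18} as $\Ctrl$, namely precomposition with $\PP(\sigma|_{\lv T})$ (now a linear isomorphism), the resulting square is the $\FinVect$-analogue of \eqref{eq:3.2}: it commutes, and its vertical maps $\Ctrl_\FV(\varphi_a)$, $\Ctrl_\FV(\varphi_b)$ are isomorphisms because $\varphi_a$, $\varphi_b$ are isomorphisms of trees over $C$. Assembling these squares over all arrows of $G(\Gamma)$ exactly as in the proof of Theorem~\ref{thmA}, one obtains, for each arrow $I(a)\xrightarrow{\sigma}I(b)$, a commuting triangle with apex $\V_\FV(\Gamma')$ and legs $\varpi_{\varphi(a)}\circ\Ctrl_\FV(\varphi_a)\inv$, $\varpi_{\varphi(b)}\circ\Ctrl_\FV(\varphi_b)\inv$ into the base $\Ctrl_\FV(I(a))\xrightarrow{\Ctrl_\FV(\sigma)}\Ctrl_\FV(I(b))$. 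By the universal property of the limit $\V_\FV(\Gamma)=\lim(\Ctrl_\FV|_{G(\Gamma)})$ there is then a unique linear map $\V_\FV(\varphi)\colon\V_\FV(\Gamma')\to\V_\FV(\Gamma)$ through which all these legs factor — this is the $\FinVect$-version of \eqref{eq:3.3}. Functoriality, $\V_\FV(\psi\circ\varphi)=\V_\FV(\varphi)\circ\V_\FV(\psi)$ and $\V_\FV(\mathrm{id})=\mathrm{id}$, then follows from the usual uniqueness clause in the universal property, just as at the end of the proof of Theorem~\ref{thmA} (cf.\ Proposition~\ref{prop:5.1.11}). One should also remark that $\V_\FV(\Gamma)$ is genuinely an object of $\FinVect$, hence of $\Vect$: it is a limit of a diagram of finite-dimensional vector spaces whose indexing groupoid $G(\Gamma)$ has only finitely many objects (since $\Gamma$ is finite), so it is a finite-dimensional subspace of a finite product of finite-dimensional spaces.

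I do not expect a genuine obstacle here; the only thing really worth double-checking is the claim that the passage from $\Euc$ to $\FinVect$ is harmless, i.e.\ that (i) $\Ctrl_\FV$ is a well-defined functor carrying isomorphisms to isomorphisms, and (ii) $\FinVect$ (and a fortiori $\Vect$) admits the relevant limits. Both are immediate: (i) because $\PP$ takes values in $\FinVect$ and precomposition with a linear isomorphism is a linear isomorphism, and (ii) because $\FinVect$ is complete for limits of diagrams whose indexing category has finitely many objects. Alternatively — and this is the more conceptual route — one can simply invoke the abstract ``$2$-slice'' machinery of Section~\ref{sec:2slice} together with Remark~\ref{rmrk:2.10.11}, which was designed precisely to treat $\PP$, $\V$ and $\V_\FV$ uniformly; the present theorem is then the instance of that machinery obtained by taking $\D=\FinVect$ and the family of groupoids $\{G(\Gamma)\}_{\Gamma\in(\FinGraph/C)_0}$.
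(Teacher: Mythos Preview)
Your proposal is correct. Your primary route---mimicking the proof of Theorem~\ref{thmA} directly via the universal property of limits---is exactly what the paper acknowledges one ``could'' do, while the $2$-slice alternative you mention at the end is the route the paper actually takes. The paper inverts your emphasis: after obtaining the commuting square \eqref{eq:3.2'} just as you do, it explicitly writes ``We now could mimic the rest of the proof of \ref{thmA}. However, we choose to proceed in a more functorial fashion,'' then packages the resulting $2$-commuting triangle as a morphism in $\Cat/\Vect$, checks that $\Gamma\mapsto\bigl(G(\Gamma)\xrightarrow{\Ctrl_\FV|_{G(\Gamma)}}\Vect\bigr)$ defines a functor $\mathcal G\colon\et{(\FinGraph/C)}\to\Cat/\Vect$, and sets $\V_\FV=L\circ\mathcal G$ with $L$ the limit functor of Proposition~\ref{prop:5.1.11}. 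The paper's choice buys a reusable template (invoked verbatim again for $\V_\Man$ in Theorem~\ref{thmA''}); your direct approach is more self-contained and avoids the overhead of Section~\ref{sec:2slice}. One small correction to your final sentence: take $\D=\Vect$, not $\D=\FinVect$---the functor $L$ of Proposition~\ref{prop:5.1.11} is set up with target $\Vect$, and the theorem only asserts that $\V_\FV$ lands in $\Vect$ (your observation that the values are in fact finite-dimensional is true but not needed here).
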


\begin{proof}
Consider an \'etale map of graphs over $C$: 
\[
\xy
(-6, 6)*+{\Gamma} ="1"; 
(6, 6)*+{\Gamma'} ="2";
(0,-4)*+{C }="3";
{\ar@{->}_{ \alpha} "1";"3"};
{\ar@{->}^{\varphi} "1";"2"};
{\ar@{->}^{\beta} "2";"3"};
\endxy ,
\] 
that is, a morphism in the category $\et{(\FinGraph/C)}$.  Given an
arrow $I(a)\stackrel{\sigma}{\longrightarrow}I(b)$ in the groupoid
$G(\Gamma)$ we have a commuting diagram \eqref{eq:3.1}
\[
\xymatrix{ 
I(a) \ar[r]^{\sigma} \ar[d]^{\varphi_a} & I(b)\ar[d]^{\varphi_b}\\
I(\varphi(a))\ar[r]_{G(\varphi)\sigma} & I(\varphi(b))
}
\] 
in the category of finite trees over $C$.  Applying the control
functor $\Ctrl_\FV$ gives us a commuting diagram in the category
of vector spaces (q.v.\ \eqref{eq:3.2}):
\begin{equation}\label{eq:3.2'}
\xy
(-25, 9)*+{\Ctrl_{\FV}(I(a))}="1"; 
(25, 9)*+{\Ctrl_{\FV}(I(b))}="2"; 
(-25, -9)*+{\Ctrl_{\FV}(I(\phi(a)))}="3"; 
(25, -9)*+{\Ctrl_{\FV}(I(\phi(b)))}="4"; 
{\ar@{->}^{\Ctrl_{\FV}(\sigma)} "1";"2"};
{\ar@{->}_{\Ctrl_{\FV}(\varphi_a)} "1";"3"};
{\ar@{->}^{\Ctrl_{\FV}(\varphi_b)} "2";"4"};
{\ar@{->}_{\Ctrl_{\FV}(G(\varphi)\sigma)} "3";"4"};
\endxy  
\end{equation}
We now could mimic the rest of the proof of \ref{thmA}.  However, we
choose to proceed in a more functorial fashion.  Diagram
\eqref{eq:3.2'} gives us the 2-commutative diagram
\[
\xy
(-10, 10)*+{G(\Gamma)} ="1"; 
(10, 10)*+{G(\Gamma')} ="2";
(0,-6)*+{\Vect }="3";
{\ar@{->}_{ \Ctrl_\FV |_{G(\Gamma)}} "1";"3"};
{\ar@{->}^{G(\varphi)} "1";"2"};
{\ar@{->}^{\Ctrl_\FV|_{G(\Gamma')}} "2";"3"};
{\ar@{=>}_<<<{\scriptstyle \phi} (4,6)*{};(-2,2)*{}} ; 
\endxy ,   
\]
q.v.\ Remark~\ref{rmrk:2.10.11}.  This 2-commuting triangle is a
morphism in $\Cat/\Vect$ (see \ref{empt:5.1.1}).  It is not hard to
check that the assignment
\[
\mathcal{G}\colon  \et{(\FinGraph/C)} \to \Cat/\Vect, \quad
\xy
(-6, 6)*+{\Gamma} ="1"; 
(6, 6)*+{\Gamma'} ="2";
(0,-4)*+{C }="3"; 
{\ar@{->}_{ \alpha} "1";"3"};
{\ar@{->}^{\varphi} "1";"2"};
{\ar@{->}^{\beta} "2";"3"};
\endxy  
\mapsto
\xy
(-10, 10)*+{G(\Gamma)} ="1"; 
(10, 10)*+{G(\Gamma')} ="2";
(0,-6)*+{\Vect }="3";
{\ar@{->}_{ \Ctrl_\FV |_{G(\Gamma)}} "1";"3"};
{\ar@{->}^{G(\varphi)} "1";"2"};
{\ar@{->}^{\Ctrl_\FV|_{G(\Gamma')}} "2";"3"};
{\ar@{=>}_<<<{\scriptstyle \phi} (4,6)*{};(-2,2)*{}} ;  
\endxy         
\]  
preserves the composition of morphisms and thus is a functor.
Composing the functor $\mathcal{G}$ with the contravariant functor
$L\colon \op{\Cat/\Vect} \to \Vect$ (see \ref{prop:5.1.11}) gives us the desired
contravariant functor
\[
\V_\FV = \mathcal{G}\circ L\colon  \opet{(\FinGraph/C)} \to \Vect.
\]
\end{proof}

\subsection{The combinatorial category $\cV^\FV$ of linear dynamical
  systems}\label{subsec:cVFinVect}
\mbox{}\\[-8pt]

\noindent
Associated to the functor $\V_\FV$ we have the category of elements
$\cV^\FV$ and a functor $\cV^\FV \to \opet{(\FinGraph/C)}$.  As in the
case of $\cV^\Euc$ the objects of $\cV^\FV$ are pairs of the form
$(\Gamma, v)$ with $\Gamma$ a finite graph over $C$ and $v\in
\V_\FV(\Gamma)$.  The morphisms in $\cV^\FV$ are the tuples of the
form $(h,(v,\Gamma), (v', \Gamma') )$ with $h\colon \Gamma'\to \Gamma$
a morphism in $\FinGraph/C$ and $\V_\FinGraph (h)v = v'$.

As in the case of $C^\infty$ vector fields on Euclidean spaces the
maps 
\[
\{S_\Gamma\colon \V_\FV (\Gamma) \to \Hom_\FV (\PP \Gamma_0, \PP
\Gamma_0)\}_{\Gamma\in (\FinGraph/C)_0}
\]
 assemble into a functor $S$ from the category of elements $\cV^\FV$
 to the category $\mathsf{LinDyn}$ of {\em linear} dynamical
 systems.  The key technical result in proving that $S$ is a functor
 is the analogue of Theorem~\ref{thm:sync}:
\begin{theorem}\label{thm:sync-lin}
  Let $w\in \V_\FV(\Gamma')$ be a virtual linear dynamical system on
  $\PP(\Gamma)$.  For each \'etale map of graphs
\[
\varphi\colon  \Gamma \to \Gamma'
\]
 the diagram
\begin{equation}\label{eq:4.1-lin}
\xymatrix{
 \PP(\Gamma'_0) \ar[r]^{\PP\varphi } 
&   \PP(\Gamma_0)   \\
\PP(\Gamma'_0) \ar[r]^{\PP \varphi}  \ar[u]^{S_{\Gamma'}(w)}
&  \PP(\Gamma_0)\ar[u]_{S_\Gamma(\V (\varphi)w)} }
\end{equation}
commutes.
In other words, 
\[
\PP (\varphi)\colon  (\PP(\Gamma'), S_{\Gamma'}(w))\to  
(\PP(\Gamma), S_\Gamma(\V (\varphi)w))
\]
is a map of linear dynamical systems.
\end{theorem}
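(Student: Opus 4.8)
The plan is to deduce Theorem~\ref{thm:sync-lin} from Theorem~\ref{thm:sync} after setting up a dictionary between the linear objects of this section and their Euclidean counterparts. The forgetful functor $\FinVect\to\Euc$ (every finite-dimensional real vector space is a Euclidean space) preserves finite products, so --- for the same phase space function $\cP$, now viewed as taking values in $\FinVect$ --- the linear phase space functor $\PP\colon\op{(\FinSet/C_0)}\to\FinVect$ becomes, after forgetting linearity, the Euclidean phase space functor of Subsection~\ref{sec:PP} (this is the content of Remark~\ref{rmrk:constr-of-P}). Consequently, for a colored tree $T\to C$ the space $\Ctrl_\FV(T)=\Hom_\FV(\PP\lv T,\PP\rt T)$ is the subspace of linear maps inside $\Ctrl(T)=C^\infty(\PP\lv T,\PP\rt T)$, and $\Ctrl_\FV(\sigma)$ is the restriction of $\Ctrl(\sigma)$, since both are precomposition with the linear map $\PP(\sigma|_{\lv T})$. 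From the concrete description of the limit in Remark~\ref{rmrk:2.8.3} it then follows that $\V_\FV(\Gamma)$ is exactly the subspace of $\V(\Gamma)$ consisting of the virtual groupoid-invariant vector fields all of whose components $\varpi_a$ are linear; that the map $S_\Gamma$ of \eqref{eq:3.0.2} is the restriction of the map $S_\Gamma$ of \eqref{eq:2.2}, and that it lands in linear vector fields because each pullback $(\PP(\xi|_{\lv I(a)}))^{*}$ is composition with the linear map $\PP(\xi|_{\lv I(a)})$; and that $\V_\FV(\varphi)$ is the restriction of $\V(\varphi)$, since $\Ctrl(\varphi_a)\inv$ preserves linearity, so the uniqueness in \eqref{eq:3.3} forces the two maps to agree on linear virtual vector fields.

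Granting this dictionary, the theorem follows by specialization. Let $w\in\V_\FV(\Gamma')\subseteq\V(\Gamma')$ and let $\varphi\colon\Gamma\to\Gamma'$ be \'etale. Theorem~\ref{thm:sync} gives $D\PP\varphi\circ S_{\Gamma'}(w)=S_\Gamma(\V(\varphi)w)\circ\PP\varphi$. By Remark~\ref{rmrk:Pf-is-linear} the map $\PP\varphi$ is linear, so (cf.\ Remark~\ref{rmrk:2.11.5}) $D\PP\varphi=\PP\varphi$, and under the identification of a linear vector field on a finite-dimensional space $W$ with an element of $\Hom_\FV(W,W)$ the identity becomes $\PP\varphi\circ S_{\Gamma'}(w)=S_\Gamma(\V_\FV(\varphi)w)\circ\PP\varphi$ as linear maps $\PP\Gamma'_0\to\PP\Gamma_0$. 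Two linear vector fields $A_1$ on $W$ and $A_2$ on $W'$ are $\psi$-related along a linear map $\psi\colon W\to W'$ precisely when $\psi\circ A_1=A_2\circ\psi$, so this says exactly that \eqref{eq:4.1-lin} commutes, i.e.\ that $\PP\varphi$ is a morphism of linear dynamical systems.

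Alternatively one may bypass the smooth theory and repeat the diagram chase from the proof of Theorem~\ref{thm:sync} verbatim: decompose $\Hom_\FV(\PP\Gamma_0,\PP\Gamma_0)=\prod_{a\in\Gamma_0}\Hom_\FV(\PP\Gamma_0,\PP\{a\})$, use the projections $p_a(X)=\PP\kappa_a\circ X$ (the linear analogue of \eqref{eq:pakappaa}, with $\kappa_a$ as in \eqref{eq:kappa}), the commuting squares \eqref{eq:(1'')} and \eqref{eq:(3)} furnished by functoriality of $\PP$ on the \'etale data, and the linear analogues of \eqref{eq:4*}, \eqref{eq:4*'} and \eqref{eq4:2alpha} read off from \eqref{eq:3.0.2}, \eqref{eq:3.3} and the definition of $\Ctrl_\FV(\varphi_a)$; in this setting every occurrence of a differential in that computation simply drops out. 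I expect no real obstacle in either route; the only step that genuinely requires care is the first paragraph --- verifying that the limit $\V_\FV(\Gamma)$ computed in $\FinVect$ is literally the ``linear part'' of $\V(\Gamma)$ and that $S_\Gamma$ and $\V(\varphi)$ restrict as claimed --- after which the argument is entirely mechanical.
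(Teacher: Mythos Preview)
Your proposal is correct. The paper itself omits the proof entirely, stating only that it is ``completely analogous to the proof of~\ref{thm:sync}''; your second alternative is precisely that route.

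Your primary route is a genuine (if mild) variation: rather than rerunning the diagram chase in $\FinVect$, you set up the dictionary once---$\V_\FV(\Gamma)\subset\V(\Gamma)$ as the linear-component tuples, with $S_\Gamma$ and $\V(\varphi)$ restricting compatibly---and then invoke Theorem~\ref{thm:sync} as a black box, using Remark~\ref{rmrk:2.11.5} to strip the differential. This buys you the linear statement essentially for free once the compatibility checks are done, and those checks are straightforward because every structure map involved ($\PP(\sigma|_{\lv T})$, $\PP(\xi|_{\lv I(a)})$, $\PP\varphi_a$) is linear, so precomposition preserves $\Hom_\FV\subset C^\infty$. The paper's implicit approach, by contrast, treats the linear theory as a parallel development rather than a specialization, which is slightly more self-contained but redundant. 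Either way the content is the same; your flagged ``only step that genuinely requires care'' is indeed the crux, and your justification for it is sound.
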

\noindent
Since the proof of \ref{thm:sync-lin} is completely analogous to the
proof of \ref{thm:sync} we omit it.  We conclude the section by
stating the analogue of Theorem~\ref{thm:2.12.3}:

\begin{theorem}\label{thm:2.12.3-lin}
  Let $\cV^\FV$ denote the category of virtual groupoid-invariant
  dynamical systems constructed above, $\pi:\cV^\FV \to
  \opet{(\FinGraph/C)}$ denote the canonical projection, $U\colon
  \mathsf{LinDyn} \to \FinVect$ the forgetful functor, and $\PP$
  the phase space functor extended to finite graphs
  (q.v.\ \ref{rmrk:PonFinGraph}). There exists a functor $S\colon
  \cV^\FV \to \mathsf{LinDyn}$ making the diagram
\begin{equation}\label{eq:star-lin}
\xymatrix{
\cV^\FV  \ar[r]^{S \quad\quad } \ar[d]_{\pi}
&  \mathsf{LinDyn} \ar[d]^{U}  \\
 \opet{(\FinGraph/C)}  \ar[r]^{\quad\PP }
&  \FinVect }
\end{equation}
commute.  On objects
\[
S(v,\Gamma) = (\PP \Gamma_0, S_\Gamma (v));
\]
on arrows
\[
S(h,(v,\Gamma), (v', \Gamma')) = (\PP \Gamma_0, S_\Gamma(v))
\stackrel{\PP h}{\to} (\PP\Gamma', S_{\Gamma'} (v)).
\]
\end{theorem}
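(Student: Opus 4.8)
The plan is to repeat the proof of Theorem~\ref{thm:2.12.3} essentially verbatim, with $\Euc$ replaced by $\FinVect$, the space of vector fields $\chi(\PP\Gamma_0)$ replaced by the space $\Hom_\FV(\PP\Gamma_0,\PP\Gamma_0)$ of linear vector fields, the functor $\V$ replaced by $\V_\FV$ (Theorem~\ref{thmA'}), and $\mathsf{Dynamical \,\,Systems}_\Euc$ replaced by $\mathsf{LinDyn}$. The only genuinely new input is the linear synchronization statement Theorem~\ref{thm:sync-lin}, which carries all the content.

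First I would define the candidate functor $S\colon\cV^\FV\to\mathsf{LinDyn}$. On an object $(v,\Gamma)$ I set $S(v,\Gamma):=(\PP\Gamma_0,S_\Gamma(v))$, where $S_\Gamma\colon\V_\FV(\Gamma)\to\Hom_\FV(\PP\Gamma_0,\PP\Gamma_0)$ is the canonical linear map built in Subsection~\ref{subsec:S-GammaFin}; since $\PP\Gamma_0$ is a finite-dimensional vector space and $S_\Gamma(v)$ is a linear endomorphism of it, $(\PP\Gamma_0,S_\Gamma(v))$ is an object of $\mathsf{LinDyn}$. On a morphism $(h,(v,\Gamma),(v',\Gamma'))$ of $\cV^\FV$ --- so $h\colon\Gamma'\to\Gamma$ is an \'etale map of graphs over $C$ and $v'=\V_\FV(h)v$ --- I set $S(h,(v,\Gamma),(v',\Gamma')):=\PP h$, viewed as a linear map of the underlying phase spaces $\PP\Gamma_0\to\PP\Gamma'_0$.

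Then I would check the three points that make this well defined and functorial. (i) That $\PP h$ is actually a morphism in $\mathsf{LinDyn}$ from $(\PP\Gamma_0,S_\Gamma(v))$ to $(\PP\Gamma'_0,S_{\Gamma'}(v'))$ is exactly Theorem~\ref{thm:sync-lin}, applied to the \'etale map $h\colon\Gamma'\to\Gamma$ and the virtual linear vector field $v\in\V_\FV(\Gamma)$ (which play the roles of $\varphi\colon\Gamma\to\Gamma'$ and $w$ in that statement), bearing in mind that $v'=\V_\FV(h)v$. (ii) That $S$ preserves identities is immediate from $\PP(\mathrm{id}_\Gamma)=\mathrm{id}_{\PP\Gamma_0}$; for composition, given composable morphisms with underlying graph maps $h\colon\Gamma'\to\Gamma$ and $g\colon\Gamma''\to\Gamma'$ in $\FinGraph/C$, contravariance of $\V_\FV$ gives $\V_\FV(h\circ g)v=\V_\FV(g)\V_\FV(h)v=v''$, so the composite in $\cV^\FV$ is $(h\circ g,(v,\Gamma),(v'',\Gamma''))$, and contravariance of $\PP$ gives $S(h\circ g,\ldots)=\PP(h\circ g)=\PP g\circ\PP h$, which is the composite $S(g,\ldots)\circ S(h,\ldots)$ in $\mathsf{LinDyn}$. (iii) Commutativity of \eqref{eq:star-lin} is then immediate: $U\circ S$ sends $(v,\Gamma)\mapsto\PP\Gamma_0$ and $(h,(v,\Gamma),(v',\Gamma'))\mapsto\PP h$, which is exactly $\PP\circ\pi$ by the definitions of $\pi$, of the forgetful functor $U$, and of $\PP$ on finite graphs (Remark~\ref{rmrk:PonFinGraph}).

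I do not expect any real obstacle beyond Theorem~\ref{thm:sync-lin} itself, which is the linear analogue of Theorem~\ref{thm:sync} and is obtained by the same diagram chase, with the simplification that in the linear setting the maps $\PP\varphi$ are already linear, so the passage through tangent bundles in \eqref{eq:(3)} and \eqref{eq:(4)} becomes trivial; this is the step I would flag as doing all the work, and it has been dispatched above. The residual work for Theorem~\ref{thm:2.12.3-lin} is purely formal bookkeeping, and the only thing requiring attention is keeping the variances straight --- \'etale maps, the contravariant functors $\PP$ and $\V_\FV$, and the category-of-elements conventions defining $\cV^\FV$ --- exactly as in the proof of Theorem~\ref{thm:2.12.3}.
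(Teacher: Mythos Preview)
Your proposal is correct and matches the paper's approach exactly: the paper does not write out a separate proof of Theorem~\ref{thm:2.12.3-lin} at all, merely presenting it as the linear analogue of Theorem~\ref{thm:2.12.3} after noting that Theorem~\ref{thm:sync-lin} is proved just like Theorem~\ref{thm:sync}. Your write-up is in fact more careful than the paper's treatment of the Euclidean case, explicitly verifying functoriality and commutativity of the square rather than leaving them implicit.
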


\begin{remark}
  If $\varphi\colon\Gamma\to\Gamma'$ is an isomorphism of graphs, then
  $\PP\varphi$ and $\V\varphi$ are isomorphisms of vector spaces,
  since $\PP$ and $\V$ are functors.  Therefore~\eqref{eq:4.1-lin} is
  a similarity of linear transformations.  It is not hard to see,
  however, that the only similarities we can obtain in this matter are
  permutations of blocks of identities (see~\eqref{eq:PPf}).  In any
  case, $S_\Gamma(w)$ and $S_{\Gamma'}(\V(\varphi)w)$ have precisely
  the same spectrum when thought of as linear transformations.  If
  $\varphi$ is not an isomorphism, then even though~\eqref{eq:4.1-lin}
  is in the form of a similarity transformation, the relationship
  between the spectra of $S_\Gamma(w)$ and $S_{\Gamma'}(\V(\varphi)w)$
  is a complicated question,~q.v.~\cite{Knutson.Tao.01, Li.Poon.03}.  A partial resolution of this question
  will be useful for an application which we will present
  in~\cite{DeVille.Lerman.10.3}.
\end{remark}

\section{Groupoid invariant vector fields on manifolds}\label{sec:Man}

\noindent
We now extend the results of Section~\ref{sec:Euc} from the category
$\Euc$ of Euclidean spaces to the category  $\Man$ of (Hausdorff paracompact
finite dimensional $C^\infty$) manifolds.  Since the
constructions in Section~\ref{sec:Euc} are functorial and, in
particular, do not use coordinates, such an extension is not difficult.
However, we  do need to modify our definition of the control functor $\Ctrl$.

\subsection{Control systems functor }\label{subsec:csfMan}
\mbox{}\\[-8pt]

\noindent
Recall the construction
\[\Ctrl\colon  \FinTree/C
\to \Vect
\] 
in the case of Euclidean spaces. First, a choice of a phase space
function $\cP\colon C_0 \to \Euc$ gives rise to a phase space functor
\[
\PP\colon \op{(\Finset/C_0)} \to \Euc.
\]
Then given a finite tree $T$ over a graph $C$ of colors we defined
the space of control systems $\Ctrl(T)$ by
\[
\Ctrl(T) := \Hom_\Euc (\PP \lv T, \PP \rt T),
\]
which is canonically a subspace of the space of control systems
$\CT(\PP \lv T \times \PP \rt T \to \PP \rt T)$
(q.v.\ \ref{empt:2.1.3} and \ref{rmrk:ctrl-euc}).  

Suppose we mimic this definition in the case of manifolds. As remarked
in Section~\ref{sec:PP}, just as in the case of Euclidean spaces, a
choice of a phase space function $\cP\colon C_0 \to \Man$ gives rise
to a phase space functor
\[
\PP\colon \op{(\Finset/C_0)} \to \Man.
\]
On objects
\[
\PP X \equiv \PP (X \stackrel{\alpha}{\to} C_0):= 
\prod_{x\in X } \cP(\alpha (x)),
\]
where now the product is in the category $\Man$.  Since $\Man$ has
finite products, $\PP$ is well-defined on objects.  Implicit in this
definition is a {\em choice} of a manifold $\prod_{x\in X } \cP(\alpha
(x))$ and a family of submersions $\{p_x\colon \prod_{x'\in X }
\cP(\alpha (x'))\to \cP(\alpha (x))\}_{x\in X}$.  By the universal
property of products for any morphism $ \xy (-6, 6)*+{Y} ="1"; (6,
6)*+{X} ="2"; (0,-4)*+{C_0 }="3"; {\ar@{->}^{ \alpha} "2";"3"};
{\ar@{->}^{f} "1";"2"}; {\ar@{->}_{\beta} "1";"3"}; \endxy $ in the
slice category $\FinSet/C_0$, we get a map of manifolds $\PP f\colon
\PP X \to \PP Y$ satisfying
\[
\xy
(-18, 10)*+{ \PP X} ="1"; 
(18, 10)*+{ \PP Y} ="4"; 
(-18,-6)*+{\cP (\alpha (f(y)))}="2";
(18,-6)*+{\cP (\beta (y))} ="3";
{\ar@{->}_{ p_{f(y)}} "1";"2"};
{\ar@{->}^{q_{y}} "4";"3"};
{\ar@{-->}^{\PP f} "1";"4"};
{\ar@{=} "2";"3"};
\endxy ,
\]
q.v.\ \ref{rmrk:constr-of-P}.
However, the embedding \eqref{eq:2.1.4}
\[
\Hom_\Euc(V, W) \hookrightarrow \CT(V\times W\to W), \quad f\mapsto (pr_2, F)
\]
has no analogue in the category of manifolds.
Indeed what makes the embedding work is that the tangent space $TW$ of
a Euclidean space $W$ is canonically  isomorphic to $W\times W$:
$TW\stackrel{\simeq}{\to} W\times W$. 
Thus defining the control functor $\Ctrl_\Man$ on trees by 
\[
\Ctrl_\Man (T) = 
\Hom_\Man (\PP \lv T, \PP \rt T) = C^\infty (\PP \lv T, \PP \rt T)
\]
does not make sense for a general manifold.\footnote{One can try and
  salvage this approach by restricting the definition of the $\Ctrl$
  functor to a subcategory of canonically parallelizable manifolds
  (e.g., the category of  abelian Lie groups and smooth maps) and setting
\[
\Ctrl_\Man (T) := C^\infty (\PP \lv T, T_x (\PP \rt T))
\]
for some arbitrary point $x\in \PP \rt T$.  Any
trivialization of the fiber bundle  $T(\PP \rt T) \simeq \PP \rt T \times T_x
(\PP \rt T) $  would then give an embedding $C^\infty (V, W)
\hookrightarrow \CT(V\times W\to W)$.  We do not pursue this idea
further here, but see Golubitsky, Josi{\'c}, and
Shea-Brown~\cite{Golubitsky.Josic.Brown.06} for a beautiful
application.}  However, given a finite tree $T$ over $C$ and a phase
space functor $\PP\colon \op{(\FinSet/C)}\to \Man $ we do have a
canonical projection
\[
pr_2\colon  \PP \lv T \times \PP \rt T \to \PP \rt T,
\]
which is a surjective submersion.  Consequently we can talk about the
space $\CT(pr_2\colon \PP \lv T \times \PP \rt T \to \PP \rt T)$ of
all control systems supported on $pr_2$, see \ref{empt:CT}.
Therefore,

\begin{definition}[The space of control systems associated to a tree]
  \label{def:ctrl-Man} 
  Given a finite tree $T$ over a graph $C$ of colors and a phase space
  functor $\PP\colon \op{(\FinSet/C)} \to \Man$, we define the infinite
  dimensional vector space of {\em control systems associated to the
    tree $T \to C$} by
\begin{eqnarray*}
\lefteqn{\Ctrl_\Man (T) :=}\\
&&\{F\colon \PP \lv T \times \PP \rt T \to T \PP \rt T\mid
F(u,m) \in T_m (\PP \rt T)\textrm{ for all }(u,m)\in \PP \lv T \times
\PP \rt T \}\\
&&\equiv \CT(pr_2\colon  \PP \lv T \times \PP \rt T \to \PP \rt T).
\end{eqnarray*}
\end{definition}
\begin{remark}
  If the space of leaves $\lv T$ is empty (so that $T$ consists of a
  single vertex) then $\PP \lv T $ is a point and $\Ctrl_\Man (T)$ is
  simply the space of all vector fields on the phase space $\PP \rt T$
  (q.v. Remark~\ref{remark:vec-ctrl}).
\end{remark}

The assignment $T\mapsto \Ctrl_\Man (T)$ extends to a functor $\Ctrl_\Man\colon 
\FinTree/C \to \Vect$, from the category of finite trees over $C$ to the
category of infinite dimensional vector spaces as follows.
Let 
\[
\xy
(-6, 6)*+{T} ="1"; 
(6, 6)*+{T'} ="2";
(0,-4)*+{C }="3";
{\ar@{->}_{ \alpha} "1";"3"};
{\ar@{->}^{\sigma} "1";"2"};
{\ar@{->}^{\beta} "2";"3"};
\endxy 
\]
 be an isomorphism of trees.  Then we have two isomorphisms of
finite sets over $C_0$:
\[
\sigma|_{\lv T}\colon  \lv T \to \lv T' \quad \textrm{and} \quad
\sigma|_{\rt T}\colon \rt T \to \rt T'.
\]
Applying the phase space functor $\PP$ we get two diffeomorphisms of manifolds
\begin{equation*}
  \PP(\sigma|_{\lv T})\colon \PP \lv T'\to \PP \lv T,\quad
  \PP(\sigma|_{\rt T})\colon \PP \rt T'\to \PP \rt T.
\end{equation*}
Note that the second diffeomorphism is the identity map, see \ref{rmrk:2.11}.
In other words the square
\[\xymatrix@C+2cm{  \PP \lv T'\times  \PP \rt T'
\ar[r]^{\PP (\sigma|_{\lv T})\times \PP (\sigma|_{\rt T}) } \ar[d]^{pr_2} 
& \PP \lv T\times \PP \rt T\ar[d]_{pr_2}\\
\PP \rt T' \ar[r]_{\PP (\sigma|_{\rt T}) = id_{\PP(\rt T)}} & \PP \rt T
}
\]
commutes. Consequently we get a  map 
\begin{equation}\label{eq:1.17Man}
\Ctrl_\Man(\sigma)\colon  \Ctrl_\Man(T) \to \Ctrl_\Man (T')
\end{equation}
which is the pull-back by $\PP (\sigma|_{\lv T} ) \times \PP
(\sigma|_{\rt T}) = \PP (\sigma|_{\lv T} ) \times id_{\PP \rt T}$ composed with the pushforward by $d
(\PP \sigma|_{\rt T})$:
\begin{equation}\label{eq:1.18Man}
  \Ctrl_\Man (\sigma) X := 
d(\PP \sigma|_{\rt T}) \circ X 
\circ (\PP (\sigma|_{\lv T} ) \times \PP (\sigma|_{\rt T}))\colon  
  \PP \lv T' \times \PP \rt T'\to T\PP \rt T'
\end{equation}
for any $(X\colon  \PP \lv T \times \PP \rt T \to T\PP \rt T) \in \Ctrl_\Man
(T)$.  Once again we leave it to the reader to check that $\Ctrl_\Man$
preserves the composition of morphisms, that is, that $\Ctrl_\Man$ is a functor.

\begin{remark}
  Suppose the phase space function $\cP\colon C_0\to \Man$ happens to
  take its values in the Euclidean spaces.  The the functors $\Ctrl
  =\Ctrl_\Euc$ and $\Ctrl_\Man$ are quite different:

  Given a tree $T$, we defined $\Ctrl_\Euc(T) = C^\infty(\PP \lv T,\PP
  \rt T)$, and identified it with a space of control 
  system by mapping a function $f\in \Ctrl_\Euc(T)$ to the control
  system $F\colon \PP\lv T\times \PP\rt T\to T\PP\rt T$ by $F(v,w) =
  (w,f(v))$.  In particular $F$ so defined does not depend on the
  points of the Euclidean space $\PP \rt T$.
  On the other hand we defined $\Ctrl_\Man(T)$ as the space of all
  smooth maps $F\colon \PP\lv T\times \PP \rt T\to T\PP\rt T$ with
  $F(u,m)\in T_m(\PP \rt T)$.  So such a control system $F$ does in general
  depend on the points of the Euclidean space $\PP \rt T$.

  As a consequence the spaces of virtual groupoid invariant vector
  fields $\V(\Gamma)$ and $\V_\Man(\Gamma)$ and their respective
  images $S_\Gamma (\V(\Gamma))$, $S_\Gamma (\V_\Man (\Gamma))$ are
  different even if the phase space function $\cP$ is the same (the official 
  definitions of $\V_\Man(\Gamma)$ and $S_\Gamma \colon \V_\Man(\Gamma) \to
  \chi (\PP\Gamma_0)$ are given in the next two subsections).  For
  example, for the graph
\[
\Gamma = \xy
 (-20,0)*++[o][F]{1}="1"; (0,0)*++[o][F]{2}="2"; 
{\ar@/^1.pc/  "2";"1"};  {\ar@{=>}@/^1.pc/ "1";"2"};
\endxy, 
\]
(this is a graph colored by a graph $C$ with one vertex and {\em two}
distinct edges) the vector fields in $S_\Gamma(\V(\Gamma))$ are of the
form
\begin{equation*}
  \dot x_1 = f(x_2),\quad \dot x_2 = g(x_1),
\end{equation*}
whereas the vector fields in $S_\Gamma(\V_\Man(\Gamma))$ are  of the form
\begin{equation*}
  \dot x_1 = k(x_2,x_1),\quad \dot x_2 = h(x_1,x_2).
\end{equation*}
\end{remark}

\subsection{Virtual groupoid invariant vector fields on manifolds}
\mbox{}\\[-8pt]

\noindent
In complete analogy with Definition~\ref{def:vgivf} we now have:

\begin{definition}[Virtual groupoid invariant vector fields on manifolds]
\label{def:vgivfMan} 
Fix the graph of colors $C=\{C_1 \toto C_0\}$ and a phase space
  function $\cP\colon  C_0 \to \Man$. Given a graph $\Gamma$ over $C$ we
  define the vector space of {\em virtual groupoid invariant vector
  fields} $\V_\Man (\Gamma)$ to be the limit of the restriction of the functor $\Ctrl_\Man$ to the 
groupoid $G(\Gamma)$:
\[
\V_\Man(\Gamma)  
:= \lim (\Ctrl_\Man|_{G(\Gamma)}\colon  G(\Gamma) \to \Vect).
\]
\end{definition}
\noindent
Note that since $\V_\Man(\Gamma)$ is a limit, it comes with a family of the
canonical projections
\[
\left\{\varpi_a\colon  \V_\Man(\Gamma) \to \Ctrl_\Man(I(a)) \right\}_{a\in\Gamma_0}.
\]
Immediately we have the analogue of Remark~\ref{rmrk:2.8.3}:
\begin{remark}\label{rmrk:2.8.3Man}
  The vector space $\V_\Man (\Gamma)$ has the following concrete
  description.  Consider the product $\prod _{a\in \Gamma_0}
  \Ctrl _\Man (I(a))$ with its canonical projections $p_b\colon \prod _{a\in
    \Gamma_0} \Ctrl _\Man (I(a)) \to \Ctrl _\Man (I(b))$, $b\in \Gamma_0$.  For a
  vector $X\in\prod _{a\in \Gamma_0} \Ctrl _\Man (I(a))$, denote the
  $a$-th component $p_a(X)$ of $X$ by $X_a$. Then
\[
\V_\Man (\Gamma) = \{X\in \prod _{a\in \Gamma_0} \Ctrl_\Man (I(a))\mid
\Ctrl _\Man (\sigma) X_a = X_b \textrm{ for all arrows }
I(a)\stackrel{\sigma}{\to }I(b)\in G(\Gamma)\}
\]
with the canonical projections $ \varpi_a\colon \V_\Man (\Gamma) \to
\Ctrl_\Man (I(a)) $ given by restrictions $p_a|_{\V_\Man (\Gamma)}$.
\end{remark}

\subsection{The relation of $\V_\Man (\Gamma)$ to vector fields on the phase
  space $\PP\Gamma_0$}\label{subsec:S-GammaMan}
\mbox{}\\[-8pt]

\noindent
Given a product $M= M_1\times\ldots \times M_k$ of manifolds, the space
of $C^\infty $ vector fields $\chi (M)$ is naturally a product of
control systems
\[
\chi (M_1\times\ldots \times M_k) = 
\prod_{i=1}^k \CT (p_i\colon  M_1\times\ldots \times M_k \to M_i).
\]
Therefore, given a phase space functor $\PP \colon  \op{(\FinGraph/C)} \to
\Man$ for each graph $\Gamma$ over the graph of colors $C$ we have
\[
\chi (\PP \Gamma_0) = 
\prod_{a\in \Gamma_0} \CT(\PP\kappa_a\colon  \PP \Gamma_0 \to \PP \{a\})
\]
where $\kappa_a\colon \{a\}\hookrightarrow \Gamma_0$ is the canonical
inclusion of a vertex $\{a\}$ into the set of vertices of the graph
$\Gamma$.  Note that the canonical projections 
\begin{equation}\label{eq:pi_aMan}
\pi_a\colon \prod_{a'\in \Gamma_0} \CT(\PP\kappa_{a'}\colon \PP
\Gamma_0 \to \PP \{a'\})\to \CT(\PP\kappa_a\colon \PP \Gamma_0 \to \PP
\{a\})
\end{equation}
are given by push-forwards:
\begin{equation}\label{eq:pi_aMan'}
\pi_a (X) = d\PP \kappa_a \circ X\equiv (d\PP \kappa_a )_* X.
\end{equation}
Since $\V _\Man (\Gamma)$ is a limit, for each node $a$ we also  have a canonical
projection
\[
\varpi_a \colon  \V _\Man \Gamma \to \Ctrl_\Man(I(a)).
\]
Recall that by definition $\Ctrl_\Man(I(a))$ is the space of all control
systems supported by the surjective submersion $\PP \lv I(a)\times \PP
\rt I(a) \xrightarrow{pr_2} \PP \rt I(a)$:
\[
\Ctrl_\Man (I(a)) = \CT(pr_2\colon\PP \lv I(a)\times \PP \rt I(a) \to \PP \rt I(a)).
\]
Since the set $I(a)_0$ of nodes of the input tree $I(a)$ is simply
the union of the leaves $\lv I(a)$ and the root $\rt I(a) =\{a\}$ we have
\begin{equation}
\Ctrl_\Man(I(a)) = \CT(\PP I(a)_0\to \PP \{a\}).
\end{equation}
Recall that for each input tree $I(a)$ of a graph $\Gamma$ we have a
map of graphs $\xi\colon I(a)\to\Gamma$ (q.v.\ \ref{empt:xi}), hence a map 
\[
\xi\colon  I(a)_0 \to \Gamma_0
\]
of finite sets over the set of colors $C_0$.  Applying the phase space
functor $\PP$ we obtain canonical maps of manifolds
\[
\PP(\xi)\colon  \PP\Gamma_0 \to \PP I(a)_0.
\]
The pullback by $\PP(\xi)$ gives
\[
\PP(\xi)^*\colon \Ctrl_\Man (I(a))= \CT(\PP I(a)_0\to \PP \{a\}) \to
\CT(\PP\Gamma_0\to \PP \{a\}).
\]
By the universal properties of the product 
\[
\{ \pi_a \colon \chi (\PP\Gamma_0)\to \CT (\PP \Gamma_0 \to \PP \{a\})
\}_{a\in \Gamma_0}
\]
there a  unique canonical map $S_\Gamma\colon  \V_\Man (\Gamma) \to \chi (\PP \Gamma_0)$ 
making 
the diagram
\begin{equation} \label{eq:4.3.4}
\xy
(-22, 9)*+{\V_\Man \Gamma}="1"; 
(24,9)*+{\chi (\PP \Gamma_0)}="2";
(-22, -10)*+{\Ctrl_\Man (I(a))}="3"; 
(24,-10)*+{\CT(\PP\Gamma_0, \PP \{a\})}="4";
{\ar@{-->}^{\exists ! S_\Gamma} "1";"2"};
{\ar@{->}_{\varpi_a} "1";"3"};
{\ar@{->}^{\pi_a} "2";"4"};
{\ar@{->}_{\PP(\xi)^*} "3";"4"};
\endxy
\end{equation}
commute.

\subsection{The assignment $\Gamma\mapsto \V_\Man (\Gamma)$ extends to
  a functor} 
\mbox{}\\[-8pt]

\noindent
As in the case of groupoid invariant smooth vector fields on
Euclidean spaces and in the case of groupoid invariant linear vector
fields we have
\begin{theorem}\label{thmA''}
  The map $\V_\Man$ that assigns to each finite graph $\Gamma$ over
  $C$ the vector space of $G(\Gamma)$ invariant virtual  vector
  fields on $\PP\Gamma_0$ extends to a contravariant functor
\[
\V_\Man\colon  \opet{(\FinGraph/C)} \to \Vect.
\]
\end{theorem}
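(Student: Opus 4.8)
The plan is to run the same argument used for Theorem~\ref{thmA} and Theorem~\ref{thmA'}, since the only feature of the control functor that entered those proofs was its functoriality on the groupoid $\FinTree/C$, and $\Ctrl_\Man$ enjoys exactly that property. First I would record that an \'etale map $\varphi\colon \Gamma\to\Gamma'$ of graphs over $C$ induces, for each node $a$, an \emph{isomorphism} of input trees $\varphi_a\colon I(a)\to I(\varphi(a))$ (by the very definition of ``\'etale''), hence a fully faithful functor $G(\varphi)\colon G(\Gamma)\to G(\Gamma')$ by Lemma~\ref{lemma:ff}, and that for every arrow $I(a)\xrightarrow{\sigma}I(b)$ in $G(\Gamma)$ the square \eqref{eq:3.1} commutes in $\FinTree/C$.

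Next I would apply $\Ctrl_\Man$ to that square. Because $\Ctrl_\Man\colon \FinTree/C\to\Vect$ is a functor and $\varphi_a,\varphi_b$ are isomorphisms of trees, $\Ctrl_\Man(\varphi_a)$ and $\Ctrl_\Man(\varphi_b)$ are linear isomorphisms and the resulting square
\[
\xymatrix@C+2cm{
\Ctrl_\Man(I(a)) \ar[r]^{\Ctrl_\Man(\sigma)} \ar[d]^{\Ctrl_\Man(\varphi_a)}
&\Ctrl_\Man(I(b))\ar[d]^{\Ctrl_\Man(\varphi_b)}\\
\Ctrl_\Man(I(\varphi(a)))\ar[r]_{\Ctrl_\Man(G(\varphi)\sigma)} & \Ctrl_\Man(I(\varphi(b)))
}
\]
commutes (this is the $\Man$-analogue of \eqref{eq:3.2}). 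In categorical language this says the triangle with vertices $G(\Gamma)$, $G(\Gamma')$, $\Vect$, legs $\Ctrl_\Man|_{G(\Gamma)}$, $\Ctrl_\Man|_{G(\Gamma')}$ and side $G(\varphi)$ is $2$-commutative, the natural isomorphism $\phi\colon \Ctrl_\Man|_{G(\Gamma)}\Rightarrow \Ctrl_\Man|_{G(\Gamma')}\circ G(\varphi)$ having components $\Ctrl_\Man(\varphi_a)^{-1}$ (cf.\ Remark~\ref{rmrk:2.10.11}); that is, it is a morphism in $\Cat/\Vect$ (q.v.\ \ref{empt:5.1.1}). I would then check, as in the proof of Theorem~\ref{thmA'}, that the assignment $\mathcal{G}_\Man\colon \et{(\FinGraph/C)}\to \Cat/\Vect$ sending $\varphi$ to this $2$-commutative triangle preserves composition, and compose it with the contravariant functor $L\colon \op{\Cat/\Vect}\to\Vect$ of \ref{prop:5.1.11} to obtain the desired contravariant functor $\V_\Man=\mathcal{G}_\Man\circ L\colon \opet{(\FinGraph/C)}\to\Vect$; concretely, $\V_\Man(\varphi)\colon \V_\Man(\Gamma')\to\V_\Man(\Gamma)$ is the unique linear map produced by the universal property of the limit $\V_\Man(\Gamma)$, exactly as in \eqref{eq:3.3}. (Alternatively one can bypass the $\Cat/\Vect$ formalism and build $\V_\Man(\varphi)$ and verify $\V_\Man(\psi\circ\varphi)=\V_\Man(\varphi)\circ\V_\Man(\psi)$ directly from the universal property, as in the proof of Theorem~\ref{thmA}.)

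The one place requiring genuine (if routine) work, and hence the main obstacle, is establishing that $\Ctrl_\Man$ is indeed a well-defined functor on $\FinTree/C$ --- that is, that the pushforward--pullback hybrid formula \eqref{eq:1.18Man} for $\Ctrl_\Man(\sigma)$ respects composition of tree isomorphisms; this was left to the reader around \eqref{eq:1.17Man}--\eqref{eq:1.18Man}. The point simplifying this is Remark~\ref{rmrk:2.11}: the root of any tree is a singleton over $C_0$, so $\PP(\sigma|_{\rt T})=\mathrm{id}$ and the pushforward factor $d(\PP\sigma|_{\rt T})$ is the identity, leaving $\Ctrl_\Man(\sigma)X = X\circ(\PP(\sigma|_{\lv T})\times\mathrm{id})$, whose functoriality in $\sigma$ follows from that of $\PP$ exactly as for $\Ctrl_\Euc$. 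Once this is in hand, every remaining step is the formal machinery already assembled in Section~\ref{sec:2slice}, and the statement follows.
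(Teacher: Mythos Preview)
Your proposal is correct and follows essentially the same route as the paper's own proof: both mimic the argument of Theorem~\ref{thmA'} by applying $\Ctrl_\Man$ to the square \eqref{eq:3.1}, packaging the result as a $2$-commutative triangle in $\Cat/\Vect$, and composing the resulting functor $\mathcal{G}$ with $L$ from Proposition~\ref{prop:5.1.11}. Your additional discussion of why $\Ctrl_\Man$ respects composition (via Remark~\ref{rmrk:2.11}) fills in a detail the paper explicitly leaves to the reader, so if anything your writeup is slightly more complete.
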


\begin{proof}
We mimic the proof of Theorem~\ref{thmA'}:
Consider an \'etale map of graphs over $C$: 
\[
\xy
(-6, 6)*+{\Gamma} ="1"; 
(6, 6)*+{\Gamma'} ="2";
(0,-4)*+{C }="3";
{\ar@{->}_{ \alpha} "1";"3"};
{\ar@{->}^{\varphi} "1";"2"};
{\ar@{->}^{\beta} "2";"3"};
\endxy ,
\] 
that is, a morphism in the category $\et{(\FinGraph/C)}$.  Given an
arrow $I(a)\stackrel{\sigma}{\longrightarrow}I(b)$ in the groupoid
$G(\Gamma)$ we have a commuting diagram \eqref{eq:3.1}
\[
\xymatrix{ 
I(a) \ar[r]^{\sigma} \ar[d]^{\varphi_a} & I(b)\ar[d]^{\varphi_b}\\
I(\varphi(a))\ar[r]_{G(\varphi)\sigma} & I(\varphi(b))
}
\] 
in the category of finite trees over $C$.  Applying the control
functor $\Ctrl_\Man$ gives us a commuting diagram in the category
of vector spaces (q.v.\ \eqref{eq:3.2} and \eqref{eq:3.2'}):
\begin{equation}\label{eq:3.2''}
  \xymatrix@C+2cm{ 
    \Ctrl_\Man(I(a)) \ar[r]^{\Ctrl_\Man(\sigma)} 
\ar[d]_{\Ctrl_\Man(\varphi_a)} 
    &\Ctrl_\Man( I(b))\ar[d]^{\Ctrl_\Man(\varphi_b)}\\
    \Ctrl_\Man(I(\varphi(a))
\ar[r]_{\Ctrl_\Man(G(\varphi)\sigma)} & \Ctrl_\Man(I(\varphi(b)),
  }
\end{equation}
which, in turn, gives us the 2-commutative diagram
\[
\xy
(-10, 10)*+{G(\Gamma)} ="1"; 
(10, 10)*+{G(\Gamma')} ="2";
(0,-6)*+{\Vect }="3";
{\ar@{->}_{ \Ctrl_\Man |_{G(\Gamma)}} "1";"3"};
{\ar@{->}^{G(\varphi)} "1";"2"};
{\ar@{->}^{\Ctrl_\Man|_{G(\Gamma')}} "2";"3"};
{\ar@{=>}_<<<{\scriptstyle \phi} (4,6)*{};(-2,2)*{}} ;  
\endxy ,      
\] 
q.v.\ Remark~\ref{rmrk:2.10.11}.  This 2-commuting triangle is a
morphism in $\Cat/\Vect$ (see \ref{empt:5.1.1}).  It is not hard to
check that the assignment
\[
\mathcal{G}: (\FinGraph/C)_{et} \to \Cat/\Vect, \quad
\xy
(-6, 6)*+{\Gamma} ="1"; 
(6, 6)*+{\Gamma'} ="2";
(0,-4)*+{C }="3";
{\ar@{->}_{ \alpha} "1";"3"};
{\ar@{->}^{\varphi} "1";"2"};
{\ar@{->}^{\beta} "2";"3"};
\endxy  
\mapsto
\xy
(-10, 10)*+{G(\Gamma)} ="1"; 
(10, 10)*+{G(\Gamma')} ="2";
(0,-6)*+{\Vect }="3";
{\ar@{->}_{ \Ctrl_\Man |_{G(\Gamma)}} "1";"3"};
{\ar@{->}^{G(\varphi)} "1";"2"};
{\ar@{->}^{\Ctrl_\Man|_{G(\Gamma')}} "2";"3"};
{\ar@{=>}_<<<{\scriptstyle \phi} (4,6)*{};(-2,2)*{}} ;   
\endxy          
\] 
preserves the composition of morphisms and thus is a functor.
Composing the functor $\mathcal{G}$ with the contravariant functor
$L\colon \op{\Cat/\Vect} \to \Vect$ (see \ref{prop:5.1.11}) gives us the desired
contravariant functor
\[
\V_\Man = \mathcal{G}\circ L\colon  {(\FinGraph/C)_{et}}^{op} \to \Vect.
\] 
\end{proof} 

\begin{remark}\label{rmrk:4alphaMan}
  Unraveling the definition of $\V_\Man$ we see that for any \'etale
  map of graphs $\varphi\colon \Gamma\to \Gamma'$, any node $a$ of $\Gamma$
  and any virtual groupoid invariant vector field $w\in \V_\Man
  (\Gamma')$ we have
\begin{equation}\label{eq:444}
\varpi_a (\V(\varphi)w) = \Ctrl _\Man (\varphi_a)\inv (\varpi_{\varphi (a)} w)
\end{equation}
Compare with \eqref{eq4:alpha} and \eqref{eq:3.3}.
\end{remark}

\subsection{The  combinatorial category  $\cV^\Man$ of dynamical systems on manifolds}
\mbox{}\\[-8pt]

\noindent
Associated to the functor $\V_\Man$ we have the category of elements
$\cV^\Man$ and a functor $\cV^\Man \to \op{(\FinGraph/C)_{et}}$.  As
in the case of $\cV^\Euc$ the objects of $\cV^\Man$ are pairs of the
form $(\Gamma, v)$ with $\Gamma$ a finite graph over $C$ and $v\in
\V_\Man(\Gamma)$.  The morphism in $\cV^\Man$ are the tuples of the
form $(h,(v,\Gamma), (v', \Gamma'))$ with $h\colon \Gamma'\to \Gamma$
a morphism in $\FinGraph/C$ and $\V_\FinGraph (h)v = v'$.
As in the case of $C^\infty$ vector fields on Euclidean spaces the
maps 
\[
\{S_\Gamma\colon  \V_\Man (\Gamma) \to \chi (\PP
\Gamma_0, \PP \Gamma_0)\}_{\Gamma\in (\FinGraph/C)_0}
\]
 assemble into a
functor $S$ from the category of elements $\cV^\Man$ to the
category $\mathsf{Dynamical\,\, Systems}$ of  dynamical systems.
The key technical result in proving that $S$ is a functor is the
analogue of Theorem~\ref{thm:sync}:

\begin{theorem}\label{thm:sync-Man}
Let $w\in \V_\Man(\Gamma')$ be a virtual dynamical system on $\PP\Gamma_0$.
For each \'etale map of graphs 
\[
\varphi\colon  \Gamma \to \Gamma'
\]
 the diagram 
\begin{equation}\label{eq:4.1Man}
\xymatrix{
 \PP(\Gamma'_0) \ar[r]^{d \PP\varphi } 
&   \PP(\Gamma_0)   \\
\PP(\Gamma'_0) \ar[r]^{\PP \varphi}  \ar[u]^{S_{\Gamma'}(w)}
&  \PP(\Gamma_0)\ar[u]_{S_\Gamma(\V (\varphi)w)} }
\end{equation}
commutes: for any point $x\in \PP (\Gamma'_0)$
\begin{equation}\label{eq:1.36Man}
(d \PP\varphi) _x  \left( S_{\Gamma'}(w) \, (x) \right)=
 S_\Gamma(\V (\varphi)w)\,(\PP\varphi (x)).
\end{equation}
In other words, 
\[
\PP (\varphi)\colon  (\PP(\Gamma'), S_{\Gamma'}(w))\to  (\PP(\Gamma), S_\Gamma(\V (\varphi)w))
\]
is a map of dynamical systems.
\end{theorem}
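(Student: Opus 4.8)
The plan is to mimic the proof of Theorem~\ref{thm:sync} essentially verbatim, systematically replacing the function spaces $\Hom_\Euc(-,-)$ by the spaces $\CT(-)$ of control systems (Definition~\ref{def:ctrl-Man}) and ordinary composition by the pullback--pushforward operation of~\eqref{eq:1.18Man}. The one structural novelty is that $\Ctrl_\Man(\sigma)$ carries, besides a pullback, a pushforward factor $d\PP(\sigma|_{\rt T})$; but for the input-tree isomorphisms $\varphi_a\colon I(a)\to I(\varphi(a))$ induced by an \'etale map $\varphi$ (q.v.~\eqref{eq:varphi_a}) the root diffeomorphism $\PP(\varphi_a|_{\rt})$ is the identity by Remark~\ref{rmrk:2.11}, so that factor disappears and the argument collapses onto the Euclidean one. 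Concretely, with that simplification $\Ctrl_\Man(\varphi_a)X=X\circ\PP\varphi_a$ where $\PP\varphi_a\colon\PP I(\varphi(a))_0\to\PP I(a)_0$, so from Remark~\ref{rmrk:4alphaMan} one obtains $\varpi_a(\V(\varphi)w)\circ\PP\varphi_a=\varpi_{\varphi(a)}w$, the exact analogue of~\eqref{eq4:2alpha}.

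First I would reduce to a nodewise statement. Since $\PP\Gamma_0=\prod_{a\in\Gamma_0}\PP\{a\}$ with projections $\PP\kappa_a$, and the tangent functor $T$ preserves finite products, the family $\{d\PP\kappa_a\colon T\PP\Gamma_0\to T\PP\{a\}\}_{a\in\Gamma_0}$ jointly detects equality of maps into $T\PP\Gamma_0$. Hence~\eqref{eq:4.1Man} commutes if and only if, for every node $a$ of $\Gamma$,
\[
d\PP\kappa_a\circ d\PP\varphi\circ S_{\Gamma'}(w)
= d\PP\kappa_a\circ S_\Gamma(\V(\varphi)w)\circ\PP\varphi
\qquad\text{as maps }\PP\Gamma'_0\to T\PP\{a\}.
\]

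Next I would assemble the same chain of commuting squares used in the proof of Theorem~\ref{thm:sync}, but now read on the full node sets $I(a)_0$ rather than on $\lv I(a)$ (this is forced by the definition of $\Ctrl_\Man$, which sees the root phase space as well). \'Etaleness gives the colored-tree isomorphism $\varphi_a\colon I(a)\to I(\varphi(a))$; applying the contravariant functor $\PP$ to the square~\eqref{eq:(1)} of graphs over $C$ yields $\PP\xi\circ\PP\varphi=\PP\varphi_a\circ\PP\xi\colon\PP\Gamma'_0\to\PP I(a)_0$ (the analogue of~\eqref{eq:(1'')}), and the factorisation $\{a\}\hookrightarrow I(a)_0\xrightarrow{\xi}\Gamma_0$ of $\kappa_a$ together with~\eqref{eq:(2)}, after applying $\PP$ and differentiating, yields $d\PP\kappa_a\circ d\PP\varphi=d\PP(\varphi|_{\{a\}})\circ d\PP\kappa_{\varphi(a)}$ with $d\PP(\varphi|_{\{a\}})=\mathrm{id}$ (the analogue of~\eqref{eq:(4)}). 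The remaining local inputs are: the defining property of $S_\Gamma$, which by~\eqref{eq:4.3.4} together with~\eqref{eq:pi_aMan'} reads $d\PP\kappa_a\circ S_\Gamma(v)=\varpi_a(v)\circ\PP\xi$ for $v\in\V_\Man(\Gamma)$ (and likewise over $\Gamma'$); and the identity $\varpi_a(\V(\varphi)w)\circ\PP\varphi_a=\varpi_{\varphi(a)}w$ recorded above.

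With these in hand the nodewise identity follows from the same computation as in Theorem~\ref{thm:sync}:
\begin{align*}
d\PP\kappa_a\circ S_\Gamma(\V(\varphi)w)\circ\PP\varphi
&=\varpi_a(\V(\varphi)w)\circ\PP\xi\circ\PP\varphi
=\varpi_a(\V(\varphi)w)\circ\PP\varphi_a\circ\PP\xi\\
&=\varpi_{\varphi(a)}w\circ\PP\xi
=d\PP\kappa_{\varphi(a)}\circ S_{\Gamma'}(w)
=d\PP\kappa_a\circ d\PP\varphi\circ S_{\Gamma'}(w),
\end{align*}
where the steps use, in order, the defining property of $S_\Gamma$, the $\PP$-square, the identity $\varpi_a(\V(\varphi)w)\circ\PP\varphi_a=\varpi_{\varphi(a)}w$, the defining property of $S_{\Gamma'}$, and the tangent-bundle square together with $d\PP(\varphi|_{\{a\}})=\mathrm{id}$. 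The reformulation~\eqref{eq:1.36Man} and the ``map of dynamical systems'' conclusion are then immediate. The only point that genuinely needs attention --- the hard part, such as it is --- is checking that the control functor $\Ctrl_\Man$ and the structure maps $S_\Gamma$ interact correctly with the extra root variable, i.e.\ that all of the squares above commute when one uses $I(a)_0$ in place of $\lv I(a)$; once one observes that the pushforward factor $d\PP(\sigma|_{\rt T})$ is trivial for every tree isomorphism arising from an \'etale graph map, this reduces to the bookkeeping already carried out in the Euclidean case, and one may in fact just say the proof is ``completely analogous to the proof of Theorem~\ref{thm:sync}.''
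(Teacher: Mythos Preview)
Your proposal is correct and follows essentially the same argument as the paper's own proof: the reduction to a nodewise identity via the projections $d\PP\kappa_a$, the commuting squares \eqref{eq:(1)Man}--\eqref{eq:(4)Man}, the defining relations $d\PP\kappa_a\circ S_\Gamma(v)=\varpi_a(v)\circ\PP\xi$, and the final chain of equalities are identical. The only cosmetic difference is that the paper records the analogue of \eqref{eq4:2alpha} as $\varpi_a(\V(\varphi)w)\circ\PP\varphi_a = d(\PP\varphi|_{\{a\}})\circ\varpi_{\varphi(a)}w$ and cancels the factor $d(\PP\varphi|_{\{a\}})$ only at the end via \eqref{eq:(4)Man}, whereas you invoke Remark~\ref{rmrk:2.11} at the outset to set it to the identity; the two routes are equivalent.
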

\begin{proof}
  We modify the proof of Theorem~\ref{thm:sync} to take into account
  the difference between the functors $\Ctrl= \Ctrl_\Euc$ and
  $\Ctrl_\Man$.  As before, for each node $a$ of a graph $\Gamma$ over
  the graph of colors $C$ we have a canonical map $\xi\colon I(a)\to \Gamma$
  from the input tree $I(a)$ to the graph
  $\Gamma$. As above we use the same letter $\xi$ for all nodes of
  $\Gamma$ and for all graphs over $C$ suppressing both dependencies.

  If $\varphi\colon \Gamma\to \Gamma'$ is an \'etale map of graphs over
  $C$, then by definition of ``\'etale'' for each node $a$ of $\Gamma$
  we have an induced isomorphism of graphs over $C$:
\[
  \varphi_a\colon  I(a) \to I(\varphi(a)).
\]
Moreover, the diagram of graphs over $C$
\begin{equation}\label{eq:(1)Man}
\xy
(-14, 10)*+{I(a)}="1"; 
(14,10)*+{\Gamma}="2";
(-14, -10)*+{I(\varphi(a))}="4"; 
(14,-10)*+{\Gamma'}="5";
{\ar@{->}^{\xi} "1";"2"};
{\ar@{->}^{\varphi_a} "1";"4"};
{\ar@{->}_{\varphi} "2";"5"};
{\ar@{->}_{\xi} "4";"5"};
\endxy
\end{equation}
commutes.   Hence we have a commuting diagram
\begin{equation}\label{eq:(1'')Man}
\xy
(-14, 10)*+{\PP I(a)_0}="1"; 
(14,10)*+{\PP\Gamma_0}="2";
(-14, -10)*+{\PP I(\varphi(a))_0}="4"; 
(14,-10)*+{\PP\Gamma'_0}="5";
{\ar@{->}_{\PP \xi} "2";"1"};
{\ar@{->}^{\PP \varphi_a} "4";"1"};
{\ar@{->}_{\PP \varphi} "5";"2"};
{\ar@{->}_{\PP\xi} "5";"4"};
\endxy
\end{equation}
in our category $\Man$ of phase spaces.  As before let 
\[
\kappa_a\colon  \{a\} \hookrightarrow \Gamma_0
\]
denote the canonical inclusion in the category of finite sets over
$C_0$.  We then have a commuting diagram in $\Finset/C_0$
\begin{equation}\label{eq:(2)Man}
\xy
(-14, 10)*+{\{a\}}="1"; 
(14,10)*+{\Gamma_0}="2";
(-14, -10)*+{\{\varphi(a)\}}="4"; 
(14,-10)*+{\Gamma'_0,}="5";
{\ar@{^{(}->}^{\kappa_a} "1";"2"};
{\ar@{->}_{\varphi_a|_{\{a\}}= \varphi|_{\{a\}}} "1";"4"};
{\ar@{->}^{\varphi} "2";"5"};
{\ar@{^{(}->}_{\kappa_{\varphi(a)}} "4";"5"};
\endxy
\end{equation}
hence a commuting diagram in $\Man$:
\begin{equation}\label{eq:(3)Man}
\xy
(-14, 10)*+{\PP \{a\}}="1"; 
(14,10)*+{\PP \Gamma_0}="2";
(-14, -10)*+{\PP \{\varphi(a)\}}="4"; 
(14,-10)*+{\PP \Gamma'_0,}="5";
{\ar@{->>}_{\PP \kappa_a} "2";"1"};
{\ar@{->}^{\PP (\varphi|_{ \{a\}})} "4";"1"};
{\ar@{->}_{\PP \varphi} "5";"2"};
{\ar@{->>}^{\PP \kappa_{\varphi(a)}} "5";"4"};
\endxy
\end{equation}
Differentiating \eqref{eq:(3)Man} we get the commuting diagram
\begin{equation}\label{eq:(4)Man}
\xy
(-14, 10)*+{T\PP \{a\}}="1"; 
(14,10)*+{T\PP \Gamma_0}="2";
(-14, -10)*+{T\PP \{\varphi(a)\}}="4"; 
(14,-10)*+{T\PP \Gamma'_0,}="5";
{\ar@{->>}_{d\PP \kappa_a} "2";"1"};
{\ar@{->}^{d\PP (\varphi|_{ \{a\}})} "4";"1"};
{\ar@{->}_{d\PP \varphi} "5";"2"};
{\ar@{->>}^{d\PP \kappa_{\varphi(a)}} "5";"4"};
\endxy
\end{equation}
of tangent bundles.  Recall that the projection 
\[
\pi_a\colon \chi (\PP \Gamma_0) \to \CT(\PP \Gamma_0\to \PP \{a\})
\]
is given by 
\[
\pi_a (X) = d (\PP \kappa_a) \circ X,
\]
(q.v.\ \eqref{eq:pi_aMan'}). 
By definition of the maps $S_\Gamma$  (q.v.
\eqref{eq:2.2}) 
we have
\begin{equation}\label{eq:4*Man}
d\PP \kappa _a \circ S_\Gamma (v) = \varpi_a (v)\circ \PP (\xi) 
\quad
\textrm{ for any $a\in \Gamma_0$ and any }v\in \V(\Gamma) .
\end{equation}
Similarly for $\Gamma'$ we have
\begin{equation}\label{eq:4*'Man}
  d\PP \kappa _{\varphi(a)} \circ S_{\Gamma'} (v') 
  = \varpi_{\varphi (a)}(v') \circ \PP( \xi) \quad
  \textrm{ for any $a\in \Gamma_0$ and any }v'\in \V(\Gamma'). 
\end{equation}
Recall that the definition  of $\V_\Man (\varphi)$ implies that
\begin{equation}\label{eq4:alphaMan}
\varpi_a (\V(\varphi)w) = \Ctrl (\varphi_a)\inv (\varpi_{\varphi (a)}
w) 
\end{equation}
(q.v.\ Remark~\ref{rmrk:4alphaMan}).  Since the set of nodes of the
input tree $I(a)$ is the union of its root and its leaves
\eqref{eq:1.18Man} and \eqref{eq4:alphaMan} imply that
\begin{equation}\label{eq4:2alphaMan}
\varpi_a(\V(\varphi)w) \circ \PP \varphi_a = 
d(\PP\varphi|_{\{a\}})\circ \varpi_{\varphi(a)}w
\end{equation}
for any node $a$ of $\Gamma$ and any $w\in\V(\Gamma')$.

Since the vector space of vector fields $\chi (\PP\Gamma_0)$ is
the product
\[
\chi (\PP\Gamma_0)= \prod_{a\in \Gamma_0} CT(\PP \Gamma_0\to \PP \{a\})
\]
the diagram \eqref{eq:4.1Man} commutes if and only if 
\[
\pi_a (S_\Gamma (\V_\Man (\varphi) w) \circ \PP \varphi ) = 
\pi_a (d\PP \varphi \circ S_{\Gamma'} (w))
\]
for every $a\in \Gamma_0$.
We now compute: 
\begin{eqnarray*}
\pi_a (S_\Gamma (\V(\varphi) w) \circ \PP \varphi )
  &= &  d\PP\kappa_a \circ S_\Gamma (\V (\varphi) w) \circ \PP\varphi\\
&= & 
\left(\varpi_a(\V(\varphi)w)\circ \PP \xi\right)
\circ \PP\varphi, 
  \quad \textrm{ by (\ref{eq:4*Man})} \\
& = &\varpi_a(\V(\varphi)w)\circ 
  \left(\PP \xi \circ \PP\varphi\right)\\
&=& \varpi_a(\V(\varphi)w)\circ \PP \varphi_a \circ \PP \xi
  \quad \textrm{ since \eqref{eq:(1'')Man} commutes}\\
& = &  d (\PP \varphi|_{\{a\}})\circ \varpi_{\varphi(a)} w\circ \PP \xi,\quad 
\textrm{ by (\ref{eq4:2alphaMan})}\\ 
 &=& d \PP (\varphi|_{\{a\}}) \circ d\PP\kappa_{\varphi(a)} 
  \circ S_{\Gamma'} (w) \quad \textrm{ by (\ref{eq:4*'Man})}\\
&=&d\PP \kappa_a \circ d\PP \varphi \circ  S_{\Gamma'} (w) 
  \quad \textrm{since \eqref{eq:(4)Man} commutes} \\ 
  &=& \pi_a (d\PP \varphi ( S_{\Gamma'} (w))).
\end{eqnarray*}
\end{proof}
\noindent
Consequently we have (cf.\ Sections~\ref{sec:VEuc} and
\ref{subsec:cVFinVect})
\begin{theorem}\label{thm:2.12.3Man}
  Let $\cV^\Man$ denote the category of virtual groupoid-invariant
  dynamical systems constructed above, $\pi\colon \cV^\Man \to
  \opet{(\FinGraph/C)}$ the canonical projection, $U\colon
  \mathsf{Dynamical \,\,Systems} \to \Man$ the forgetful functor, and
  $\PP$ the phase space functor extended to finite graphs (q.v.\
  Remark~\ref{rmrk:PonFinGraph}). There exists a functor $S\colon
  \cV^\Man \to \mathsf{Dynamical \,\, Systems}$ making the
  diagram
\begin{equation}\label{eq:star-Man}
\xymatrix{
\cV^\Man  \ar[r]^{S \quad\quad } \ar[d]_{\pi}
&  \mathsf{Dynamical \,\,Systems}\ar[d]^{U}  \\
 \opet{(\FinGraph/C)}  \ar[r]^{\quad\PP }
&  \Man }
\end{equation}
commute.  On objects
\[
S(v,\Gamma) = (\PP \Gamma_0, S_\Gamma (v));
\]
on arrows
\[
S(h,(v,\Gamma), (v', \Gamma')) = (\PP \Gamma_0, S_\Gamma(v))
\stackrel{\PP h}{\to} (\PP\Gamma', S_{\Gamma'} (v)).
\]
\end{theorem}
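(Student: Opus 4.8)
The plan is to mimic, almost word for word, the proof of Theorem~\ref{thm:2.12.3}, replacing $\V$, $\Ctrl$, $\Euc$, and Theorem~\ref{thm:sync} by their manifold counterparts $\V_\Man$, $\Ctrl_\Man$, $\Man$, and Theorem~\ref{thm:sync-Man}. The category of elements $\cV^\Man$ of the presheaf $\V_\Man\col\opet{(\FinGraph/C)}\to\Vect$ and the projection $\pi\col\cV^\Man\to\opet{(\FinGraph/C)}$ have already been described, so the only thing to construct is the functor $S$ and to check that the square \eqref{eq:star-Man} commutes. On objects I would set $S(v,\Gamma):=(\PP\Gamma_0,\,S_\Gamma(v))$, which is a legitimate object of $\mathsf{Dynamical\,\,Systems}$ since $S_\Gamma(v)\in\chi(\PP\Gamma_0)$ by the construction in \eqref{eq:4.3.4}. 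A morphism $(h,(v,\Gamma),(v',\Gamma'))$ of $\cV^\Man$ is given by an \'etale map of colored graphs together with the condition $\V_\Man(h)v=v'$; since it lies over an arrow of the opposite category, the underlying \'etale map goes $h\col\Gamma'\to\Gamma$, and since $\PP$ is contravariant we have $\PP h\col\PP\Gamma_0\to\PP\Gamma'_0$. I would therefore set $S(h,(v,\Gamma),(v',\Gamma')):=\PP h$, regarded as an arrow $(\PP\Gamma_0,S_\Gamma(v))\to(\PP\Gamma'_0,S_{\Gamma'}(v'))$.

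The substantive point to verify is that $\PP h$ is actually a morphism of dynamical systems, i.e.\ that $d(\PP h)\circ S_\Gamma(v)=S_{\Gamma'}(v')\circ\PP h$. This is precisely Theorem~\ref{thm:sync-Man} applied to the \'etale map $h\col\Gamma'\to\Gamma$ and to the virtual groupoid-invariant vector field $v\in\V_\Man(\Gamma)$: formula \eqref{eq:1.36Man} gives $d(\PP h)\circ S_\Gamma(v)=S_{\Gamma'}(\V_\Man(h)v)\circ\PP h$, and $\V_\Man(h)v=v'$ by the defining condition of a morphism of $\cV^\Man$. The only subtlety here is variance bookkeeping: one must take the source/target of $h$ to match the source/target in the statement of Theorem~\ref{thm:sync-Man} (there the \'etale map runs from the ``$\Gamma$'' over which $S_\Gamma(\V(\varphi)w)$ lives to the ``$\Gamma'$'' over which $w$ lives), and once this matching is made the computation is immediate.

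Functoriality of $S$ then follows formally. Identities are preserved because $\PP(\mathrm{id}_\Gamma)=\mathrm{id}_{\PP\Gamma_0}$, and composition is preserved because $\PP(h\circ g)=\PP g\circ\PP h$ for composable \'etale maps $\Gamma''\xrightarrow{g}\Gamma'\xrightarrow{h}\Gamma$ (contravariance of $\PP$, q.v.\ Remark~\ref{rmrk:PonFinGraph}), which matches the composition law in $\cV^\Man$ transported to $\mathsf{Dynamical\,\,Systems}$. Finally, the square \eqref{eq:star-Man} commutes on the nose: on objects $U(S(v,\Gamma))=\PP\Gamma_0=\PP(\pi(v,\Gamma))$, and on a morphism $(h,(v,\Gamma),(v',\Gamma'))$ both $U\circ S$ and $\PP\circ\pi$ return $\PP h$. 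I do not anticipate a real obstacle: all the analytic content --- the compatibility of differentiating the pullback construction defining $S_\Gamma$ with the map $\PP h$ --- is already encapsulated in Theorem~\ref{thm:sync-Man}, whose proof was given above, and the remainder is the same routine diagram chase and variance check as in the Euclidean case, Theorem~\ref{thm:2.12.3}.
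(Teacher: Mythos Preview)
Your proposal is correct and follows exactly the paper's own approach: the paper does not give a separate proof of this theorem but simply states it as a consequence of Theorem~\ref{thm:sync-Man}, referring the reader back to Sections~\ref{sec:VEuc} and \ref{subsec:cVFinVect} for the identical construction in the Euclidean and linear cases. Your write-up is in fact more explicit than the paper's, and your careful variance bookkeeping (matching the source and target of $h$ to those of $\varphi$ in Theorem~\ref{thm:sync-Man}) is precisely the only point that requires attention.
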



\section{Groupoid invariance versus group invariance} \label{sec:inv}

\subsection{}
\mbox{}\\[-8pt]

\noindent
Suppose the  group of symmetries $H$a colored graph $\Gamma\to C$, 
\[
H =\{ \varphi:\Gamma \to \Gamma\mid \varphi \textrm{ is an isomorphism
  of graphs over } C\},
\]
is nontrivial.
Then given a phase function $\cP:C_0 \to \Euc$ we can associate to the
graph $\Gamma$ two kinds of invariant vector fields on the phase space
$\PP \Gamma_0$: the space virtual groupoid invariant vector fields
$\V(\Gamma)$  and the space $\chi(\PP \Gamma_0)^H$ of $H$-invariant vector fields.
In this section we show that the image of the map 
\[
S_\Gamma \colon \V(\Gamma) \to \chi (\PP\Gamma_0)
\]
always lands inside the space of $H$-invariant vector fields.  In
other words groupoid invariant vector fields are always group
invariant.  However, it is easy enough to construct examples (and we
do so in this section) where the inclusion $S_\Gamma (\V(\Gamma))
\subset \chi (\PP \Gamma_0)^H$ is strict.  Consequently there may be
many more group invariant vector fields than there are groupoid
invariant vector fields, and what may be generic for group invariant
vector fields need not be generic for groupoid invariant vector
fields.
\begin{remark}
  The same results with the essentially the same proofs hold in the category of manifolds (when $\cP$ takes values in $\Man$) and in the category of finite dimensional vector spaces (when $\cP$ takes values in $\FinVect$).
\end{remark} 
\begin{proposition}\label{prop:3.7}
Let  $\varphi\colon \Gamma \to \Gamma$ be an isomorphism of graphs over $C$.
Then $\V(\varphi)\colon \V\Gamma\to \V\Gamma$ is the identity map.
\end{proposition}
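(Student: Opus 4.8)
The plan is to show that the identity map $\mathrm{id}_{\V\Gamma}$ already satisfies the universal property characterizing $\V(\varphi)$, and then invoke uniqueness. First I would record that any isomorphism $\varphi\colon\Gamma\to\Gamma$ of graphs over $C$ is \'etale: for each node $a$ the induced map $\varphi_a\colon I(a)\to I(\varphi(a))$ of input trees (see~\eqref{eq:varphi_a}) is an isomorphism of graphs over $C$, so $\V(\varphi)\colon\V\Gamma\to\V\Gamma$ is defined. By the construction in the proof of Theorem~\ref{thmA} (diagram~\eqref{eq:3.3}, equivalently~\eqref{eq4:alpha}), $\V(\varphi)$ is the unique linear map satisfying
\[
\varpi_a\circ\V(\varphi)=\Ctrl(\varphi_a)\inv\circ\varpi_{\varphi(a)}\qquad\text{for every }a\in\Gamma_0,
\]
where $\varpi_a\colon\V(\Gamma)\to\Ctrl(I(a))$ are the limit projections. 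Hence it suffices to verify that $\varpi_a=\Ctrl(\varphi_a)\inv\circ\varpi_{\varphi(a)}$ for every $a$, i.e.\ that $\Ctrl(\varphi_a)\circ\varpi_a=\varpi_{\varphi(a)}$.

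The key observation is that $\varphi_a$ is itself an arrow of the groupoid $G(\Gamma)$. Indeed, since $\varphi(a)\in\Gamma_0$, both $I(a)$ and $I(\varphi(a))$ are objects of $G(\Gamma)$, and $G(\Gamma)$ is by definition the \emph{full} subcategory of $\FinTree/C$ on the objects $\{I(c)\}_{c\in\Gamma_0}$ (Definition~\ref{def:groupoidGGamma}); thus the isomorphism of colored trees $\varphi_a\colon I(a)\to I(\varphi(a))$ belongs to $\Hom_{G(\Gamma)}(I(a),I(\varphi(a)))$. Since $\{\varpi_c\colon\V(\Gamma)\to\Ctrl(I(c))\}_{c\in\Gamma_0}$ is a cone over $\Ctrl_\Gamma=\Ctrl|_{G(\Gamma)}$ (it is the limit cone, cf.\ Remark~\ref{rmrk:2.8.3}), applying the cone condition to the arrow $\varphi_a$ yields exactly $\Ctrl(\varphi_a)\circ\varpi_a=\varpi_{\varphi(a)}$, equivalently $\varpi_a=\Ctrl(\varphi_a)\inv\circ\varpi_{\varphi(a)}$. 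This is precisely the identity needed above, so $\mathrm{id}_{\V\Gamma}$ has the defining property of $\V(\varphi)$, and by uniqueness $\V(\varphi)=\mathrm{id}_{\V\Gamma}$.

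I do not expect a genuine obstacle here: the argument is essentially bookkeeping of definitions, and the one point that deserves care — that $\varphi_a$ is an honest morphism in $G(\Gamma)$ rather than merely in the ambient groupoid $\FinTree/C$ — follows at once from the fullness clause in the definition of $G(\Gamma)$. The same proof works verbatim for $\V_\FV$ and $\V_\Man$, since in each case the functor is built as a limit of a $\Ctrl$-type functor over the same groupoids $G(\Gamma)$ and $\V(\varphi)$ is characterized by the analogous relation~\eqref{eq4:alpha} (cf.~Remark~\ref{rmrk:4alphaMan}).
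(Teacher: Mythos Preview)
Your proof is correct and follows essentially the same argument as the paper: characterize $\V(\varphi)$ by the relation $\varpi_a\circ\V(\varphi)=\Ctrl(\varphi_a)^{-1}\circ\varpi_{\varphi(a)}$, observe that $\varphi_a$ is already an arrow of $G(\Gamma)$ so the limit cone condition gives $\Ctrl(\varphi_a)\circ\varpi_a=\varpi_{\varphi(a)}$, and conclude by uniqueness. Your added justification that $\varphi_a\in G(\Gamma)$ via the fullness clause is a nice touch that the paper leaves implicit.
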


\begin{proof} Since $\varphi$ is an isomorphism of graphs, it is
  \'etale.  By definition \eqref{eq:3.3} of $\V\varphi$, it is the
  unique linear map so that
\begin{equation}\label{eq:3.11}
\varpi_a \circ \V \varphi = \Ctrl(\varphi_a)\inv \circ \varpi _{\varphi(a)}
\end{equation}
for all vertices $a$ of the graph $\Gamma$.  But $\varphi_a\colon I(a)\to
I(\varphi(a))$ is an arrow in the groupoid $G(\Gamma)$, hence, by
definition of $\V\Gamma$,
\[
\Ctrl(\varphi_a)\circ \varpi_a = \varpi_{\varphi(a)}.
\]
Consequently
\begin{equation}\label{eq:3.12}
\varpi_a \circ id_{\V\Gamma} =  
 \Ctrl(\varphi_a)\inv \circ \varpi _{\varphi(a)}
\end{equation}
Comparing (\ref{eq:3.12}) and (\ref{eq:3.11}) we see that $\V\varphi$ must be
$id_{\V\Gamma}$.
\end{proof}

\begin{theorem}\label{thm:5.1.5}
  Let $H$ be  the  group of automorphisms of  a colored graph $\Gamma \to C$.
  Then the image $S_\Gamma (\V \Gamma)$ of
  virtual groupoid invariant vector fields is a subspace of $H$-invariant
  vector fields $\chi (\PP \Gamma)^H$.
\end{theorem}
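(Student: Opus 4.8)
The plan is to deduce Theorem~\ref{thm:5.1.5} from Theorem~\ref{thm:sync} together with Proposition~\ref{prop:3.7}, the only additional work being a translation between ``groupoid invariance'' and ``group invariance.'' First I would describe the $H$-action on the phase space. Every $\varphi\in H$ is an isomorphism of graphs over $C$, hence in particular \'etale, and therefore a morphism in $\et{(\FinGraph/C)}$; applying the contravariant functor $\PP$ produces a linear automorphism $\PP\varphi$ of $\PP\Gamma_0$ (linear by Remark~\ref{rmrk:Pf-is-linear}), and $\varphi\mapsto(\PP\varphi)\inv$ is a genuine left action of $H$ on the Euclidean space $\PP\Gamma_0$ inducing the $H$-action on $\chi(\PP\Gamma_0)$ in the usual way, by pushforward of vector fields. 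A vector field $v$ is fixed by the pushforward along a diffeomorphism $g$ exactly when $v$ is $g$-related to itself, i.e.\ $dg\circ v=v\circ g$; since here every $g$ occurring in the action (namely $\PP\varphi$ or its inverse, $\varphi\in H$) is linear, this says that $v\in\chi(\PP\Gamma_0)^H$ if and only if
\[
\PP\varphi\circ v=v\circ\PP\varphi\qquad\textrm{for all }\varphi\in H .
\]

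Next I would apply Theorem~\ref{thm:sync} with $\Gamma'=\Gamma$ and with the \'etale map taken to be an automorphism $\varphi\in H$. It asserts that for every $w\in\V(\Gamma)$ the map $\PP\varphi$ is a morphism of dynamical systems $(\PP\Gamma_0,S_\Gamma(w))\to(\PP\Gamma_0,S_\Gamma(\V(\varphi)w))$, that is,
\[
D\PP\varphi(x)\bigl(S_\Gamma(w)(x)\bigr)=S_\Gamma(\V(\varphi)w)\bigl(\PP\varphi(x)\bigr)\qquad\textrm{for all }x\in\PP\Gamma_0 .
\]
By Proposition~\ref{prop:3.7}, $\V(\varphi)=id_{\V\Gamma}$ because $\varphi$ is a graph isomorphism, so $\V(\varphi)w=w$; and $D\PP\varphi(x)=\PP\varphi$ for all $x$ by linearity of $\PP\varphi$ (Remark~\ref{rmrk:2.11.5}). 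Substituting, the last display collapses to $\PP\varphi\circ S_\Gamma(w)=S_\Gamma(w)\circ\PP\varphi$, which is precisely the commutation relation characterizing $H$-invariance found in the first step. As $w\in\V(\Gamma)$ and $\varphi\in H$ were arbitrary, $S_\Gamma(\V\Gamma)\subseteq\chi(\PP\Gamma_0)^H$, and this is an inclusion of linear subspaces since $S_\Gamma$ is a linear map.

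I expect the argument to be short, so the ``main obstacle'' is not a hard estimate but merely the bookkeeping in the first paragraph: reconciling the contravariance of $\PP$ (so that $H$ acts on $\PP\Gamma_0$ through $(\PP\varphi)\inv$) with the convention for the induced action on vector fields, and checking that the two descriptions of $H$-invariance match up in the linear case. For completeness I would note that the same proof works verbatim in the category of manifolds and in the category of finite-dimensional vector spaces, using the corresponding synchrony statements (Theorems~\ref{thm:sync-Man} and~\ref{thm:sync-lin}) in place of Theorem~\ref{thm:sync}, which is the content of the remark preceding Proposition~\ref{prop:3.7}.
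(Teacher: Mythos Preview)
Your proposal is correct and follows essentially the same route as the paper: apply Theorem~\ref{thm:sync} with $\Gamma'=\Gamma$ and $\varphi\in H$, invoke Proposition~\ref{prop:3.7} to replace $\V(\varphi)w$ by $w$, and conclude that $S_\Gamma(w)$ is $\PP\varphi$-related to itself, which is exactly $H$-invariance. The paper's proof is terser and does not spell out the bookkeeping about the $H$-action and linearity that you include, but the argument is the same.
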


\begin{proof}
  By Theorem~\ref{thm:sync}, for any virtual vector field $f\in \V(\Gamma)$, the
  vector fields $S_\Gamma (f) $ and $S_\Gamma (\V(h)f)$ are $\PP
  h$-related for any isomorphism $H\ni h\colon \Gamma\to \Gamma$.  By
  Proposition~\ref{prop:3.7}, $\V(h)f = f$.  
  Hence $S_\Gamma (f)$ is $\PP h$ related to
  itself for any $h\in H$, i.e., is $H$-invariant.
\end{proof}

\begin{example}\label{ex:5.1.6}
  Let $C$ be a graph with one vertex $v$ and one edge: $C =
  \,\xymatrix{ *+[o][F]{} \ar@(dr,ur)}$\quad\quad and let $\cP\colon  C_0
  =\{\circ\}\to \Euc$ be given by $\cP (\circ) = \R$.  Let $\Gamma$ be a
  directed triangle:
\[
\xy
(-10,0)*++[o][F]{1}="1"; 
(0,15)*++[o][F]{2}="2"; 
(10,0)*++[o][F]{3}="3"; 
{\ar@{->} "1";"2"};
{\ar@{->} "2";"3"};{\ar@{->} "3";"1"};
\endxy .
\]
Then $\PP\Gamma = \R^3$.   The cyclic group  of order $3$ 
generated by
\[
\tau: 
\xy
(-10,-6)*++[o][F]{1}="1"; 
(0,9)*++[o][F]{2}="2"; 
(10,-6)*++[o][F]{3}="3"; 
{\ar@{->} "1";"2"};
{\ar@{->} "2";"3"};{\ar@{->} "3";"1"};
\endxy
\mapsto 
 \xy
(-10,-6)*++[o][F]{3}="1"; 
(0,9)*++[o][F]{1}="2"; 
(10,-6)*++[o][F]{2}="3"; 
{\ar@{->} "1";"2"};
{\ar@{->} "2";"3"};{\ar@{->} "3";"1"};
\endxy
\]
is the group $H$ of automorphisms of the graph $\Gamma$.
Then $\PP\tau\colon \R^3 \to \R^3$ is given by 
\[
\PP\tau (x_1, x_2, x_3) = (x_3, x_1, x_2).
\]
As before we identify the vector fields $\chi(\R^3)$ with smooth maps
$C^\infty(\R^3, \R^3)$.  Then a vector field $F(x) = (F_1(x), F_2(x),
F_3 (x))$ is $\tau$-invariant (and hence $H$-invariant) if and only if
\[
F (\PP \tau (x)) = \PP \tau (F(x)).
\]
Hence
\begin{eqnarray*}
F_1(x_2, x_3, x_1)&=&F_2 (x_1, x_2 , x_3 )\\
F_2 (x_2, x_3 , x_1 )&=& F_3 (x_1, x_2 , x_3 ).
\end{eqnarray*}
Therefore the space $\chi (\PP \Gamma_0)^H$ of $H$-invariant vector
fields is isomorphic to $C^\infty (\R^3, \R)$.  The isomorphism is
given by
\begin{eqnarray*}
C^\infty (\R^3, \R)& \to & C^\infty (\R^3, \R^3)^H = \chi (\PP\Gamma)^H\\
h(x)& \mapsto & (h(x), h(\PP\tau x) , h((\PP\tau)^2x).
\end{eqnarray*}
On the other hand, all the input trees of $\Gamma$ are of the form
\[\xy
(-20,0)*++[o][F]{}="1"; 
(0,0)*++[o][F]{}="2"; 
{\ar@{->}^{} "1";"2"};
\endxy
\]
and are all isomorphic to each other.  We conclude two things: for any
vertex $a$ of $\Gamma$, $\Ctrl (I(a)) \simeq C^\infty (\R, \R)$ and
the groupoid $G(\Gamma)$ is equivalent to the trivial category with
one object and one arrow.  Hence by Theorem~\ref{thm:inv-of-groups}
the space $\V(\Gamma)$ can be identified with the vector space
$C^\infty (\R, \R)$.  Tracing through the identifications we see that
$S_\Gamma\colon \V\Gamma \to \chi (\PP\Gamma)$ is given by the
injective map
\begin{eqnarray*}
  S_\Gamma \colon  \V\Gamma \simeq C^\infty (\R, \R)& \to & 
C^\infty (\R^3, \R)\simeq \chi (\PP\Gamma)^H \\
  f(u)& \mapsto & (f(x_3), f(x_1), f(x_2)).
\end{eqnarray*}
Clearly the space of groupoid-invariant vector fields $\V\Gamma$ is
much smaller than the space of group-invariant vector fields $\chi
(\PP\Gamma)^H$.

\end{example}

\section{Functoriality of limits}\label{sec:2slice}

\subsection{}
\mbox{}\\[-8pt]

\noindent
The goal of this section is to set up the category theoretic framework
alluded to in \ref{rmrk:2.10.11} and prove \ref{thm:ODEequiv} and
\ref{thm:inv-of-groups}.

\begin{notation}
The symbol $\Cat$ denotes the collection of all small categories.
\end{notation}
\begin{empt}\label{empt:5.1.1}
It is not too wrong to think of $\Cat$ as a category whose objects are
small categories and morphisms are functors.  But it is also not
completely accurate since there are also natural transformations
between functors.

Fix a category $\D$.  Assume that for any functor $f\colon \sfC \to \D$,
with $\sfC$ small, the limit $\lim (f\colon \sfC\to \D)$ exists in $\D$,
that is, assume that $\D$ is a {\em complete} category
(q.v.~Definition~\ref{def:complete-cat}).  The category $\D$ itself
need not be small.  For our applications we want $\D$ to be the
category $\Vect$ of not necessarily finite dimensional vector spaces,
but nothing that we do in this section requires $\D$ to be $\Vect$.

Consider the new category $\Cat/\D$ with
objects
\[
(\Cat/\D)_0 =\{ \X \stackrel{a}{\to}\D \mid \X\textrm{ is a small
  category, } a \textrm{ is a functor}\}.
\]
A morphism in $\Cat/\D$ from $\Y\stackrel{b}{\to}\D$ to
$X\stackrel{a}{\to} \D)$ is a pair $(f,\alpha)$ where $f\colon \Y\to \X$ is
a functor and $\alpha\colon  af\Rightarrow b$ a natural transformation.  We
picture morphisms as 2-commutative triangles:

\[
\xy
(-10, 10)*+{\Y} ="1"; 
(10, 10)*+{\X} ="2";
(0,-6)*+{\D }="3";
{\ar@{->}_{b} "1";"3"};
{\ar@{->}^{f} "1";"2"};
{\ar@{->}^{a} "2";"3"};
{\ar@{=>}_<<<{\scriptstyle \alpha} (4,6)*{};(-2,2)*{}} ; 
\endxy .
\] 
The composition $*$ of morphisms in $\D$ is defined by
\[
(f,\alpha)* (g,\beta)  = (fg, \beta\circ_V (\alpha\circ_H g))
\]
for any pair $(g,\beta)\colon (\Z\stackrel{c}{\to} \D) \to
(\Y\stackrel{b}{\to}\D)$, $(f,\alpha)\colon (\Y\stackrel{b}{\to}\D)\to
(\X\stackrel{a}{\to}\D)$ of composable morphisms of $\Cat/\D$.  Here
$\circ_V$ and $\circ_H$ denote the vertical and horizontal
compositions (see \ref{def:comp-transf}).  We picture the composition
$*$ as pasting of two 2-commuting triangles:
\[
\left(   
\xy
(-10, 10)*+{\Y} ="1"; 
(10, 10)*+{\X} ="2";
(0,-6)*+{\D }="3";
{\ar@{->}_{b} "1";"3"};
{\ar@{->}^{f} "1";"2"};
{\ar@{->}^{a} "2";"3"};
{\ar@{=>}_<<<{\scriptstyle \alpha} (4,6)*{};(-2,2)*{}} ; 
\endxy ,  
\xy
(-10, 10)*+{\Z} ="1"; 
(10, 10)*+{\Y} ="2";
(0,-6)*+{\D }="3";
{\ar@{->}_{c} "1";"3"};
{\ar@{->}^{g} "1";"2"};
{\ar@{->}^{b} "2";"3"};
{\ar@{=>}_<<<{\scriptstyle \beta} (4,6)*{};(-2,2)*{}} ; 
\endxy\right)  
\stackrel{*}{\mapsto}  
\xy
(-20, 10)*+{\Z} ="-1"; 
( 0, 10)*+{\Y} ="1"; 
(20, 10)*+{\X} ="2";
(0,-6)*+{\D }="3";
{\ar@{->}_{c} "-1";"3"};
{\ar@{->}_{c} "-1";"3"};
{\ar@{->}^{g} "-1";"1"}; 
{\ar@{->}_{b} "1";"3"};
{\ar@{->}^{f} "1";"2"};
{\ar@{->}^{b} "2";"3"};
{\ar@{=>}_<<<{\scriptstyle \alpha} (12,7)*{};(6,5)*{}} ; 
{\ar@{=>}_<<<{\scriptstyle \beta} (-3,7)*{};(-7,4)*{}} ; 
\endxy 
= \xy
(-10, 10)*+{\Z} ="1"; 
(10, 10)*+{\X} ="2";
(0,-6)*+{\D }="3";
{\ar@{->}_{c} "1";"3"};
{\ar@{->}^{fg} "1";"2"};
{\ar@{->}^{b} "2";"3"};
{\ar@{=>}_<<<{\scriptstyle } (4,6)*{};(-2,2)*{}} ; 
\endxy .
\]
One checks that $*$ is
associative and that $\Cat/\D$ is a category.
\end{empt}.
\begin{proposition}\label{prop:5.1.11}
We use the notation above.  The assignment
\[
L\colon  (\Cat/\D)_0 \to \D_0, \quad L(\X\stackrel{a}{\to}\D) : = 
\lim (\X\stackrel{a}{\to}\D)
\]
extends to a contravariant functor
\[
L\colon  \op{(\Cat/\D)}\to \D.
\]
\end{proposition}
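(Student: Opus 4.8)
The plan is a chase through the universal property of limits; the only substance beyond bookkeeping is keeping straight the two-dimensional structure of $\Cat/\D$. Note first that the object assignment makes sense precisely because $\D$ is assumed complete (q.v.\ \ref{empt:5.1.1}): for each object $(\X\stackrel{a}{\to}\D)$ of $\Cat/\D$ I would fix, once and for all, a choice of limit cone $\{\lambda^a_x\colon\lim a\to a(x)\}_{x\in\X_0}$, where I abbreviate $\lim a:=L(\X\stackrel{a}{\to}\D)=\lim(\X\stackrel{a}{\to}\D)$; this determines $L$ on objects.

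To define $L$ on a morphism $(f,\alpha)\colon(\Y\stackrel{b}{\to}\D)\to(\X\stackrel{a}{\to}\D)$ --- a functor $f\colon\Y\to\X$ together with a natural transformation $\alpha\colon af\Rightarrow b$ --- I would, for each object $y\in\Y_0$, form the arrow $\rho_y:=\alpha_y\circ\lambda^a_{f(y)}\colon\lim a\to b(y)$. The family $\{\rho_y\}_{y\in\Y_0}$ is a cone over $b$ with apex $\lim a$: given a morphism $g\colon y\to y'$ of $\Y$, naturality of $\alpha$ gives $b(g)\circ\alpha_y=\alpha_{y'}\circ a(f(g))$, and the cone identity for $\{\lambda^a_x\}$ applied to the arrow $f(g)$ of $\X$ gives $a(f(g))\circ\lambda^a_{f(y)}=\lambda^a_{f(y')}$; composing the two yields $b(g)\circ\rho_y=\rho_{y'}$. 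By the universal property of $\lim b$ there is then a unique arrow
\[
L(f,\alpha)\colon\lim a\to\lim b, \qquad \lambda^b_y\circ L(f,\alpha)=\alpha_y\circ\lambda^a_{f(y)}\ \text{ for all }y\in\Y_0.
\]
Since $L(f,\alpha)$ runs opposite to $f$, $L$ will be contravariant; this construction is the coordinate-free version of the passage from an \'etale map $\varphi$ to $\V(\varphi)$ in the proof of Theorem~\ref{thmA}.

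Finally I would verify the functor axioms, each time by invoking uniqueness in the universal property of the target limit. The identity morphism of $(\X\stackrel{a}{\to}\D)$ is $(\mathrm{id}_\X,\mathrm{id}_a)$, and $\mathrm{id}_{\lim a}$ satisfies $\lambda^a_x\circ\mathrm{id}=\lambda^a_x=(\mathrm{id}_a)_x\circ\lambda^a_{\mathrm{id}_\X(x)}$, so $L(\mathrm{id}_\X,\mathrm{id}_a)=\mathrm{id}_{\lim a}$. For composition, let $(g,\beta)\colon(\Z\stackrel{c}{\to}\D)\to(\Y\stackrel{b}{\to}\D)$ and $(f,\alpha)\colon(\Y\stackrel{b}{\to}\D)\to(\X\stackrel{a}{\to}\D)$; by definition $(f,\alpha)*(g,\beta)=(fg,\gamma)$ with $\gamma=\beta\circ_V(\alpha\circ_H g)$, and unwinding the horizontal then the vertical composite gives the component $\gamma_z=\beta_z\circ\alpha_{g(z)}$ at each $z\in\Z_0$. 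Hence for every $z$
\begin{align*}
\lambda^c_z\circ L(g,\beta)\circ L(f,\alpha)
&=\beta_z\circ\lambda^b_{g(z)}\circ L(f,\alpha)\\
&=\beta_z\circ\alpha_{g(z)}\circ\lambda^a_{f(g(z))}\\
&=\gamma_z\circ\lambda^a_{(fg)(z)},
\end{align*}
which is exactly the equation characterizing $L(fg,\gamma)$; therefore $L(g,\beta)\circ L(f,\alpha)=L\big((f,\alpha)*(g,\beta)\big)$, and $L\colon\op{(\Cat/\D)}\to\D$ is a contravariant functor.

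The main obstacle --- such as it is --- is not depth but care: one must correctly unwind the composite $2$-cell $\beta\circ_V(\alpha\circ_H g)$ into its pointwise components and, throughout, keep the directions of all arrows consistent, since $L$ reverses arrows while the functors $f$, $g$ already point ``backwards'' inside $\Cat/\D$. Every other step is forced by the universal property and requires no choices beyond the initial one of limit cones.
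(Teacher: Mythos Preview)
Your proof is correct and follows essentially the same approach as the paper's: both define $L(f,\alpha)$ by showing that the composites $\alpha_y\circ\lambda^a_{f(y)}$ form a cone over $b$ and then invoke the universal property of $\lim b$, and both verify functoriality via the uniqueness clause of that universal property. Your write-up is in fact slightly more explicit than the paper's, since you spell out the identity axiom and compute the component $\gamma_z=\beta_z\circ\alpha_{g(z)}$ directly, whereas the paper records the composition check only as a large commuting diagram.
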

\begin{proof}
Consider a morphism $\xy
(-10, 10)*+{\Y} ="1"; 
(10, 10)*+{\X} ="2";
(0,-6)*+{\D }="3";
{\ar@{->}_{b} "1";"3"};
{\ar@{->}^{f} "1";"2"};
{\ar@{->}^{a} "2";"3"};
{\ar@{=>}_<<<{\scriptstyle \alpha} (4,6)*{};(-2,2)*{}} ; 
\endxy$ in $\Cat/D$.  Let $p_x\colon  \lim (\X\stackrel{a}{\to}\D) \to
a(x)$, $x\in \X_0$ and $q_y\colon  \lim (\Y\stackrel{b}{\to}\D) \to b(y)$,
$y\in \Y_0$ denote the canonical projections.  For any arrow
$y\stackrel{\gamma}{\to}y'$ in $\Y$ we have a commuting diagram
\[
\xy (-5,20)*+{\lim(\X\stackrel{a}{\to}\D) }="1";
(10,7)*+{af(y)}="2"; 
(25,7)*+{af(y')} ="3";
(-5,-20)*+{\lim(\Y\stackrel{b}{\to}\D) }="4";
(10,-7)*+{b(y)}="5"; 
(25,-7)*+{b(y')} ="6";
{\ar@{->}_{p_{f(y)}} "1";"2"}; 
{\ar@{->}^{p_{f(y')}} "1";"3"};
{\ar@{->}_{af(\gamma)} "2";"3"};
{\ar@{->}^{b(\gamma)} "5";"6"};
{\ar@{->}^{q_y} "4";"5"};
{\ar@{->}_{q_{y'}} "4";"6"};
{\ar@{->}_{\alpha_y} "2";"5"};
{\ar@{->}^{\alpha_{y'}} "3";"6"}; 
\endxy .
\]
By the universal property of $\{q_y\colon  \lim (\Y\stackrel{b}{\to}\D) \to
b(y)\}_{y\in \Y_0}$ we have a unique map $L(f,\alpha)\colon 
\lim(\X\stackrel{a}{\to}\D) \to \lim(\Y\stackrel{b}{\to}\D)$ making
the diagram
\[
\xy (-5,20)*+{\lim(\X\stackrel{a}{\to}\D) }="1";
(10,7)*+{af(y)}="2"; 
(25,7)*+{af(y')} ="3";
(-5,-20)*+{\lim(\Y\stackrel{b}{\to}\D) }="4";
(10,-7)*+{b(y)}="5"; 
(25,-7)*+{b(y')} ="6";
{\ar@{->}_{p_{f(y)}} "1";"2"}; 
{\ar@{->}^{p_{f(y')}} "1";"3"};
{\ar@{->}_{af(\gamma)} "2";"3"};
{\ar@{->}^{b(\gamma)} "5";"6"};
{\ar@{->}^{q_y} "4";"5"};
{\ar@{->}_{q_{y'}} "4";"6"};
{\ar@{->}_{\alpha_y} "2";"5"};
{\ar@{->}^{\alpha_{y'}} "3";"6"};
{\ar@{..>}_{\exists ! L(f,\alpha)} "1";"4"};
\endxy 
\] 
commute.
The fact that $L$ preserves the composition of morphisms 
\[
\xy
(-20, 10)*+{\Z} ="-1"; 
( 0, 10)*+{\Y} ="1"; 
(20, 10)*+{\X} ="2";
(0,-6)*+{\D }="3";
{\ar@{->}_{c} "-1";"3"};
{\ar@{->}^{g} "-1";"1"};
{\ar@{->}_{b} "1";"3"};
{\ar@{->}^{f} "1";"2"};
{\ar@{->}^{a} "2";"3"};
{\ar@{=>}_<<<{\scriptstyle \alpha} (12,7)*{};(6,5)*{}} ; 
{\ar@{=>}_<<<{\scriptstyle \beta} (-3,7)*{};(-7,4)*{}} ; 
\endxy   
\]
follows from the commutativity of the diagram below 
\[
\xy (-20,40)*+{\lim( X\stackrel{a}{\to} \D)}="1"; 
(-40,8)*+{\lim (Y\stackrel{b}{\to}\D)}="2"; 
(-20,-40)*+{\lim (Z\stackrel{c}{\to}\D)} ="3";  
(0,20)*+{afg(z)}="4";
(30,20)*+{afg(z')}="5"; 
(0,0)*+{bg(z)} ="6";
(30,0)*+{bg(z')} ="7";
(0,-20)*+{c(z)} ="8";
(30,-20)*+{c(z')} ="9";
 {\ar@{->}_(.7){p_{fg(z)}} "1";"4"}; 
{\ar@{->}^{p_{fg(z')}} "1";"5"};
{\ar@{->}_(.7){q_{g(z)}} "2";"6"};
{\ar@/^{1pc}/^{q_{g(z')}} "2";"7"};
{\ar@{->}^{r_z} "3";"8"};
{\ar@{->}_{r_{z'}} "3";"9"};
{\ar@{->}_{L(g,\beta)} "2";"3"};
{\ar@{->}_(.56){L((g,\beta)*(f,\alpha))} "1";"3"};
{\ar@{->}_{L(f, \alpha)} "1";"2"};
{\ar@{->}^{afg(\gamma)} "4";"5"};
{\ar@{->}^{} "4";"6"};
{\ar@{->}^{} "5";"7"};
{\ar@{->}_{bg(\gamma)} "6";"7"};
{\ar@{->}^{} "6";"8"};
{\ar@{->}^{} "7";"9"}; 
{\ar@{->}^{c(\gamma)} "8";"9"};
\endxy  
\]
for  any arrow $z\stackrel{\gamma}{\to}z'$ in $\Z$.
\end{proof}

\begin{remark} \label{L(nat-iso)} It follows from the proof above that
  if $a, a'\colon \X \to D$ is a pair of functors and $\alpha\colon  a\Rightarrow
  a'$ is a natural {\em isomorphism}, then $L(id,
  \alpha)\colon L(X\stackrel{a}{\to}D) \to L(X\stackrel{a'}{\to}D)$ is an
  isomorphism in $\D$.   Since any 2-commutative triangle $\xy
(-10, 10)*+{\Y} ="1"; 
(10, 10)*+{\X} ="2";
(0,-6)*+{\D }="3";
{\ar@{->}_{b} "1";"3"};
{\ar@{->}^{f} "1";"2"};
{\ar@{->}^{a} "2";"3"};
{\ar@{=>}_<<<{\scriptstyle \alpha} (4,6)*{};(-2,2)*{}} ; 
\endxy$ 
can be factored as
\[
 \xy
(-20, 10)*+{\Y} ="-1"; 
( 0, 10)*+{\Y} ="1"; 
(20, 10)*+{\X} ="2";
(0,-6)*+{\D }="3";
{\ar@{->}_{b} "-1";"3"};
{\ar@{->}^{id_Y} "-1";"1"};
{\ar@{->}^{af} "1";"3"};
{\ar@{->}^{f} "1";"2"};
{\ar@{->}^{a} "2";"3"};
{\ar@{=>}_<<<{\scriptstyle id} (12,7)*{};(6,5)*{}} ;  
{\ar@{=>}_<<<{\scriptstyle \alpha} (-3,7)*{};(-7,4)*{}} ;  
\endxy  ,      
\]
if $\alpha$ is a natural isomorphism then $L(f,\alpha)$ is an
isomorphism  if and only if $L(f, id)$ is an isomorphism.
\end{remark}
\noindent
We are now ready to state and prove the main result of the section.
\begin{theorem}\label{thm:iso}
  Let $\X$, $\Y$ be small categories and $\D$ a category that has all
  small limits.  Suppose $\Y\stackrel{F}{\to}\X$ is part of an
  equivalence of categories and $\alpha\colon  aF\Rightarrow b$ a natural
  isomorphism.  Then
\[
L(F,\alpha)\colon  \lim(\X\stackrel{a}{\to}\D)\to \lim (\Y\stackrel{b}{\to}\D)
\] 
is an isomorphism in $\D$.
\end{theorem}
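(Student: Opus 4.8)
The plan is to reduce immediately to the special case $\alpha = id_{aF}$, $b = aF$. This is legitimate: by Remark~\ref{L(nat-iso)}, since $\alpha$ is a natural isomorphism, $L(F,\alpha)$ is an isomorphism if and only if the comparison map $u := L(F, id_{aF})\colon \lim(\X\stackrel{a}{\to}\D)\to\lim(\Y\stackrel{aF}{\to}\D)$ is. So everything comes down to showing that precomposition with an equivalence induces an isomorphism on limits.

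To that end I would choose a quasi-inverse $G\colon\X\to\Y$ of $F$ with natural isomorphisms $\eta\colon id_\Y\Rightarrow GF$ and $\epsilon\colon FG\Rightarrow id_\X$, and — since this costs nothing — I would arrange $(F,G,\eta,\epsilon)$ to be an \emph{adjoint} equivalence (a standard fact, see e.g.~\cite{MacLane}), so that the triangle identities are available; in particular $\epsilon_{F(y)}\circ F(\eta_y) = id_{F(y)}$ for all $y\in\Y_0$. The candidate inverse of $u$ is $v := L(G, a\epsilon)$, where $a\epsilon\colon aFG\Rightarrow a$ is the whiskering of $\epsilon$ by $a$ (components $(a\epsilon)_x = a(\epsilon_x)$); note $(G, a\epsilon)$ is a genuine morphism $(\X\stackrel{a}{\to}\D)\to(\Y\stackrel{aF}{\to}\D)$ in $\Cat/\D$, so $v$ makes sense. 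Writing $\phi = (F, id_{aF})$ and $\psi = (G, a\epsilon)$, the definition of composition in $\Cat/\D$ (and the triviality of whiskering by identity $2$-cells) gives the two round-trips $\phi * \psi = (FG,\, a\epsilon)$ and $\psi * \phi = \bigl(GF,\, (a\epsilon)\circ_H F\bigr)$, whence by the contravariant functoriality of $L$ (Proposition~\ref{prop:5.1.11}) one gets $v\circ u = L(FG, a\epsilon)$ and $u\circ v = L\bigl(GF, (a\epsilon)\circ_H F\bigr)$.

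The remaining task is to identify each of these with the appropriate identity, which I would do using the explicit description of $L$ on morphisms from the proof of Proposition~\ref{prop:5.1.11}. For the first: if $\{p_x\}$ is the universal cone on $\lim(\X\stackrel{a}{\to}\D)$, then $p_x\circ L(FG, a\epsilon) = a(\epsilon_x)\circ p_{FG(x)}$, and since $\epsilon_x\colon FG(x)\to x$ is an arrow of $\X$ this equals $p_x$ by the cone condition; uniqueness in the universal property forces $L(FG, a\epsilon) = id$. For the second: with $\{r_y\}$ the universal cone on $\lim(\Y\stackrel{aF}{\to}\D)$, one has $r_y\circ L\bigl(GF,(a\epsilon)\circ_H F\bigr) = a(\epsilon_{F(y)})\circ r_{GF(y)}$; applying the cone condition to $\eta_y\colon y\to GF(y)$ rewrites $r_{GF(y)} = aF(\eta_y)\circ r_y$, and then the triangle identity collapses $a\bigl(\epsilon_{F(y)}\circ F(\eta_y)\bigr)\circ r_y$ to $r_y$, so this map is $id$ as well. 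Thus $v$ is a two-sided inverse of $u$, and, by the first paragraph, $L(F,\alpha)$ is an isomorphism.

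I expect the only real friction to be bookkeeping: keeping the sources and targets in $\Cat/\D$ straight, handling the horizontal and vertical whiskerings correctly, and tracking the contravariance of $L$. The one genuinely substantive point is the appeal to an adjoint equivalence in the second round-trip — without the triangle identity the arrow $\epsilon_{F(y)}$ need not be of the form $F(\text{arrow of }\Y)$, which is precisely what the cone condition for $\{r_y\}$ requires; replacing the given equivalence data by adjoint equivalence data is what removes this obstruction. (Note also that the first round-trip alone already shows $u$ is split monic; the second is what upgrades this to an isomorphism.)
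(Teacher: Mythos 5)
Your argument is correct, but it takes a genuinely different route from the paper's. The authors deliberately avoid adjointness: after the same reduction to $\alpha=id$ (their Remark~\ref{L(nat-iso)}), they work entirely inside $\Cone(aF)$, showing by hand that the restricted cone $\{p_{F(y)}\}$ on $\lim(\X\stackrel{a}{\to}\D)$ is terminal --- given an arbitrary cone $(d,\{\mu_y\})$ on $aF$ they transport it to a cone on $a$ via a quasi-inverse $H$ and the natural isomorphisms $\tau,\nu$, factor through $\lim a$, and then absorb the resulting correction automorphism $\kappa$ of $aF$; they never produce an inverse arrow in $\D$ explicitly. You instead exploit the contravariant functoriality of $L$ (Proposition~\ref{prop:5.1.11}) to exhibit a two-sided inverse $v=L(G,a\epsilon)$, compute both round-trips as $L(FG,a\epsilon)$ and $L(GF,(a\epsilon)\circ_H F)$ (your bookkeeping with the paper's composition law in $\Cat/\D$ checks out), and collapse each to the identity via the cone conditions, the second collapse requiring the triangle identity $\epsilon_{F(y)}\circ F(\eta_y)=id_{F(y)}$. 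This is essentially the ``shorter proof'' the authors allude to but do not write: it buys brevity and an explicit inverse, at the cost of importing the standard but external fact that any equivalence can be promoted to an adjoint equivalence (Mac Lane IV.4), whereas the paper's proof is longer but self-contained, using nothing beyond the universal property of limits. Your closing observation is also apt: even without the adjoint promotion, the first round-trip gives $v\circ u=id$ on the nose, and one could finish by noting $u\circ v$ is an isomorphism (the $2$-cell $(a\epsilon)\circ_H F$ being a natural isomorphism), so the triangle identity is a convenience that upgrades ``isomorphism'' to ``identity'' rather than an essential ingredient.
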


\noindent
Note that by Remark~\ref{L(nat-iso)} it is no loss of generality to
assume that $\alpha$ is the identity natural isomorphism, that is, that $af
= b$.

There are several ways to write down the proof of
Theorem~\ref{thm:iso}. We present the proof that uses nothing more
than the universal property of limits.  The reader may wish to write
down a shorter proof that uses adjoint functors.
\begin{proof}[Proof of Theorem~\ref{thm:iso}]
  Without loss of generality we assume that $\alpha$ is the identity
  natural isomorphism so that $b= af$.
  Let $\{p_x\colon  L\to a(x)\}_{x\in X}$ denote a limit of $a\colon  \X \to \D$.
  Then by definition for any arrow $y\stackrel{f}{\to}y'$ of $\Y$ the
  diagram
\[
\xy (0, 7)*+{ L} ="1";
  (-14,-6)*+{aF(y) }="2"; 
(14,-6)*+{aF(y')} ="3"; 
{\ar@{->}_{ p_{F(y)}}    "1";"2"}; 
{\ar@{->}^{p _{F(y')}} "1";"3"}; 
{\ar@{->}^{aF(f)}    "2";"3"}; 
  \endxy
\]
commutes in $\D$.  We argue that $\{p_{F(y)}\colon L\to
aF(y)\}_{y\in \Y_0}$ is the terminal object in $\Cone(aF)$, that is, a
limit of $aF\colon \Y\to \D$. For then the induced map $L(F, id): L \to L$ is the identity map and the result follows.

Suppose $\{\mu_a\colon  d\to aF(y)\}_{y\in \Y_0}$ is a cone on $aF$.  We
need to produce a morphism of cones $\mu\colon  (d, \{\mu_y\}) \to (L,
\{p_{F(y)}\})$.  Since $F\colon \Y\to \X$ is part of an equivalence of
categories there is a functor 
\[
H\colon \X\to \Y 
\]
and natural isomorphism
\[
\tau\colon  FH\Rightarrow id_\X, \quad\nu\colon  HF\Rightarrow id_\Y.
\]
For any $x\in \X_0$ the arrow $\tau_x\colon  FH(x) \to x$ is an isomorphism
in $\X$.  Hence $a(\tau_x)\colon aFH (x)\to a(x)$ is an isomorphism in $\D$.
Consider
\[
\zeta_x\colon  d\to a(x), \quad \zeta_x := a(\tau_x) \circ \mu_{H(x)}.
\]
For any arrow $x\stackrel{g}{\to}x'$ in $\X$ the square
\[
\xymatrix{
FH(x) \ar[r]^{FH(g) } \ar[d]_{\tau_x}
& FH(x')\ar[d]^{\tau_{x'}}  \\
 x  \ar[r]^{g }
&  x' }
\] 
commutes in $\X$.  Hence
\begin{equation}\label{eq:6.1}
\xymatrix{
aFH(x) \ar[r]^{aFH(g) } \ar[d]_{a(\tau_x)}
& aFH(x')\ar[d]^{a(\tau_{x'})}  \\
 a(x)  \ar[r]^{a(g) }&  a(x') }
\end{equation}
commutes in $\D$. By assumption 
\begin{equation}\label{eq:6.2}
\xy (0, 7)*+{ d} ="1";
  (-14,-6)*+{aFH(x) }="2"; 
(14,-6)*+{aFH(x')} ="3"; 
{\ar@{->}_{ \mu_{H(x)}}    "1";"2"}; 
{\ar@{->}^{\mu _{H(x')}} "1";"3"}; 
{\ar@{->}^{aFH(g)}    "2";"3"};
  \endxy
\end{equation}
commutes in $\D$.  Equations \eqref{eq:6.1} and \eqref{eq:6.2}
together with the definition of $\zeta_x$ imply that
\[
\xy (0, 7)*+{ d} ="1";
  (-14,-6)*+{a(x) }="2"; 
(14,-6)*+{a(x')} ="3"; 
{\ar@{->}_{ \zeta_{x}}    "1";"2"}; 
{\ar@{->}^{\zeta _{x'}} "1";"3"}; 
{\ar@{->}^{a(g)}    "2";"3"};
  \endxy
\]
commutes in $\D$, that is, $(d, \{\zeta_x\})$ is a cone over $a$.
Since $(L, \{p_x \})$ is terminal in $\Cone(a)$, there is a unique map
of cones $\zeta\colon  (d, \{\zeta_x\}) \to (L, \{p_x\})$.  Hence
\begin{equation}\label{eq6.3}
\xy
(-10, 7)*+{d} ="1";
(10, 7)*+{L} ="2";
(0, -5)*+{aF(y)} ="3";
{\ar@{->}^{ \zeta}    "1";"2"}; 
{\ar@{->}_{ \zeta_{F(y)} }   "1";"3"}; 
{\ar@{->}^{ p_{F(y)}}    "2";"3"}; 
\endxy
\end{equation}
commutes for any object  $y$ of $\Y$.  However, 
\[
\zeta_{F(y)} = a (\tau_{F(y)}) \circ \mu _{HF(y)},
\]
which is not $\mu_y$.  Indeed, since $HF(y)\stackrel{\nu_y}{\to } y$
is an arrow in $Y$ (in fact an isomorphism), the diagram
\[
\xy (0, 7)*+{ d} ="1";
  (-14,-6)*+{aFHF(y) }="2"; 
(14,-6)*+{aF(y))} ="3"; 
{\ar@{->}_{ \mu_{HF(y)}}    "1";"2"}; 
{\ar@{->}^{\mu _{y}} "1";"3"}; 
{\ar@{->}^{aF (\nu_y)}    "2";"3"};
  \endxy
\]
commutes.  Hence
\begin{equation}\label{eq6.4}
\zeta _{F(y)} = a(\tau_{F(y)} ) \circ aF(\nu_y\inv) \circ \mu_y
\end{equation}
for any $y$ in $\Y$.  
We therefore can rewrite \eqref{eq6.3} as the commuting diagram
\begin{equation}\label{eq6.5}
\xy
(-10, 7)*+{d} ="1";
(10, 7)*+{L} ="2";
(-10, -5)*+{aF(y)} ="3";
(10, -5)*+{aF(y)} ="4";
{\ar@{->}^{ \zeta}    "1";"2"}; 
{\ar@{->}_{ \mu_{y} }   "1";"3"}; 
{\ar@{->}^{ p_{F(y)}}    "2";"4"};
 {\ar@{<-}_{ \kappa_y}    "3";"4"};
\endxy,
\end{equation}
where 
\[
\kappa_y\colon = \left(a(\tau_{F(y)} ) \circ aF(\nu_y\inv) \right)\inv,
\]
for any object $y$ of $\Y$.  Note that since $\tau$ and $\nu$ are
natural isomorphisms, the collection $\{\kappa_y\}$ is a natural
isomorphism from $aF$ to $aF$.  Consequently
\[
  (L, \{\kappa_y\circ p_{F(y)}\}_{y\in \Y_0})
\]
is a cone on $aF$. Moreover the preceding argument shows that for any
cone $(d, \{\mu_y\})$ on $aF$ there is a morphism of cones $\zeta$
from $(d, \{\mu_y\})$ to $(L, \{\kappa_y\circ p_{F(y)}\}$.  We leave
it to the reader to check that $\zeta$ is, in fact,  unique. 

We conclude that $(L, \{\kappa_y\circ p_{F(y)}\colon L \to
aF(y)\}_{y\in \Y_0})$ is a limit of $aF\colon \Y\to\D$, that is, a
terminal object in $\Cone(aF)$.  The natural isomorphism $\kappa
\colon aF \Rightarrow aF$ induces an isomorphism of categories
$\Cone(aF)\to \Cone (aF)$.  It follows that since $(L, \{\kappa_y\circ
p_{F(y)}\colon L \to aF(y)\}_{y\in \Y_0})$ is a terminal object in
$\Cone (aF)$, so is $(L, \{p_{F(y)}\colon L \to aF(y)\}_{y\in \Y_0})$.
\end{proof}

\begin{proof}[Proof of Theorem~\ref{thm:ODEequiv}]
  Since $G(\varphi)\colon G(\Gamma)\to G(\Gamma')$ is always fully
  faithful by construction, the assumption that $G(\varphi)$ is
  essentially surjective implies that $G(\varphi)$ is an equivalence
  of categories.  By \eqref{eq:3.2} the diagram
\[
\xy
(-10, 10)*+{G(\Gamma)} ="1"; 
(10, 10)*+{G(\Gamma')} ="2";
(0,-6)*+{\Vect }="3";
{\ar@{->}_{ \Ctrl |_{G(\Gamma)}} "1";"3"};
{\ar@{->}^{G(\varphi)} "1";"2"};
{\ar@{->}^{\Ctrl|_{G(\Gamma')}} "2";"3"};
{\ar@{=>}_<<<{\scriptstyle \phi} (4,6)*{};(-2,2)*{}} ; 
\endxy .    
\]
2-commutes (q.v. Remark~\ref{rmrk:2.10.11}).
By Theorem~\ref{thm:iso} $\V (\varphi)$ is an isomorphism.
\end{proof}

\begin{proof}[Proof of Theorem~\ref{thm:inv-of-groups}]
  Let $\varphi: \bigsqcup_{i=1}^k\Aut(I(a_i)) \hookrightarrow
  G(\Gamma)$ denote the canonical inclusion of the skeleton of the
  groupoid $G(\Gamma)$ into the groupoid. By definition of the
  skeleton, $\varphi$ is an equivalence of categories.  Clearly
\[
\xy
(-17, 10)*+{\bigsqcup_{i=1}^k\Aut(I(a_i))} ="1"; 
(17, 10)*+{G(\Gamma)} ="2";
(0,-6)*+{\Vect }="3";
{\ar@{->}_{ \Ctrl |_{\bigsqcup_{i=1}^k\Aut(I(a_i))}} "1";"3"};
{\ar@{->}^{\quad G(\varphi)} "1";"2"};
{\ar@{->}^{\Ctrl|_{G(\Gamma)}} "2";"3"};
\endxy 
\]
strictly commutes.  Hence by Theorem~\ref{thm:iso} 
\[
\V (\varphi):\V(\Gamma) \to \lim (\Ctrl\colon
\bigsqcup_{i=1}^k\Aut(I(a_i)) \to \Vect)
\]
is an isomorphism.  But
\[
 \lim (\Ctrl\colon
\bigsqcup_{i=1}^k\Aut(I(a_i)) \to \Vect) 
=  \prod_{i=1}^k \Ctrl(I(a_i))^{\Aut(I(a_i))}
\]
by \ref{rmrk:A.2.13}, and the result follows.
\end{proof}

\appendix

\section{Elements of  category theory}\label{sec:category}

\subsection{Basic notions}
\mbox{}\\[-8pt]

\noindent
We start by recalling the basic definitions of category theory, mostly
to fix our notation.  This appendix may be useful to the reader with
some background in category theory; the reader with little to no
experience in category theory may wish to 
consult a textbook such as \cite{Awodey}.

\begin{definition}[Category]\label{def:category}
  A {\bf category} $\A$ consists

\begin{enumerate} 
\item A collection\footnote{A collection may be too big to be a set.
    While for many constructions in category theory the \em size \em
    of $\A_0$ and $\A_1$ is important, it will play only a limited role
    in this paper. Thus we will mostly ignore the usual set-theoretic
    issues.}  $\A_0$ of {\em objects}; 

\item For any two objects $a,b\in \A_0$, a set $\Hom_\A (a,b)$ of of
  {\em morphisms} (or {\em arrows});
\item For any three objects $a,b,c\in \A_0$, and any two arrows
  $f\in\Hom_\A(a, b)$ and $g\in\Hom_\A(b,c)$, a {\em composite }
  $g\circ f\in\Hom_\A(a,c)$, i.e., for all triples of objects
  $a,b,c\in\A_0$ there is a {\em composition map}
\[
\circ\colon \Hom_\A(b,c)\times\Hom_\A(a,b)\rightarrow\Hom_\A(a,c),
\]
\[
\Hom_\A(b,c)\times\Hom_\A(a,b) \ni (g,f)\mapsto g\circ f\in \Hom_\A(a,c).
\] 
This composition operation is {\em
associative} and has {\em units}, that is,
\begin{itemize} 
\item[i.] for any triple of morphisms $f\in\Hom_\A(a,b)$,
  $g\in\Hom_\A(b,c)$ and $h\in\Hom_\A(c,d)$ we have
\[ 
h\circ (g\circ f)=(h\circ g)\circ f\,; 
\] 
\item[ii.] for any object $a\in \A_0$, there exists a morphism
  $1_a\in\Hom_\A(a,a)$, called the {\em identity}, which is such that
  for any $f\in\Hom_\A(a,b)$ we have
\[ 
f=f\circ
1_a=1_b\circ f\,.  
\] 
\end{itemize}
We denote the collection of all morphisms of a category $\A$ by $\A_1$:
\[
\A_1 = \bigsqcup _{a,b\in \A_0} \Hom_\A (a,b).
\]
\end{enumerate} 
\end{definition}
\begin{remark}
  The symbol ``$\circ$'' is customarily suppressed in writing out
  compositions of two morphisms.  Thus
\[
gf\equiv g\circ f.
\]
\end{remark}
\begin{remark}
  Given a category $\A$ its underlying graph has the collections $\A_0$
  as nodes, $\A_1$ as edges, and the source and target maps $s,t\colon \A_1
  \to \A_0$ are defines so that for any $f\in \Hom_\A(a,b)$ we have
\[
s(f) = a, \quad t(f) = b.
\]
In other words, if we forget the composition in a category we are left
with a (directed) graph.
\end{remark}
\begin{example}[Category $\Set$ of sets] 
The collection $\Set$ of all sets forms a category.
  The objects of $\Set$ are sets, the arrows of $\Set$ are ordinary
  maps and the composition of arrows is the composition of maps.
\end{example}

\begin{example}[Category $\Vect$ of vector spaces]
  The collection $\Vect$ of all real vector spaces (not necessarily
  finite dimensional) forms a category.  Its objects are vector spaces
  and its morphisms are linear maps.  The composition of morphisms is
  the ordinary composition of linear maps.
\end{example}

\begin{example}[Category $\Graph$ of directed graphs] The collection
  $\Graph$ of all directed graphs (Definition~\ref{def:directed
    graph}) forms a category.  Its objects are graphs and its
  morphisms are maps of graphs.
\end{example}

\begin{example}[Category $\Man$ of manifolds]
 The collection of all
  finite dimensional Hausdorff paracompact manifolds and smooth
  maps forms the category $\Man$ of manifolds.  
\end{example}

\begin{example}[Sets as discrete categories]\label{ex:discrete cat}
  Any set $X$ can be thought of as a category $\X$ with the collection
  of objects $\X_0 =X$ and the collection of arrows $\X_1
  =\{1_x\}_{x\in X}$.  The only pairs of arrows that can be composed
  are of the form $(1_x, 1_x)$ and we define their composite to be
  $1_x$. One refers to such categories as {\em discrete} categories.
\end{example}
\begin{definition}\label{def:subcat} 
  A {\em subcategory} $\A$ of a category $\B$ is a collection of some
  objects $\A_0$ and some arrows $\A_1$ of $\B$ such that:
\begin{itemize}
\item For each object $a\in \A_0$, the identity $1_a$ is in $\A_1$;
\item For each arrow $f\in \A_1$ its source and target $s(f), t(f)$
  are in $\A_0$;
\item for each pair $(f,g)\in \A_0 \times \A_0$ of composable arrows
  $a\stackrel{f}{\to}a'\stackrel{g}{\to}a''$ the composite $g\circ f$
is in $\A_1$ as well.
\end{itemize}
\end{definition}
\begin{remark}
Naturally a subcategory is a category in its own right.
\end{remark}

\begin{example}
  The collection $\Finset$ of all finite sets and all maps between
  them is a subcategory of $\Set$ hence a category.

  The collection $\FinGraph$ of finite directed graphs (that is graphs
  whose collections of nodes and edges are finite sets) and maps of
  graphs between them is a subcategory of $\Graph$.

  The collection $\FinVect$ of real finite dimensional vector spaces
  and linear maps is a subcategory of $\Vect$.
\end{example}

\begin{example}[The category $\Euc$ of Euclidean spaces]
  The collection of all finite dimensional real vector spaces is a
  collection of objects for two different categories:
\begin{enumerate}
\item It is the collection of objects of $\FinVect$.
\item  Since
  every finite dimensional vector space is canonically a Hausdorff
  second countable manifold, we also have the category $\Euc$
  (``Euclidean spaces'') with the same objects as $\FinVect$ but with
  morphisms defined to be all smooth maps. 
\end{enumerate}
The category $\FinVect$ can be thought of as a subcategory of the
category $\Euc$ of Euclidean spaces.  The category of Euclidean spaces
is naturally a subcategory of the category $\Man$ of manifolds.
\end{example}

\begin{example}[Groups are categories]\label{ex:groups are cats}
  In contrast with sets, which are categories with many objects and
  almost no morphisms, a group $G$ can be viewed as a category $\G$
  with one object as follows: the set of objects of $\G$ is a set with
  one element: $\G_0 = \{*\}$.  The set of arrows $\G_1$ is the set of
  elements of the group $G$: $\G_1 = G$.  The composition of arrows in
  $\G$ is multiplication in the group $G$.
\end{example}

\begin{example}[Disjoint union of groups as a category]
\label{ex:disjoint-union-gropus}
  Let $\{G_\alpha\}_{\alpha \in A}$ be a family of groups index by a
  set $A$. The disjoint union $\bigsqcup_{\alpha\in A} G_\alpha$ is not
  a group, since we cannot multiply elements of two different groups,
  but it can be thought of as a category $\A$.  Here are
  the details.

  The set of objects of the category $\A$ is the set $A$: $\A_0 = A$.
  The set of morphisms $\A_1$ is the disjoint union
  $\bigsqcup_{\alpha\in A} G_\alpha$ with $\Hom _A (\alpha,\alpha') =
  \emptyset$ if $\alpha\not= \alpha'$ and $\Hom_\A (\alpha,\alpha) =
  G_\alpha$.  The composition in $\Hom_\A (\alpha,\alpha)$ is the
  multiplication in $G_\alpha$.

  Elsewhere in the paper we  abuse the notation and write
  $\bigsqcup_{\alpha\in A} G_\alpha$ when we mean the category $\A$.
\end{example}

\begin{definition}[isomorphism]
  An arrow $f\in \Hom_\A(a,b)$ in a category $\A$ is an {\em
    isomorphism} if there is an arrow $g\in \Hom_A (b,a)$ with $g\circ
  f = 1_a$ and $f\circ g = 1_b$.  We think of $f$ and $g$ as inverses
  of each other and may write $g = f\inv$.  Clearly $g= f\inv$ is also
  an isomorphism.

  Two objects $a,b\in \A_0$ are {\em isomorphic} if there is an
  isomorphism $f\in \Hom_\A (a,b)$.  We will also say that $a$ is
  isomorphic to $b$.
\end{definition}

\begin{definition}[Groupoid] \label{def:groupoid} A {\em groupoid} is
  a category in which every arrow is an isomorphism.\footnote{Readers
    uneasy about the issues of size may further restrict the
    definition of a groupoid by requiring that its collection of
    objects is a set.}
\end{definition}

\begin{example}  Sets (thought of as discrete categories), groups
  (thought of as categories with just one object), and disjoint unions
  of groups \ref{ex:disjoint-union-gropus} are all groupoids.
\end{example}

\begin{definition}
A category $\X$ is {\em small} if its collections $\X_0$ of objects and $\X_1$ of morphisms are 
both sets.
\end{definition}
\begin{example}
  Any group thought of as a category is a small category.  Any set
  thought of as a discrete category is a small category.  However, the
  categories $\Set$, $\FinSet$, $\Vect$, $\Euc$, and $\Man$ are not
  small.
\end{example}

\begin{definition}[Opposite category]\label{def:opposite}
  Given a category $\A$, the {\em opposite category} $\op\A$ has the
  same objects as $\A$ and the arrows are reversed. That is $(\op\A)_0
  = \A_0$ and for any two objects $a,b\in\A_0= (\op\A)_0$ the set of
  morphisms is defined by
\begin{equation*}
  \Hom_{\op\A}(a,b) = \Hom_\A(b,a),
\end{equation*}
The composition 
\[
{\op\circ}\colon \Hom_{\op\A}(b,c)\times\Hom_{\op\A}(a,b)
\rightarrow\Hom_{\op \A}(a,c)
\] 
in $\op\A$ is defined by 
\[
\Hom_{\op\A}(b,c)\times\Hom_{\op\A}(a,b) (= \Hom_A(c,b)\times \Hom_A
(b,a)) \ni (g,f) \mapsto g{\op\circ}f := f\circ g.
\]
\end{definition}

\begin{definition}[Slice category] \label{empt:slice-cat}
Given a subcategory $\A$ of a category $\B$ and an object $b\in \B_0$
we can form a new category, $\A/b$.  The objects of $\A/b$ are arrows
of $\B$ of the form $x\stackrel{\alpha}{\to}b$ with $x\in \A_0$.  A
morphism from $x\stackrel{\alpha}{\to}b$ to $y\stackrel{\beta}{\to}b$
is defined to be a commuting triangle 
\[
\xy 
(-8, 4)*+{x}="1";
(8,4)*+{y}="2"; 
(0,-4)*+{b} ="3"; 
{\ar@{->}_{\alpha} "1";"3"}; 
{\ar@{->}^{\beta} "2";"3"}; 
{\ar@{->}^{h} "1";"2"};
\endxy
\]
in $\B$ (cf. ~Mac Lane \cite[p.\ 45]{MacLane}).  The category
$\A/b$ is  called the {\em slice category} and the {\em comma
  category}.  In this paper we will often abbreviate the commuting
triangle above as $h\colon x\to y$ with the rest of the triangle
understood.
\end{definition}

\begin{remark}
  The two slice categories that are important for us in this paper are
  $\FinSet/C_0$ where $C_0$ is a set, which is not necessarily finite,
  and $\FinGraph/C$ where $C$ is a directed graph, again, not
  necessarily finite.
\end{remark}

\begin{definition}[Functor]\label{def:functor}
  A (covariant) {\em functor} $F\colon\A\to\B$ from a category $\A$ to
a category $\B$ is a map on the objects and arrows of $\A$ such that
every object $a\in \A_0$ is assigned an object $Fa\in\B_0$, every
arrow $f\in\Hom_\A(a,b)$ is assigned an arrow $Ff\in\Hom_\B(Fa,Fb)$,
and such that composition and identities are preserved, namely
\begin{equation*}
  F(f\circ g) = Ff\circ Fg,\quad F 1_a = 1_{Fa}.
\end{equation*}
A {\em contravariant} functor from $\A$ to $\B$
is a covariant functor  $G\colon\op\A\to\B$.  This amounts to:
\[
G (f\circ g) = G(g)\circ G(f)
\] 
for all composable pairs of arrows $f,g$ of $\A$.
\end{definition}

\begin{example}
  Given a category $\A$ there is the {\em identity functor} $1_\A\colon \A
  \to \A$ which is the identity on objects and arrows.
\end{example}
\begin{example}
  If $\A$ is a subcategory of $\B$ then the natural inclusion $i\colon \A\to
  \B$ is a functor.
\end{example}

\begin{remark}
Since functors are maps, functors can be composed.
\end{remark}

\begin{remark}\label{rmrk:functors from discrete}
  A functor $F\colon \X \to \A$ from a {\em discrete} category $\X$ to a
  category $\A$ is a map $F$ from the set $X$ of objects of $\X$ to
  the collection of objects of $\A$: $F\colon X\to \A_0$.
\end{remark}

\begin{definition}\label{def:full}
A functor $F\colon \A\to \B$ is
\begin{enumerate}
\item {\em full} if $F\colon\Hom_\A(a,a')\to\Hom_\B(Fa,Fa')$
is surjective for all pairs of objects $a,a'\in \A_0$;
\item {\em faithful} if $F\colon\Hom_\A(a,a')\to\Hom_\B(Fa,Fa')$
is injective for all pairs of objects $a,a'\in \A_0$
\item {\em fully faithful} if $F\colon\Hom_\A(a,a')\to\Hom_\B(Fa,Fa')$
  is a bijection for all pairs of objects $a,a'\in \A_0$;
\item {\em essentially surjective} if for any object $b\in \B_0$ there
  is an object $a\in \A_0$ and an isomorphism $f\in \Hom_\B (F(a),
  b)$.  That is, for any object $b$ of $\B$ there is an object $a$ of
  $\A$ so that $b$ and $F(a)$ are isomorphic.
\end{enumerate}
\end{definition}

\begin{example}
  The inclusion $i\colon  \Euc \hookrightarrow \Man$ is fully faithful.  It
  is not essentially surjective since not every manifold is
  diffeomorphic to a vector space.

  The ``inclusion'' $\FinVect \hookrightarrow \Euc$ is faithful but
  not full since not every smooth map is linear.  It is essentially
  surjective since $\FinVect$ and $\Euc$ have the ``same'' objects.
\end{example}

\begin{definition}\label{def:full subcat}
  A subcategory $\A$ of a category $\B$ is {\em full} if the inclusion
  functor $i\colon \A\hookrightarrow \B$ is full (it is faithful by
  definition of what it means to be a subcategory).  That is, for any
  two objects $a,a'$ of $\A$,
\[
\Hom_\A (a,a') = \Hom_\B (a, a').
\]
\end{definition}

\begin{definition}[Natural Transformation]
  Let $F,G\colon\A\to\B$ be a pair of functors. A {\em natural
    transformation} $\tau\colon F\Rightarrow G$ is a family of
  $\{\tau_a\colon Fa\to Ga\}_{a\in\A_0}$ of morphisms in $\B$, one for
  each object $a$ of $\A$, such that, for any $f\in\Hom_\A(a,a')$, the
  following diagram commutes:
\begin{equation*}
  \xy
(-10, 10)*+{Fa} ="1"; 
(10, 10)*+{Fb} ="2"; 
(-10,-5)*+{Ga}="3";
(10,-5)*+{Gb} ="4";
{\ar@{->}^{Ff} "1";"2"};
{\ar@{->}^{\tau_b} "2";"4"};
{\ar@{->}_{\tau_a} "1";"3"};
{\ar@{->}^{Fg} "3";"4"};
\endxy
\end{equation*}
If each $\tau_a$ is an isomorphism, we say that $\tau$ is a {\em natural
isomorphism} (an older term is {\em natural equivalence}).
\end{definition}

\begin{definition}\label{def:comp-transf}
Natural transformations and functors can be composed in several distinct ways.
We will need two.

If $f,g,h\colon \Y\to \X$ are functors and $\alpha\colon f\Rightarrow g$, $\beta\colon 
g\Rightarrow h$ are natural transformations, then their {\em vertical
  composition} $\beta\circ_V \alpha\colon  f\Rightarrow h$ is defined by
\[
(\beta\circ_V \alpha)_y := \beta_y \circ \alpha _y
\]
for all $y\in \Y_0$.  The composition $\circ$ on the right is the
composition of arrows in $\X$.

If $k\colon \Z\to \Y$, $f,g\colon \Y\to \X$ are functors and $\alpha\colon f\Rightarrow
g$ a natural transformation, we define the {\em horizontal}
composition $\alpha \circ _H k\colon  fk\Rightarrow gk$ by
\[
(\alpha \circ _H k)_z:= \alpha_{k(z)}
\]
for all $z\in \Z_0$.
\end{definition}

\begin{definition}[Equivalence of
categories]
An {\em equivalence of categories} consists of a pair of functors
\[
F\colon \A\to \B, \quad E\colon \B\to \A
\]
and a pair of natural isomorphisms
\[
\alpha\colon  1_\A \Rightarrow E\circ F \quad \beta\colon  1_\B \Rightarrow F\circ E.
\]
\end{definition}
In this situation the functor $F$ is called {\em the pseudo-inverse}
or the {\em homotopy inverse} of $E$.  The categories $\A$ and $\B$
are then said to be {\em equivalent}.

\begin{proposition}\label{prop:eq-of-cats}
  A functor $F\colon \A \to \B$ is (part of) an equivalence of categories if
  and only if it is fully faithful and essentially surjective.
\end{proposition}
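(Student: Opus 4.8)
The plan is to prove the biconditional by treating the two directions separately. The forward implication is pure diagram chasing with the given data; the converse requires constructing a pseudo-inverse by hand, and this is where the actual work lies.

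First, suppose $F$ is part of an equivalence, with pseudo-inverse $E\colon\B\to\A$ and natural isomorphisms $\alpha\colon 1_\A\Rightarrow E\circ F$ and $\beta\colon 1_\B\Rightarrow F\circ E$. Essential surjectivity is then immediate: for each object $b$ of $\B$, the component $\beta_b\colon b\to FE(b)$ is an isomorphism, so $b$ is isomorphic to $F(E(b))$. For faithfulness I would observe that if $F(f)=F(g)$ for $f,g\in\Hom_\A(a,a')$, then applying $E$ and invoking the naturality square of $\alpha$ writes $f=\alpha_{a'}\inv\circ EF(f)\circ\alpha_a$, an expression depending only on $F(f)$; hence $f=g$. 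The symmetric argument using $\beta$ shows that $E$ is faithful as well. For fullness, given $h\in\Hom_\B(F(a),F(a'))$, I would set $g:=\alpha_{a'}\inv\circ E(h)\circ\alpha_a$ and use naturality of $\alpha$ to obtain $EF(g)=E(h)$, so that $F(g)=h$ because $E$ is faithful.

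For the converse, assume $F$ is fully faithful and essentially surjective. I would build $E$ explicitly: using essential surjectivity, for each object $b$ of $\B$ choose an object $E(b)$ of $\A$ together with an isomorphism $\epsilon_b\colon F(E(b))\to b$ in $\B$ (this step invokes a choice principle). For a morphism $g\colon b\to b'$ the composite $\epsilon_{b'}\inv\circ g\circ\epsilon_b$ lies in $\Hom_\B(FE(b),FE(b'))$, so full faithfulness of $F$ produces a unique $E(g)\colon E(b)\to E(b')$ with $F(E(g))=\epsilon_{b'}\inv\circ g\circ\epsilon_b$; preservation of identities and composition then follows by applying $F$ to both sides and cancelling it using faithfulness. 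By construction $\epsilon=\{\epsilon_b\}$ is a natural isomorphism $FE\Rightarrow 1_\B$, and its componentwise inverse is the required $\beta$. To produce $\alpha\colon 1_\A\Rightarrow EF$, for each object $a$ of $\A$ I would transport the isomorphism $\epsilon_{F(a)}\inv\colon F(a)\to FEF(a)$ back along the bijection $\Hom_\A(a,EF(a))\to\Hom_\B(F(a),FEF(a))$ to get an arrow $\alpha_a$; since a fully faithful functor reflects isomorphisms, $\alpha_a$ is an isomorphism, and its naturality is checked by applying $F$, using naturality of $\epsilon$ at $F(f)$, and again cancelling $F$ by faithfulness. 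This exhibits $(F,E,\alpha,\beta)$ as an equivalence.

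The genuinely delicate point is the converse direction: full faithfulness of $F$ is used three times --- to define $E$ on morphisms, to define the unit $\alpha$, and to reduce every coherence check to an equation in $\B$ --- and the whole construction rests on an arbitrary choice of the objects $E(b)$ and isomorphisms $\epsilon_b$. The remaining verifications are routine manipulations of naturality squares.
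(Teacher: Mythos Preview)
Your proof is correct and is precisely the standard argument for this classical result. The paper does not actually give a proof here but simply refers the reader to Awodey's textbook; your write-up supplies exactly the argument one finds there, so there is nothing further to compare.
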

\begin{proof}
See \cite[Proposition~7.25]{Awodey}
\end{proof}

\begin{example}[Any groupoid is equivalent to a disjoint union of groups]
\label{ex:groupoid skeleton}
\footnote{This is a special case of the fact that any category is
  equivalent to its skeleton \cite[p.\ 93]{MacLane} } \quad Suppose a
category $\B$ is a groupoid.  Then either two objects $b,b'$ of $\B$
are isomorphic or $\Hom_\B (b,b')$ is empty.  Thus ``being
isomorphic'' is an equivalence relation on the collection of objects
$\B_0$ of $\B$.  Pick one representative for each equivalence class of
objects of $\B$ and call this collection $A$.  For every $a\in A$,
$\Hom_\B (a, a)$ is a group; call it $G_a$.  We have a natural
inclusion functor $i\colon  \bigsqcup _{a\in A} G_a \to \B$.  The functor
$i$ is fully faithful by construction.  It is essentially surjective
since any object $b\in \B_0$ is isomorphic to some object $a\in
(\bigsqcup _{a\in A} G_a)_0 = A$ by construction of $A$.  By
Proposition~\ref{prop:eq-of-cats} above, $i$ is an equivalence of
categories.

We will refer to the disjoint union $\bigsqcup _{a\in A} G_a$ as a
{\em skeleton} of the groupoid $\B$.
\end{example}

\subsection{Limits}
\begin{definition}[Limit]\label{def:limit} A {\em limit} $\lim  F\equiv
  \lim (F\colon \A\to \B)$ of a functor $F\colon \A\to \B$ (if it exists!) is an
  object $\lim F$ of $\B$ {\em together with a collection of arrows
  }$\{ p_a\colon  \lim F \to F(a)\}_{a\in \A_0}$ (called projections) so
  that
\begin{enumerate}
\item for any arrow
    $a\stackrel{f}{\to}a'$ in $\A$ the diagram
\[
\xy
(0, 10)*+{ \lim F} ="1"; 
(-20,-6)*+{F(a)}="2";
(20,-6)*+{F(a')} ="3";
{\ar@{->}_{ p_a} "1";"2"};
{\ar@{->}^{p_{a'}} "1";"3"};
{\ar@{->}^{F(f)} "2";"3"};
\endxy
\]
commutes;
\item Given an object $b$ of $\B$ and a family of arrows $ \{ \xi_a\colon 
  b\to F(a)\}_{a\in \A_0}$ so that $$\xy (0, 7)*+{ b} ="1";
  (-14,-6)*+{F(a) }="2"; (14,-6)*+{F(a')} ="3"; {\ar@{->}_{ \xi_a}
    "1";"2"}; {\ar@{->}^{\xi _{a'}} "1";"3"}; {\ar@{->}^{F(f)}
    "2";"3"};
  \endxy$$ commutes for any arrow $f\in\Hom_\A(a,a')$, there is a
  unique arrow $\xi\colon b\to \lim F$ making the diagram
\[
\xy
(-18, 10)*+{ \lim F} ="1"; 
(18, 10)*+{ b} ="1'"; 
(-18,-6)*+{F(a)}="2";
(18,-6)*+{F(a')} ="3";
{\ar@{->}_{ p_a} "1";"2"};
{\ar@{->}_(.3){p_{a'}} "1";"3"};
{\ar@{->}^(.3){ \xi_a} "1'";"2"};
{\ar@{->}^{\xi _{a'}} "1'";"3"};
{\ar@{-->}_{\xi} "1'";"1"};
{\ar@{->}_{F(f)} "2";"3"};
\endxy
\]
commute.
\end{enumerate}
\end{definition}

\begin{remark}[``Uniqueness'' of limits]\label{rmrk:uniqueness-of-lim}
 \quad It is not hard to show
  using property (2) of the definition of a limit, that while limits
  are not unique, any two limits of the same functor $F$ are
  isomorphic by way of a unique isomorphism.  One says that limits are
  unique up to a unique isomorphism.

  We will occasionally gloss over this fine point
  and talk about ``the limit'' of a functor $F$.
\end{remark}

\begin{definition}\label{def:complete-cat}
A category $\D$ is {\em complete} if for any {\em small} category $\X$ and  any functor $f\colon \X
\to \D$ the limit $\lim (f\colon \X\to \D)$ exists in $\D$.
\end{definition}
It is known that the categories $\Set$ of sets and the category
$\Vect$ of vector spaces are complete \cite{MacLane}, \cite{Awodey}.
The category of manifolds $\Man$ is not complete.

A categorical product is a special kind of a limit.  We will first
define categorical products, then give examples and then explain why
they (products)  are limits.

\begin{definition}[Categorical product]\label{def:product}
  A {\em categorical product} of a family $\{F_x\}_{x\in X}$ of
  objects of a category $\B$ indexed by the elements of a set $X$ is
  an object $\prod_{x\in X}F_x$ of $\B$ together with a set of arrows
  $\{p_x\colon  \prod_{y\in X}F_y\to F_x\}_{x\in X}$ of $\B$ enjoying the
  following universal property: for any object $b$ of $\B$ and any set
  of maps $\{\xi_x\colon b\to F_x\}$ there is a unique map $\xi\colon  b\to
  \prod_{x\in X}F_x$ making the diagram
\[
\xy
(-18, 10)*+{ \prod_{y\in X}F_y} ="1"; 
(2, 10)*+{ b} ="1'"; 
(-18,-6)*+{F_x}="2";
{\ar@{->}_{ p_x} "1";"2"};
{\ar@{->}^{ \xi_x} "1'";"2"};
{\ar@{-->}_(.3){\exists !\xi} "1'";"1"};
\endxy
\]
commute.
\end{definition}
\begin{remark}\label{rmkr:unique-prod}
  It is easy to show using the universal property of categorical
  products that categorical products (if they exist) are unique up to
  a unique isomorphisms. Again, we will gloss over non-uniqueness of
  products and talk about ``the product'' of a family $\{F_x\}_{x\in
    X}$.  See also \ref{empt:products-are-lim} below and
  Remark~\ref{rmrk:uniqueness-of-lim} above.
\end{remark}

\begin{example} \label{ex:A35}
  If $\B =\Set$ , the category of sets,and $\{F_1, F_2\}$ is a family
  of two sets then both Cartesian products $F_1\times F_2$ and
  $F_2\times F_1$ (together with the projection onto the factors
  $F_1$, $F_2$) are categorical products of $\{F_1, F_2\}$.  The
  unique isomorphism $F_1\times F_2 \to F_2\times F_1$ swaps the
  elements of the ordered pairs.

  If $\B$ is the category $\Vect$ of vector spaces over the reals and
  then the categorical product of $\{F_x\}_{x\in X}$ is the product
  vector space $\prod _{x\in X} F_x$.  If $X$ is finite, it is also
  the direct sum $\bigoplus _{x\in X} F_x$.
  If $X$ is empty then the zero vector space $\{0\}$ has the right
  universal property and is considered the (categorical) product of
  $\{F_x\}_{x\in \emptyset}$:
\[
\prod_{x\in \emptyset} F_x =\{0\}.
\]

If $X$ is infinite and $\B = \FinVect$ then the categorical product
$\prod _{x\in X} F_x$ does not exist in $\FinVect$, since the product
of infinitely many vector spaces it is not a finite dimensional vector
space (the Cartesian product $\prod_{x\in \emptyset} F_x$ is a vector
space, but it is not finite dimensional).

If $\B$ is the category $\Euc$ of Euclidean spaces and smooth maps and
$X$ is a finite set then the Cartesian product $\prod_{x\in X} F_x$ of
Euclidean spaces is a Euclidean space. Hence finite categorical
products exist in $\Euc$.  If $X$ is infinite then the categorical
product $\prod_{x\in X} F_x$ does not exist in $\Euc$.  If $X$ is
empty then the $\prod_{x\in \emptyset} F_x$ is the one point Euclidean
space $\{0\}$.

If $\B$ is the category $\Man$ of finite dimensional manifolds and
smooth maps and $X$ is a finite set then the categorical product
$\prod_{x\in X} F_x$ is the Cartesian product of manifolds.  Just as for the
categories of finite dimensional vector spaces and Euclidean spaces,
if the set $X$ is infinite then the categorical product $\prod_{x\in X} F_x$ does
not exist in $\Man$.
\end{example}

\begin{empt}[Categorical products are limits]\label{empt:products-are-lim}
To see that categorical products are limits think of a family of
objects $\{F_x\}_{x\in X}$ in a category $\B$ as a functor $F\colon \X \to
\B$, $F(x) = F_x$, from the corresponding discrete category $\X$ (q.v.\
Remark~\ref{rmrk:functors from discrete}).  Then $\lim (F\colon \X \to \B)$ has
exactly the universal properties of the categorical product
$\prod_{x\in X}F_x$.
\end{empt}

\begin{remark}[The set of maps into a product is a
  product]\label{rmrk:hom-into-product}
  Suppose we have a family of objects $\{F_x\}_{x\in X}$ in a category
  $\B$ and its product $\prod_{x\in X}F_x$ exists in $\B$.  Let
  $\{p_x\colon \prod_{x'\in X}F_{x'}\to F_x\}$ denote the family of the
  canonical projections.  For any morphism $f\colon Y\to \prod_{x\in X}F_x$
  in $\B$ and any $x\in X$ we have a morphism $p_x\circ f\colon Y\to F_x$.
  This defines a map
\[
\fp_x\colon  \Hom_\B (Y, \prod_{x\in X}F_x) \to \Hom_\B (Y, F_x), 
\quad \fp_x (f) := p_x\circ f \equiv (p_x)_* f.
\]
On the other hand, since arbitrary products exist in the category
$\Set$ of sets, we have the product $\prod_{x\in X}\Hom_B (Y,F_x)$.
Denote its family of the canonical projections by
$\{\pi_x\colon \prod_{x'\in X}\Hom_B (Y,F_{x'}) \to \Hom_\B(Y, F_x)\}$.  By
the universal property of the product $\prod_{x\in X}\Hom_B (Y,F_x)$
the maps $\{\fp_x\}_{x\in X}$ define a unique map
\[
\fp\colon  \Hom _B (Y, \prod_{x\in X}F_x) \to  \prod_{x\in X} \Hom _B (Y, F_x). 
\]
Informally speaking the map $\fp$ sends a map $f$ to the tuple of
maps $(p_x \circ f)_{x\in X} \in \prod_{x\in X} \Hom _B (Y, F_x)$.
{\em By the universal property of the product $\{p_x\colon \prod_{x'\in
    X}F_{x'}\to F_x\}$ the map $\fp$ is a bijection.}

There is an even better way to think of about the universal property
of  the family $\{\fp_x\colon 
\Hom_\B (Y, \prod_{x'\in X}F_x') \to \Hom_\B (Y, F_x)\}_{x\in X}$.  Namely, 
  $\Hom_\B (Y, \prod_{x\in X}F_x)$ together with the family
  $\{\fp_x\}_{x\in X}$ {\em is} a product of the family of sets $\{ \Hom _B
(Y, F_x)\}_{x\in X}$:
\[
\Hom_\B (Y, \prod_{x\in X}F_x) =  \prod_{x\in X} \Hom _B (Y, F_x).
\]
  Compare with \ref{def:product}.
\end{remark}

\begin{empt}[Invariants are limits]
  Let $\rho\colon G\to GL(V)$ be a representation of a group $G$.  Recall
  that the space of invariants, the space of $G$-fixed vectors, is
\[
V^G := \{ v\in V\mid \rho(g) v = v \textrm{ for all }g\in G\}.
\]
We now interpret $V^G$ together with its inclusion $V^G\hookrightarrow
V$ as a limit of functor.  View the group $G$ as a category $\G$ with
one object (see Example~\ref{ex:groups are cats} above).  Then the
representation $\rho$ defines a functor $\rho\colon \G \to \Vect$.  The {\em
  functor} $\rho$ sends the unique object $*$ of $\G$ to $V$ and an
arrow $g\in \G_1 = G$ to $\rho(g) \in \Hom_\Vect (V, V) = \Hom _\Vect
(\rho(*), \rho(*)).$ By definition of a limit, the limit of $\rho\colon \G
\to \Vect$ is a vector space $U$ together with a map $p\colon U\to \rho(*) =
V$ with the following universal property: any linear map $T\colon W\to V$
with 
\begin{equation}\label{eq:A26}
\rho(g) \circ T = T
\end{equation}
 for all $g\in G$  should factor
through $p\colon U\to V$.  Now \eqref{eq:A26} implies that for any $w\in W$
\[
\rho(g) T(w) = T(w) \textrm{ for all } g\in G.
\] 
Thus $T$ always factors through the inclusion $V^G\hookrightarrow V$
and therefore $\lim (\rho\colon \G\to \Vect) = V^G$.
\end{empt}

\begin{definition}
  Let $\A$ be a groupoid whose collection of objects $\A_0$ is a set.
  A {\em representation} $F$ of $\A$ is a functor from $\A$ to the
  category of vector spaces $\Vect$:
\[
F\colon  \A \to \Vect
\]
\end{definition}

\begin{definition}\label{def:invariants}
The {\em space of
    invariants} of a representation $F\colon \A \to \Vect$ of a groupoid
  $\A$ is the limit $\lim F$.
\end{definition}

\begin{remark}\label{rmrk:A.2.13}
  The space of invariants of $F\colon \A\to \Vect$ has the following
  concrete description: consider the product $\prod_{a\in \A_0}F(a)$
  together with its canonical projections $p_b\colon  \prod_{a\in \A_0}F(a)
  \to F(b)$.  Then
\[
U:= \{ v\in  \prod_{a\in \A_0}F(a) \mid F(\sigma) p_a (v) = p_b (v)
\textrm{ for all arrows } a\stackrel{\sigma}{\to}b\in \A_1\} 
\]
together with the projections  $\varpi_a = p_a |_U\colon  U \to
F(a)$.  To check this assertion just check the universal properties of  
the family $\{\varpi_a\colon  U \to F(a)\}_{a\in \A_0}$. 

If the groupoid $\A$ is a disjoint union of groups $\bigsqcup_{a\in A}
G_a$ then a representation $F\colon \A\to \Vect$ is a family of
representations $\{\rho_a\colon  G_a \to V_a\}_{a\in A}$, where $\rho_a =
F|_{\G_a}$ and $V_a = F(a)$.  It is not hard to check that the product
$\prod_{a\in A} V_a ^{G_a}$ together with the canonical maps
$\{\prod_{a\in A} V_a ^{G_a} \to V_a\}_{a\in A}$ has the universal
property of the limit $\lim F$.  Thus in this case
\[
\lim(F\colon \bigsqcup_a G_a \to \Vect) = \prod_{a\in A} V_a ^{G_a},
\]
i.e., it is the product of the invariants of constituent representations.
\end{remark}

\begin{remark}
  Since any groupoid $\A$ is equivalent to disjoint union of groups
  (Example~\ref{ex:groupoid skeleton}), it follows from Theorem~\ref{thm:iso}
   that the space of invariants of a
  representation of a groupoid is always the product of invariants of
  representations of groups.   C.f.\ \ref{thm:inv-of-groups} and its proof.
\end{remark}

We end the appendix by restating the definition of a limit of a
functor in terms of cones and terminal objects.  We will use the
restatement in Section~\ref{sec:2slice}.

\begin{definition}[Cones]  Let $F\colon \A \to \B$ be a functor. A {\em cone} over $F$ is an object 
$b$ of $B$ together with a collection of arrows 
$\{\xi_a\colon   b\to F(a)\}_{a\in \A_0}$ so that 
\[
\xy (0, 7)*+{ b} ="1";
  (-14,-6)*+{F(a) }="2"; 
(14,-6)*+{F(a')} ="3"; 
{\ar@{->}_{ \xi_a}    "1";"2"}; 
{\ar@{->}^{\xi _{a'}} "1";"3"}; 
{\ar@{->}^{F(f)}    "2";"3"};
  \endxy  
\]
 commutes for any arrow $f\in\Hom_\A(a,a')$. 

  A {\em morphism $\upsilon\colon  (b,\{\xi_a\})\to (b', \{\xi'_a\})$ of cones
  over $F$} is an arrow $\upsilon\colon  b\to b'$ in $\B$ making the triangle
\[ \xy
(-10, 7)*+{b} ="1";
(10, 7)*+{b'} ="2";
(0, -5)*+{F(a)} ="3";
{\ar@{->}^{ \upsilon}    "1";"2"}; 
{\ar@{->}_{ \xi_a}    "1";"3"}; 
{\ar@{->}^{ \xi'_a}    "2";"3"}; 
\endxy
\]
commute.  

We have an evident category $\Cone (F)$ of cones over a functor $F$.
\end{definition}

\begin{empt}
  Note that if $\upsilon\colon  (b,\{\xi_a\})\to (b', \{\xi'_a\})$ is a
  morphism of cones over $F$ then the diagram
\[
\xy
(-18, 10)*+{ b} ="1"; 
(18, 10)*+{ b} ="1'"; 
(-18,-6)*+{F(a)}="2";
(18,-6)*+{F(a')} ="3";
{\ar@{->}_{ \xi_a} "1";"2"};
{\ar@{->}_(.3){\xi_{a'}} "1";"3"};
{\ar@{->}^(.3){ \xi'_a} "1'";"2"};
{\ar@{->}^{\xi' _{a'}} "1'";"3"};
{\ar@{->}^{\upsilon} "1";"1'"};
{\ar@{->}_{F(f)} "2";"3"};
\endxy
\]
automatically commutes.
\end{empt}

\begin{definition}[Terminal object] An object $\tau$ of a category $\sfC$
  is {\em terminal} if for any object $c$ of $\sfC$ there is a unique
  arrow $x\colon c\to \tau$.
\end{definition}

\begin{example} Any one point set $\{*\}$ is terminal in the category
  $\Set$ of sets: given a set $X$ there is a unique function $f\colon X\to
  \{*\}$ that sends every element of $X$ to $*$.
\end{example}

\begin{definition}[Limit, restated]
  A limit $\lim F$ of a functor $F\colon \A\to \B$ is a terminal object in
  the category $\Cone(F)$ of cones over $F$.
\end{definition}

\begin{remark}
  Terminal objects need not exist in a given category.  But any two
  terminal objects $\tau$, $\tau'$ of a category $\sfC$ are necessarily
  isomorphic. That is, terminal objects are unique up to a unique
  isomorphism. We leave a proof of this fact as an easy and standard
  exercise.  Consequently limits are unique up to a unique
  isomorphism (compare \ref{rmrk:uniqueness-of-lim}).
\end{remark}

\end{document}